\def\grad{\operatorname{grad}}
\def\XIx{{h}}
\newtheorem{theorem}{Theorem}[section]
\newtheorem{definition}[theorem]{Definition}
\newtheorem{lemma}[theorem]{Lemma}
\newtheorem{proposition}[theorem]{Proposition}
\newtheorem{remark}[theorem]{Remark}
\newtheorem{corollary}[theorem]{Corollary}
\def\wedgeo{\wedge}
\def\mapright#1{\smash{\mathop{\longrightarrow}\limits\sp{#1}}}
\begin{document}
\title{Curvature Properties of  Weyl Geometries}
\author{Peter  Gilkey, Stana Nik\v cevi\'c, and Udo Simon}
\address{PG: Mathematics Department, \; University of Oregon, \;\;
  Eugene \; OR 97403 \; USA\\
  E-mail: gilkey@uoregon.edu}
\address{SN: Mathematical Institute, Sanu,
Knez Mihailova 36, p.p. 367,
11001 Belgrade,
Serbia.\\ Email: stanan@mi.sanu.ac.rs}
\address{US: Institut f\"ur Mathematik, Technische Universit\"at
Berlin 
\newline  Strasse des 17. Juni 135, D-10623 Berlin, Germany\\ Email: simon@math.tu-berlin.de}
\begin{abstract}{We examine relations between geometry and the associated\\
curvature decompositions in Weyl  Geometry.\\MSC 2002: 53B05, 15A72, 53A15,
53B10, 53C07, 53C25}\end{abstract}
\maketitle
\centerline{\bf Dedicated to Heinrich Wefelscheid}\centerline{\bf on the occasion of his 70th birthday}

\section{\bf Introduction}\label{sect-1}

\subsection{Weyl geometry} Let $N$ be a smooth manifold of dimension $n\ge3$. Let $\nabla$ be a torsion free
connection on the tangent bundle $TN$ of $N$ and let $g$ be a semi-Riemannian metric on $N$. Then the triple
$\mathcal{W}:=(N,g,\nabla)$ is said to be a {\it Weyl manifold} if there exists a smooth $1$-form $\phi\in C^\infty(T^*N)$ so
that:
\begin{equation}\label{eqn-1.a}
\nabla g=-2\phi\otimes g\,.
\end{equation}
Weyl geometry \cite {W22} is linked with conformal geometry. If $f\in C^\infty(N)$,
let
$g_1:=e^{2f}g$ be a conformally equivalent metric. If
$\mathcal{W}=(N,g,\nabla)$ is a Weyl manifold, then
the triple $(N, g_1,\nabla)$ is again a Weyl manifold where the associated
$1$-form is given by taking $\phi_1:=\phi - df$. The transformation of the pair
$(g,\phi)\rightarrow(g_1,\phi_1)$ is called a {\it gauge transformation}. Properties  of the Weyl geometry that
are invariant under gauge transformations are called {\it gauge invariants}.

Let $\nabla^{g}$ be the Levi-Civita connection of $g$. There exists a
conformally equivalent metric
$g_1$ locally so that $\nabla=\nabla^{g_1}$ if and only if $d\phi=0$; if $d\phi=0$, such a class exists
globally if and only if the associated de Rham cohomology class $[\phi]$ vanishes.

\subsection{Affine and Riemannian geometry} We say that the pair
$\mathcal{A}:=(N,\nabla)$ is an {\it affine manifold} if
$\nabla$ is a torsion free connection on $TN$. Similarly, we say that the pair $\mathcal{N}:=(N,g)$ is a {\it
semi-Riemannian manifold} if $g$ is a semi-Riemannian metric on $N$. Weyl geometry lies
between affine geometry and semi-Riemannian geometry. Every Weyl manifold gives rise both to  an underlying
affine manifold $(N,\nabla)$ and to an underlying semi-Riemannian manifold $(N,g)$; Equation
(\ref{eqn-1.a}) provides the link between these two structures.
Since the Levi-Civita connection $\nabla^{g}$ is torsion free and since $\nabla^g g=0$, the triple $(N,g,\nabla^{g})$ is a Weyl
manifold. There are, however, examples with $d\phi\ne0$, so Weyl geometry is more general than
semi-Riemannian geometry or even than conformal semi-Riemannian geometry. 

\subsection{Curvature} The {\it curvature operator} $\mathcal{R}$ of a torsion free
connection $\nabla$ is the element of
$\otimes^2T^*N\otimes\operatorname{End}(TN)$ which is defined by:
$$\mathcal{R}(x,y)z := (\nabla_x\nabla_y-\nabla_y\nabla_x-\nabla_{[x,y]})z\,.$$
The $g$-associated {\it curvature tensor} is given by using the metric to lower an index:
$$R(x,y,z,w):=g(\mathcal{R}(x,y)z,w)\,.$$
We have the following identities: 
\begin{eqnarray}
&&R(x,y,z,w)+R(y,x,z,w)=0,\quad\text{and}\label{eqn-1.b}\\
&&R(x,y,z,w)+R(y,z,x,w)+R(z,x,y,w)=0\label{eqn-1.c}\,.
\end{eqnarray}
The relation of Equation (\ref{eqn-1.c}) is called the {\it Bianchi identity}. 
The {\it Ricci tensor}
$\operatorname{Ric} := \operatorname{Ric}(R)  := \operatorname{Ric}(\mathcal{R})$ is defined by setting:
$$
\operatorname{Ric}(x,y):=\operatorname{Tr}\{z\rightarrow (\mathcal{R}(z,x)y\} \,.
$$
The tensor $\operatorname{Ric}(\mathcal{R})$ does not depend on the metric $g$ and is a gauge invariant. Let $g_{ij}:=g(e_i,e_j)$
give the components of the metric tensor $g$ relative to a local frame $\{e_i\}$ for $TN$. Let $g^{ij}$ be the components of the
inverse matrix
$g^{-1}$  relative to the dual frame $\{e^i\}$ on $T^*N$.  We adopt the {\it Einstein convention} and sum over repeated indices.
We then express:
$$\operatorname{Ric}(x,y) =g^{ij}R(e_i,x,y,e_j)\,.$$
We contract again to define the {\it scalar curvature} of $R$ with respect to $g$
by setting:
$$\tau_g:=\operatorname{Tr}_g\operatorname{Ric}:=g^{ij}\operatorname{Ric}(e_i,e_j)\,.$$
We can define another Ricci-type tensor $\operatorname{Ric}^\star:= \operatorname{Ric}^\star(R)$  by
setting: 
$$
\operatorname{Ric}^\star(x,y):=g^{ij}R(x,e_i,e_j,y).
$$
We let $R_{ij}$ and $R^\star_{ij}$ be the components of these two tensors:
$$R_{ij} := \operatorname{Ric}(e_i,e_j)\quad\text{and}\quad R^\star_{ij} := \operatorname{Ric}^\star(e_i,e_j)\,.$$

We may decompose any (0,2)-tensor $\theta$ in the form $\theta=S\theta+\Lambda\theta$ where $S\theta$ and $\Lambda\theta$ are
the symmetrization and  the anti-symmetrization, respectively, of $\theta$. We have the following result (see, for example, the
discussion in
\cite{GNU10,W22}):
\begin{theorem}\label{thm-1.1}
Let $(N,g,\nabla)$ be  a Weyl manifold. Let $R=R(\nabla)$. Then we have
\begin{eqnarray}
&&R(x,y,z,w)+R(x,y,w,z)=\textstyle\frac2n\{\operatorname{Ric}(y,x) - \operatorname{Ric}(x,y)\}g(z,w) \nonumber\\
&&\qquad= - \tfrac4n\, (\Lambda\operatorname{Ric})(x,y)\,g(z,w),\label{eqn-1.e}\\
&&S\operatorname{Ric}^\star = S\operatorname{Ric},\quad   \Lambda
\operatorname{Ric}^\star=\textstyle\frac{n-4}n\Lambda \operatorname{Ric},\quad\text{and}\quad
\tau_g =\operatorname{Tr}_g\operatorname{Ric}^\star\,.
\end{eqnarray}
\end{theorem}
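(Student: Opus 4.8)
The plan is to reduce everything to a single identity measuring the failure of $R$ to be skew in its last two arguments, and then to extract the remaining claims by successive contractions against $g^{-1}$.

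First I would record the general Ricci-type identity valid for the covariant Hessian of any $(0,2)$-tensor $T$ under a torsion free connection. Writing $\nabla^2_{x,y}T := \nabla_x\nabla_y T - \nabla_{\nabla_x y}T$, the curvature acts on $T$ as a derivation, so that
$$(\nabla^2_{x,y}T - \nabla^2_{y,x}T)(z,w) = -T(\mathcal{R}(x,y)z,w) - T(z,\mathcal{R}(x,y)w)\,.$$
Applying this to $T=g$ and recalling the definition $R(x,y,z,w)=g(\mathcal{R}(x,y)z,w)$ gives at once
$$R(x,y,z,w)+R(x,y,w,z) = -(\nabla^2_{x,y}g - \nabla^2_{y,x}g)(z,w)\,,$$
so the entire left-hand side of (\ref{eqn-1.e}) is controlled by the second covariant derivative of the metric.

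Next I would compute $\nabla^2 g$ directly from the defining relation (\ref{eqn-1.a}), $\nabla g = -2\phi\otimes g$. Differentiating once more and re-substituting (\ref{eqn-1.a}) to eliminate the first derivatives of $g$, the contributions quadratic in $\phi$ turn out to be symmetric in $(x,y)$ and therefore cancel in the alternation $\nabla^2_{x,y}g - \nabla^2_{y,x}g$; what survives is the skew part of $\nabla\phi$, which is $d\phi$ because $\nabla$ is torsion free. This yields $\nabla^2_{x,y}g - \nabla^2_{y,x}g = -2\,d\phi(x,y)\,g$, and hence $R(x,y,z,w)+R(x,y,w,z) = 2\,d\phi(x,y)\,g(z,w)$. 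To identify $d\phi$ with the skew part of $\operatorname{Ric}$, I would contract this relation in $(z,w)$ against $g^{ij}$: the two terms on the left coincide by symmetry of $g^{ij}$ and give $g^{ij}R(x,y,e_i,e_j)=n\,d\phi(x,y)$. Contracting the Bianchi identity (\ref{eqn-1.c}) over $(z,w)=(e_i,e_j)$ against $g^{ij}$ and using (\ref{eqn-1.b}) rewrites $g^{ij}R(x,y,e_i,e_j)$ as $\operatorname{Ric}(y,x)-\operatorname{Ric}(x,y)$, so $n\,d\phi(x,y)=\operatorname{Ric}(y,x)-\operatorname{Ric}(x,y)$, which is exactly (\ref{eqn-1.e}).

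For the remaining identities I would contract (\ref{eqn-1.e}) once more, applying it with $(x,y,z,w)\to(x,e_i,e_j,y)$ and summing against $g^{ij}$; the symmetries (\ref{eqn-1.b}) and the relation $g^{ij}g(e_j,y)=y^i$ then produce $\operatorname{Ric}^\star(x,y) = \operatorname{Ric}(x,y) - \tfrac4n(\Lambda\operatorname{Ric})(x,y)$. Splitting $\operatorname{Ric}$ into its symmetric and skew parts immediately reads off $S\operatorname{Ric}^\star = S\operatorname{Ric}$ and $\Lambda\operatorname{Ric}^\star = \tfrac{n-4}n\Lambda\operatorname{Ric}$; and since $\operatorname{Tr}_g$ annihilates any skew $(0,2)$-tensor, $\operatorname{Tr}_g\operatorname{Ric}^\star = \operatorname{Tr}_g S\operatorname{Ric} = \operatorname{Tr}_g\operatorname{Ric} = \tau_g$. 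The main obstacle is the second step: one must differentiate the tensor equation (\ref{eqn-1.a}) with care, tracking all connection terms, and verify that after re-substituting (\ref{eqn-1.a}) the quadratic-in-$\phi$ contributions are genuinely symmetric in $(x,y)$ so that only $d\phi$ remains. The subsequent contractions are then routine bookkeeping with the symmetries (\ref{eqn-1.b})--(\ref{eqn-1.c}) and the symmetry of $g^{ij}$.
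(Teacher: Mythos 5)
Your proof is correct, and there is nothing in the paper to compare it against: the paper does not prove Theorem \ref{thm-1.1} at all, but simply quotes it from \cite{GNU10,W22}, so your argument supplies a complete, self-contained derivation. Each step checks out under the paper's conventions. The Ricci identity for $(0,2)$-tensors gives
$$(\nabla^2_{x,y}g-\nabla^2_{y,x}g)(z,w)=-R(x,y,z,w)-R(x,y,w,z)\,;$$
differentiating $\nabla g=-2\phi\otimes g$ and re-substituting yields $\nabla^2_{x,y}g=\{-2(\nabla_x\phi)(y)+4\phi(x)\phi(y)\}\,g$, whose alternation in $(x,y)$ is $-2\,d\phi(x,y)\,g$, exactly because the quadratic-in-$\phi$ term is symmetric and torsion-freeness identifies the alternation of $\nabla\phi$ with $d\phi$; and contracting the Bianchi identity (\ref{eqn-1.c}) against $g^{ij}$ in the last two slots does give $g^{ij}R(x,y,e_i,e_j)=\operatorname{Ric}(y,x)-\operatorname{Ric}(x,y)$, converting $2\,d\phi(x,y)\,g(z,w)$ into the right-hand side of (\ref{eqn-1.e}). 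Your final contraction is likewise correct: with $(x,y,z,w)\to(x,e_i,e_j,y)$, the definition of $\operatorname{Ric}^\star$ and the skew-symmetry (\ref{eqn-1.b}) produce $\operatorname{Ric}^\star=\operatorname{Ric}-\tfrac4n\Lambda\operatorname{Ric}$, from which the identities $S\operatorname{Ric}^\star=S\operatorname{Ric}$, $\Lambda\operatorname{Ric}^\star=\tfrac{n-4}n\Lambda\operatorname{Ric}$, and $\tau_g=\operatorname{Tr}_g\operatorname{Ric}^\star$ follow by splitting into symmetric and skew parts. One caution, not a gap: your intermediate relation $d\phi=-\tfrac2n\Lambda\operatorname{Ric}$ disagrees in sign and factor with statements elsewhere in the paper (Section \ref{sect-4} asserts $F=-2\,d\phi$ with $F=-\tfrac2n\Lambda\operatorname{Ric}$, while the proof of Theorem \ref{thm-3.11} uses $d\phi=-\tfrac1n\Lambda\operatorname{Ric}$); those two assertions are already mutually inconsistent and reflect varying conventions for the exterior derivative, and the discrepancy is immaterial here since Theorem \ref{thm-1.1} is $\phi$-free and your derivation is internally consistent throughout.
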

\begin{remark}\rm
If $\nabla=\nabla^{g}$ arises from semi-Riemannian geometry, then one has an additional symmetry:
\begin{equation}\label{eqn-1.f}
R(x,y,z,w) + R(x,y,w,z)=0 \,.
\end{equation}
\end{remark}
\subsection{The algebraic context}\label{sect-1.4}
It is convenient to work in an abstract algebraic setting. Let $V$ be a real vector space of dimension $n \ge 3,$  with  a
non-degenerate scalar product of signature $(p,q)$:
$$\XIx: V \times V \rightarrow \mathbb{R}\,.$$ Let
$\mathfrak{R}=\mathfrak{R}(V)\subset\otimes^4V^*$ be the space of all {\it generalized curvature tensors}. An element
$A\in\otimes^4(V^*)$ belongs to $\mathfrak{R}$ if and only if $A$ satisfies the relations of  Equation
(\ref{eqn-1.b}) and Equation (\ref{eqn-1.c}). In what follows, we will use $A$ and $\mathcal{A}$, respectively, when working in the
abstract algebraic context, and we will use
$R$ and $\mathcal{R}$, respectively, when working in the geometric context; analoguously
we use $\XIx$ and $g$, respectively, to raise and lower
indices as needed.

The subspace $\mathfrak{W}\subset\mathfrak{R}$ of {\it
Weyl tensors} is  defined by imposing, in addition to the relations of Equations (\ref{eqn-1.b}) 
and (\ref{eqn-1.c}), the symmetry of Equation (\ref{eqn-1.e}). The subspace of
{\it algebraic curvature tensors} $\mathfrak{A}\subset\mathfrak{R}$ is  defined by imposing, additionally the relations of
Equation  (\ref{eqn-1.b}) and of Equation  (\ref{eqn-1.c}), the
symmetry of Equation (\ref{eqn-1.f}); elements of $\mathfrak{A}$ are said to be {\it algebraic}.  Note that
$$\mathfrak{A}\subset\mathfrak{W}\subset\mathfrak{R}\,.$$
We shall see in Section \ref{sect-2} that these are proper containments if $n\ge4$. 

Let $A \in \mathfrak{R}(V)$. In the presence of Equations (\ref{eqn-1.b}) and (\ref{eqn-1.c}) the
relations of Equation (\ref{eqn-1.f}) and the curvature symmetry $A(x,y,z,w) = A(z,w,x,y)$ are equivalent, see, for example,
the discussion in \cite{GNU09}.
Consequently, it is useful to introduce the {\it conjugate curvature tensor}
$A^\star$ by setting:
$$ A^\star(x,y,w,z)=- A(x,y,z,w)\,.$$
Note that the conjugate curvature tensor
does not necessarily satisfy the Bianchi identity given in Equation (\ref{eqn-1.c}) (see, for example, the discussion in
\cite{GNU09} Section 2.3). Consequently,
$A^\star$ in general is not  an element of $ \mathfrak{R}(V)$. 
We raise indices to define $\mathcal{A}^\star$; $\mathcal{A}^\star$ is characterized by the identity:
$$\XIx(\mathcal{A}^\star(x,y)z,w):=-A(x,y,w,z)\,.$$

We introduce a convenient notation from the physics literature and set
$$F:= \, - \, \tfrac{2}{n}\, \Lambda{\operatorname{Ric}}\,.$$ 
Let $\XIx_{ij}:=\XIx( e_i,e_j)$. If
$A\in\mathfrak{W}$, we have:
\begin{eqnarray*}
&&A^\star_{ijkl} + A^\star_{ijlk}= -2F_{ij}\,\XIx_{kl},\\
&&A^\star_{ijkl} + A^\star_{jkil}  +
A^\star_{kijl}= \tfrac{4}{n}\,(\Lambda R_{ij}\,h_{kl} + \Lambda R_{jk}\,h_{il} + \Lambda R_{ki}\,h_{jl}) \\
&&\qquad\qquad\qquad\qquad\phantom{..a}= - \,2 \,(F_{ij}\, \XIx_{kl} + F_{jk} \, \XIx_{il} + F_{ki} \, \XIx_{jl}).
\end{eqnarray*}

The following Proposition was proved in \cite{GNU10}:
\begin{proposition}\label{prop-1.3}
Let $n\ge3$. Let $A\in\mathfrak{W}$. The following assertions are equivalent:
$$\begin{array}{llllll}
(1)&A\in\mathfrak{A}.\quad&(2)&A^\star\in\mathfrak{A}.\quad&(3)&A^\star\text{satisfies 
the Bianchi identity} \enspace (\ref{eqn-1.c}). \end{array}$$
\end{proposition}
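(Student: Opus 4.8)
The plan is to show that each of the three assertions is equivalent to the single condition $F=0$, where $F=-\tfrac2n\Lambda\operatorname{Ric}$ is the antisymmetric tensor introduced above; the mutual equivalence of $(1)$, $(2)$, $(3)$ then follows at once.

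First I would dispose of assertion $(1)$. Since $A\in\mathfrak{W}$ already satisfies Equations (\ref{eqn-1.b}) and (\ref{eqn-1.c}), membership $A\in\mathfrak{A}$ is equivalent to $A$ satisfying in addition the pair symmetry (\ref{eqn-1.f}). Rewriting (\ref{eqn-1.e}) in terms of $F$ gives $A(x,y,z,w)+A(x,y,w,z)=2F(x,y)\,h(z,w)$, and because $h$ is nondegenerate this vanishes identically if and only if $F=0$. Hence $(1)\Leftrightarrow F=0$. Turning to $A^\star$, a one-line computation from $A^\star(x,y,z,w)=-A(x,y,w,z)$ together with (\ref{eqn-1.b}) shows that $A^\star$ always satisfies the first-pair antisymmetry (\ref{eqn-1.b}), while the symmetrization formula $A^\star_{ijkl}+A^\star_{ijlk}=-2F_{ij}h_{kl}$ recorded above shows that $A^\star$ satisfies (\ref{eqn-1.f}) exactly when $F=0$. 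Thus $A^\star\in\mathfrak{A}$ (assertion $(2)$) reduces to requiring, beyond these automatic or $F$-controlled conditions, the Bianchi identity for $A^\star$, which is assertion $(3)$. It therefore suffices to prove $(3)\Leftrightarrow F=0$.

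This last equivalence is the heart of the matter. The implication $F=0\Rightarrow(3)$ is immediate from the cyclic-sum formula displayed just before the Proposition. For the converse I would start from that same formula, $A^\star_{ijkl}+A^\star_{jkil}+A^\star_{kijl}=-2(F_{ij}h_{kl}+F_{jk}h_{il}+F_{ki}h_{jl})$: assertion $(3)$ forces the left side to vanish, so $F_{ij}h_{kl}+F_{jk}h_{il}+F_{ki}h_{jl}=0$ for all indices. I would then contract with $h^{kl}$, using $h^{kl}h_{kl}=n$, $h^{kl}h_{il}=\delta^k_i$, and the antisymmetry $F_{ji}=-F_{ij}$, to obtain $(n-2)F_{ij}=0$; since $n\ge3$ this yields $F=0$. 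The main obstacle is precisely this contraction step—one must verify that the coupling of $F$ to the nondegenerate metric in the cyclic relation is rigid enough that the Bianchi identity alone, without the extra pair symmetry, cannot hold unless $F$ itself vanishes, and it is exactly the factor $n-2$ that accounts for the hypothesis $n\ge3$. Assembling the reductions $(1)\Leftrightarrow F=0$, $(3)\Leftrightarrow F=0$, and $(2)\Leftrightarrow(3)$, the three assertions are equivalent.
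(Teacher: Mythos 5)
Your proof is correct. The paper itself gives no proof of Proposition \ref{prop-1.3} (it defers to \cite{GNU10}), but the identities it displays immediately before the statement, namely $A^\star_{ijkl}+A^\star_{ijlk}=-2F_{ij}\XIx_{kl}$ and the cyclic-sum formula $A^\star_{ijkl}+A^\star_{jkil}+A^\star_{kijl}=-2(F_{ij}\XIx_{kl}+F_{jk}\XIx_{il}+F_{ki}\XIx_{jl})$, are exactly the tools you use, and your reduction of all three assertions to $F=0$, with the $\XIx^{kl}$-contraction yielding $(n-2)F_{ij}=0$ and hence $F=0$ for $n\ge3$, is precisely the argument this setup is designed to support.
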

The Proposition implies (see \cite{GNU10}):
\begin{theorem}
Let $n\ge3$. Let $\mathcal{W}=(N,g,\nabla)$ be a Weyl manifold. Assume that $H^1(N;\mathbb{R})=0$ and that the
conjugate curvature tensor       $R^\star$ is a generalized curvature tensor. Then there exists $f\in
C^\infty(N)$ so that the Weyl connection 
$\nabla$ is the Levi-Civita connection of the conformally equivalent metric $e^{2f}g$.
\end{theorem}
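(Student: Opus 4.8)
The plan is to show that the hypothesis on $R^\star$ forces the skew part of the Ricci tensor to vanish, to translate this into $d\phi=0$, and then to invoke the cohomological criterion recorded in the Introduction to globalize the result. First I would observe that the curvature $R$ of the Weyl connection lies in $\mathfrak{W}$: by Theorem \ref{thm-1.1} it satisfies Equations (\ref{eqn-1.b}), (\ref{eqn-1.c}) and (\ref{eqn-1.e}), which are exactly the defining relations of the space of Weyl tensors. Next, the conjugate tensor $R^\star$ automatically satisfies the antisymmetry (\ref{eqn-1.b}), as a one-line check from $R^\star(x,y,z,w)=-R(x,y,w,z)$ together with (\ref{eqn-1.b}) for $R$ shows. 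Hence the assumption that $R^\star$ be a \emph{generalized} curvature tensor reduces to the single requirement that $R^\star$ satisfy the Bianchi identity (\ref{eqn-1.c}); this is precisely assertion (3) of Proposition \ref{prop-1.3}. Applying that Proposition, which is valid since $n\ge3$, I conclude assertion (1), namely $R\in\mathfrak{A}$, so $R$ enjoys the extra symmetry (\ref{eqn-1.f}).

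The key step is to convert the symmetry (\ref{eqn-1.f}) into a statement about $\phi$. By Equation (\ref{eqn-1.e}) we have $R(x,y,z,w)+R(x,y,w,z)=-\tfrac4n(\Lambda\operatorname{Ric})(x,y)\,g(z,w)$, so $R\in\mathfrak{A}$ is equivalent to $\Lambda\operatorname{Ric}=0$, that is, to $F=0$. It remains to identify $F$ with $d\phi$, and I would do this by computing the trace of the curvature operator in two ways. Contracting (\ref{eqn-1.e}) in the last two slots with $g^{ij}$ gives $\operatorname{Tr}\mathcal{R}(x,y)=g^{ij}R(x,y,e_i,e_j)=-2(\Lambda\operatorname{Ric})(x,y)=nF(x,y)$. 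On the other hand, writing $\nabla=\nabla^g+C$ with difference tensor $C(X,Y)=\phi(X)Y+\phi(Y)X-g(X,Y)\phi^\sharp$, the unique symmetric solution of $\nabla g=-2\phi\otimes g$, the endomorphism $Y\mapsto C(X,Y)$ has trace $n\,\phi(X)$. Since the trace annihilates the commutator term $[C_X,C_Y]$ in the curvature and $\operatorname{Tr}\mathcal{R}^g=0$ by (\ref{eqn-1.f}) for the Levi-Civita connection, one gets $\operatorname{Tr}\mathcal{R}(x,y)=d(n\phi)(x,y)=n\,d\phi(x,y)$. Comparing the two expressions yields $F=d\phi$, equivalently $\Lambda\operatorname{Ric}=-\tfrac n2\,d\phi$, so $\Lambda\operatorname{Ric}=0$ forces $d\phi=0$. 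I expect this identification $F=d\phi$, the genuine geometric content linking the algebraic hypothesis to the closedness of $\phi$, to be the main obstacle; the remaining steps are formal.

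Finally, with $d\phi=0$ the form $\phi$ is closed and defines a class $[\phi]\in H^1(N;\mathbb{R})$. The hypothesis $H^1(N;\mathbb{R})=0$ gives $[\phi]=0$, so $\phi=df$ for some $f\in C^\infty(N)$. Performing the gauge transformation $(g,\phi)\mapsto(g_1,\phi_1)=(e^{2f}g,\phi-df)$ produces $\phi_1=0$; since $\nabla g_1=-2\phi_1\otimes g_1=0$ and $\nabla$ is torsion free, $\nabla$ is the Levi-Civita connection of $g_1=e^{2f}g$, as claimed. This last step is exactly the globalization criterion recorded in the Introduction, where $d\phi=0$ supplies the local statement and the vanishing of $[\phi]$ upgrades it to a global one.
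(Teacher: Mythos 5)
Your proof is correct, and its logical skeleton is the one the paper intends: the paper's entire proof is to invoke Proposition \ref{prop-1.3} (getting $R_P\in\mathfrak{A}$ at each point) and then to cite \cite{GNU10} for the identity relating $d\phi$ to $\Lambda\operatorname{Ric}$ — the same route is spelled out again in the proof of Theorem \ref{thm-3.11}, where the identity $d\phi=-\tfrac1n\Lambda\operatorname{Ric}$ is quoted as Theorem~6 of \cite{GNU10} — after which $H^1(N;\mathbb{R})=0$ finishes matters exactly as in your last paragraph. Where you genuinely add something is the middle step: instead of citing the identity, you derive it by computing $\operatorname{Tr}\mathcal{R}$ twice — once by contracting Equation (\ref{eqn-1.e}), giving $\operatorname{Tr}\mathcal{R}(x,y)=-2(\Lambda\operatorname{Ric})(x,y)=nF(x,y)$, and once from the difference tensor $C(X,Y)=\phi(X)Y+\phi(Y)X-g(X,Y)\phi^{\sharp}$ of Theorem \ref{thm-3.1}, where $\operatorname{Tr}C_X=n\phi(X)$, the trace kills the commutator $[C_X,C_Y]$, and $\operatorname{Tr}\mathcal{R}^g=0$ by (\ref{eqn-1.f}), yielding $\operatorname{Tr}\mathcal{R}=n\,d\phi$. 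This is exactly the standard ``curvature of the induced connection on $\Lambda^nTN$'' argument, and it makes your proof self-contained at the precise point where the paper leans on an external reference; that is a real gain. One cosmetic remark: your constant ($F=d\phi$, i.e.\ $\Lambda\operatorname{Ric}=-\tfrac n2\,d\phi$) differs from both the $d\phi=-\tfrac1n\Lambda\operatorname{Ric}$ quoted in the proof of Theorem \ref{thm-3.11} and the $F=-2\,d\phi$ asserted in Section \ref{sect-4} (which are themselves mutually inconsistent); this is purely a matter of the factor-$\tfrac12$/sign convention for the exterior derivative and is immaterial, since the argument only requires $d\phi$ to be a nonzero constant multiple of $\Lambda\operatorname{Ric}$.
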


The definition of the conjugate curvature tensor implies
 $\operatorname{Ric}^\star(R) = \operatorname{Ric}(R^\star)$. 
Moreover, Theorem \ref{thm-1.1} implies:
\begin{proposition} Let $(N,g,\nabla)$ be a Weyl manifold. Then:
\begin{enumerate}
\item If  $n = 4$, then  $\Lambda \operatorname{Ric}^\star = 0$ and thus $\operatorname{Ric}^\star$ is symmetric.
\item  If  $n \ne 4$, then $R^\star$ satisfies the relation:
$$ R^\star_{ijlk} +  R^\star_{ijkl} = \, \tfrac{4}{n-4} \cdot \Lambda R^\star_{ij} \cdot g_{kl}.$$
\end{enumerate}
\end{proposition}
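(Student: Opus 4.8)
The plan is to derive everything directly from Theorem \ref{thm-1.1}, using no new input beyond the two relations it supplies: the antisymmetric defect formula \eqref{eqn-1.e}, and the identity $\Lambda\operatorname{Ric}^\star=\tfrac{n-4}{n}\Lambda\operatorname{Ric}$. Part (1) is then essentially immediate: setting $n=4$ in $\Lambda\operatorname{Ric}^\star=\tfrac{n-4}{n}\Lambda\operatorname{Ric}$ forces $\Lambda\operatorname{Ric}^\star=0$, so $\operatorname{Ric}^\star$ coincides with its symmetric part and is symmetric. No further work is needed there, and the only thing to check is that the coefficient $\tfrac{n-4}{n}$ really does vanish exactly at $n=4$, which it manifestly does.

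For part (2) I would first unwind the definition of the conjugate curvature tensor at the level of components. From $A^\star(x,y,w,z)=-A(x,y,z,w)$ one reads off, in the geometric notation, $R^\star_{ijkl}=-R_{ijlk}$ and $R^\star_{ijlk}=-R_{ijkl}$. Adding these gives $R^\star_{ijlk}+R^\star_{ijkl}=-(R_{ijkl}+R_{ijlk})$. Now I apply the symmetry \eqref{eqn-1.e} in components, namely $R_{ijkl}+R_{ijlk}=-\tfrac4n\,\Lambda R_{ij}\,g_{kl}$, to conclude
$$R^\star_{ijlk}+R^\star_{ijkl}=\tfrac4n\,\Lambda R_{ij}\,g_{kl}.$$

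The final step is to trade $\Lambda R_{ij}$ for $\Lambda R^\star_{ij}$. Since $\operatorname{Ric}^\star(R)=\operatorname{Ric}(R^\star)$, the antisymmetric part $\Lambda R^\star_{ij}$ is precisely $\Lambda\operatorname{Ric}^\star$, and Theorem \ref{thm-1.1} gives $\Lambda R^\star_{ij}=\tfrac{n-4}{n}\Lambda R_{ij}$. Here I would flag the one genuine hypothesis: the relation can be inverted to $\Lambda R_{ij}=\tfrac{n}{n-4}\Lambda R^\star_{ij}$ only when $n\neq4$, which is exactly the standing assumption of (2). Substituting yields $\tfrac4n\cdot\tfrac{n}{n-4}\Lambda R^\star_{ij}\,g_{kl}=\tfrac{4}{n-4}\Lambda R^\star_{ij}\,g_{kl}$, as claimed. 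I do not expect a substantive obstacle anywhere in this argument; the only real hazard is sign and index bookkeeping in the conjugation step $R^\star_{ijkl}=-R_{ijlk}$ and the consistent identification of $\Lambda R^\star$ with $\Lambda\operatorname{Ric}^\star$, together with the clean separation of the degenerate case $n=4$ from the generic case.
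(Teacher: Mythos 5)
Your proof is correct and takes essentially the same route as the paper, which presents the proposition as an immediate consequence of Theorem \ref{thm-1.1} combined with the identity $\operatorname{Ric}^\star(R)=\operatorname{Ric}(R^\star)$ — exactly the two ingredients you use. Your sign bookkeeping in $R^\star_{ijkl}=-R_{ijlk}$, the intermediate identity $R^\star_{ijlk}+R^\star_{ijkl}=\tfrac4n\,\Lambda R_{ij}\,g_{kl}$ (which also matches the paper's displayed formula $A^\star_{ijkl}+A^\star_{ijlk}=-2F_{ij}\XIx_{kl}$), and the restriction of the inversion $\Lambda R_{ij}=\tfrac{n}{n-4}\Lambda R^\star_{ij}$ to $n\ne4$ are all handled correctly.
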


We say that the triple $\mathcal{W}:=(V,h,A)$ is a {\it Weyl model} if $A\in\mathfrak{W}$. We say that such a triple is {\it
geometrically realized} by the Weyl manifold
$\mathcal{W} = (N,g,\nabla)$ if there exists a point $P\in N$ and an isomorphism $\Phi:V\rightarrow T_PN$ so that 
$\Phi^*g_P=\XIx$ and so that
$\Phi^*R_P=A$. One can pass from the algebraic setting to the geometric setting using the following result
\cite{GNU10}.

\begin{theorem}
Every Weyl model is geometrically realized by a Weyl manifold.
\end{theorem}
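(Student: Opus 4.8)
The plan is to realize the model on an open neighbourhood of the origin in $V$ itself, taking $N=V$, $P=0$, and $\Phi=\id$, so that the requirements $\Phi^*g_0=\XIx$ and $\Phi^*R_0=A$ reduce to prescribing $g(0)=\XIx$ together with the curvature of $\nabla$ at the origin. I would build $\nabla$ in the form $\nabla=\nabla^g+D_\phi$, where $g$ is a metric germ with $g(0)=\XIx$, where $\phi$ is a $1$-form with $\phi(0)=0$, and where $D_\phi(X,Y)=\phi(X)Y+\phi(Y)X-g(X,Y)\phi^\sharp$ is the difference tensor forced by the Weyl condition (\ref{eqn-1.a}). Any connection of this shape is automatically torsion free and satisfies $\nabla g=-2\phi\otimes g$, hence $(V,g,\nabla)$ is a Weyl manifold for every such choice of $g$ and $\phi$.

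Next I would compute $R(\nabla)$ at the origin. Working in $g$-normal coordinates centred at $0$ and using $\phi(0)=0$ kills the Christoffel symbols and the quadratic $D_\phi\cdot D_\phi$ contributions, leaving
$$R(\nabla)_0=R^g_0+T,$$
where $T$ is the correction linear in the matrix $\Psi:=(\nabla^g\phi)(0)$. A direct expansion shows that $T$ splits as $T=T_\sigma+T_\alpha$ with $\sigma=S\Psi$ and $\alpha=\Lambda\Psi$: here $T_\sigma$ is the Kulkarni--Nomizu product of $\sigma$ and $\XIx$ (hence lies in $\mathfrak{A}$), while $T_\alpha$ is the explicit antisymmetric contribution whose only non-algebraic part is $2\,\alpha(x,y)\,\XIx(z,w)$; one checks that $T_\sigma,T_\alpha\in\mathfrak{W}$. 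Since the one-jet of a $1$-form at a point is unconstrained, $\Psi$, and therefore $\alpha$, can be prescribed arbitrarily.

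The two remaining steps are to match the $2$-form part and then to realize the leftover algebraic tensor. First I would choose $\alpha$ so that $T_\alpha$ carries the same antisymmetric Ricci part $F=-\tfrac2n\Lambda\operatorname{Ric}$ as the target $A$; by (\ref{eqn-1.e}) this is precisely the quantity controlling the failure of the pair symmetry (\ref{eqn-1.f}). Setting $\sigma=0$, the residual tensor $A-T_\alpha$ then satisfies the Bianchi identity (both $A$ and $T_\alpha$ do) and now also satisfies (\ref{eqn-1.f}), so $A-T_\alpha\in\mathfrak{A}$. Finally I would invoke the classical realization of algebraic curvature tensors: there is a metric germ $g$ with $g(0)=\XIx$ whose Levi-Civita curvature at the origin equals the prescribed algebraic tensor $A-T_\alpha$. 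Taking this $g$ together with the $\phi$ above yields
$$R(\nabla)_0=R^g_0+T_\alpha=(A-T_\alpha)+T_\alpha=A,$$
which is the desired realization.

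The main obstacle is the bookkeeping in the middle step: establishing the precise formula for $T$ and verifying that its non-algebraic part is governed by $\alpha$ alone, so that the gauge $1$-form $\phi$ and the metric $g$ realize complementary pieces of $\mathfrak{W}$ without interfering. Once that structural computation is in hand, the metric realization of the algebraic piece $\mathfrak{A}$ is standard, and the two constructions combine to give the stated result.
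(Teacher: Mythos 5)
Your construction is correct. Note first that the paper itself gives no proof of this statement -- it is quoted from \cite{GNU10} -- so there is no internal argument to compare against; in substance, your construction is the realization argument of that reference, and it dovetails exactly with the machinery this paper does develop. You take the Weyl connection in the form $\nabla_xy=\nabla^g_xy+\phi(x)y+\phi(y)x-g(x,y)\phi^{\sharp}$, which is the equivalence of Theorem \ref{thm-3.1}; your splitting $T=T_\sigma+T_\alpha$ of the first-order correction is the paper's own algebra, since $T_\sigma=\sigma\wedge_1\XIx\in\mathfrak{A}$ and $T_\alpha=(\sigma_4-\sigma_5)\alpha\in\mathfrak{P}$ in the notation of Theorem \ref{thm-2.6}; and the matching of the antisymmetric Ricci part is exactly the computation $\operatorname{Ric}((\sigma_4-\sigma_5)\varphi)=-n\varphi$ from the proof of that theorem, so the choice $\alpha=-\tfrac1n\Lambda\operatorname{Ric}(A)$ makes $\Lambda\operatorname{Ric}(A-T_\alpha)=0$, whence $A-T_\alpha\in\mathfrak{A}$ by Equation (\ref{eqn-1.e}), and the classical metric-germ realization (e.g.\ $g_{ij}(x)=\XIx_{ij}-\tfrac13(A-T_\alpha)_{ikjl}x^kx^l$) finishes the proof. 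One phrase in your write-up is loose: the $\wedge_1$-remainder of $T_\alpha$ is not itself algebraic (for antisymmetric $\alpha$ it fails the Bianchi identity (\ref{eqn-1.c})), so ``whose only non-algebraic part is $2\alpha(x,y)\XIx(z,w)$'' should be read only as saying that the failure of the pair symmetry (\ref{eqn-1.f}) is concentrated in that term; this does not affect your argument, which uses nothing beyond $T_\alpha\in\mathfrak{W}$ and $\Lambda\operatorname{Ric}(T_\alpha)=-n\alpha$.
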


Here is a brief outline to the remainder of the paper. In Section \ref{sect-2}, we derive the
curvature decomposition of Higa \cite{H94} for Weyl manifolds; we shall discuss Higa's result
in the context of the decomposition results from \cite{BNGS06,GNU09}. In Section \ref{sect-3}, we study further  curvature
properties and various special classes
of  {\it Weyl manifolds}. We shall
apply  Higa's   gauge
invariant {\it canonical  metric} and prove several global results. We shall also
 study  Einstein-Weyl manifolds and
projectively flat Weyl connections. We conclude our discussion in Section \ref{sect-4} applying our decomposition results to the study of 
the well known gauge invariant curvatures, the {\it directional curvature}  and  the {\it length curvature}. Set
$\mathcal{F}(x,y)z:=F(x,y)z$.

\section{\bf Curvature decompositions}\label{sect-2}
We recall some results from  \cite{B90}
related to earlier results of Singer and Thorpe \cite{ST69} (see also the discussion in \cite{GNU09}). Let
$\mathcal{O}:=O(V,\XIx)$ be the associated {\it orthogonal group}.
\begin{definition}
\rm Set\begin{enumerate}
\item $S^2:=\{\theta\in\otimes^2(V^*):\theta_{ij}=\theta_{ji}\}$.
\item
$S^2_0:=\{\theta\in\otimes^2(V^*):\theta_{ij}=\theta_{ji},\quad \XIx^{ij}\theta_{ij}=0\}$.
\item $\Lambda^2:=\{\theta\in\otimes^2(V^*):\theta_{ij}=-\theta_{ji}\}$.
\item $W_6:=\{A\in\otimes^4(V^*):A_{ijkl}=-A_{jikl}=A_{klij},\quad \XIx^{il}A_{ijkl}=0$,
\smallbreak\qquad\qquad\qquad
\hspace{8mm} $A_{ijkl}+A_{jkil}+A_{kijl}=0\}$.
\item $W_7:=\{A\in\otimes^4(V^*):A_{ijkl}=-A_{jikl}=A_{ijlk},\quad \XIx^{il}A_{ijkl}=0,$
\smallbreak\qquad\qquad\qquad
\hspace{8mm}
$A_{kjil}+A_{ikjl}-A_{ljik}-A_{iljk}=0\}$.
\item $W_8:=\{A\in\otimes^4(V^*):A_{ijkl}=-A_{jikl}=-A_{klij},\quad \XIx^{il}A_{ijkl}=0\}$.
\end{enumerate}
Note that $W_6$ and $W_7$ are submodules of $\mathfrak{R}$ whereas $W_8$ is not a submodule of
$\mathfrak{R}$.\end{definition}
\subsection{ Decompositions of $\mathfrak{A}$ and  $\mathfrak{R}$}
\begin{theorem}\label{thm-2.2}
Let $n\ge4$.
\begin{enumerate}
\item The modules $\{\mathbb{R}$, $S_0^2$, $\Lambda^2$, $W_6$, $W_7$, $W_8\}$ are inequivalent and irreducible $\mathcal{O}$
modules.
\item There is an $\mathcal{O}$ module isomorphism 
   $\mathfrak{R}\approx\mathbb{R}\oplus 2\cdot S_0^2\oplus
2\cdot\Lambda^2\oplus W_6\oplus W_7\oplus W_8$.
\item There is an $\mathcal{O}$ module isomorphism
$\mathfrak{A}\approx\mathbb{R}\oplus S_0^2\oplus W_6$.
\end{enumerate}
\end{theorem}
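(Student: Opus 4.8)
The plan is to prove the three assertions in the order stated, reducing everything to two tasks: first, that the six listed spaces are irreducible and pairwise inequivalent (assertion (1)); and second, the computation of the multiplicity with which each occurs in $\mathfrak{R}$ and in $\mathfrak{A}$. Because $\mathcal{O}=O(V,h)$ is reductive and we work in characteristic zero, $\mathfrak{R}$ and $\mathfrak{A}$ are completely reducible, so once (1) is known the isomorphisms in (2) and (3) are equivalent to these multiplicity counts. For (1) I would complexify and identify each module with an irreducible module of the complex orthogonal group by exhibiting a highest weight vector: $\mathbb{R}$, $S_0^2$ and $\Lambda^2$ are the standard lowest order irreducibles, $W_6$ is the space of Weyl conformal curvature tensors (irreducible for $n\ge4$ by Singer--Thorpe \cite{ST69}; see also \cite{B90,GNU09}), $W_8$ is the trace free part of $\Lambda^2(\Lambda^2 V^*)$, and $W_7$ is cut out by its stated tableau symmetry. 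Pairwise inequivalence then follows because the six highest weights are distinct; the dimensions serve as a numerical check.

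For assertion (2) I would treat the trace part and the trace free part separately. A generalized curvature tensor admits two independent Ricci type contractions, $\operatorname{Ric}$ and $\operatorname{Ric}^\star$; since pair symmetry is not imposed on $\mathfrak{R}$, their symmetric parts are independent and their antisymmetric parts are independent, while the Bianchi identity (\ref{eqn-1.c}) forces the two scalar traces to coincide. Making this precise by enumerating all $\mathcal{O}$-equivariant maps $\mathfrak{R}\to S^2$ and $\mathfrak{R}\to\Lambda^2$ and showing, using (\ref{eqn-1.b}) and (\ref{eqn-1.c}), that each space of such maps is two-dimensional, I obtain multiplicity $1$ for $\mathbb{R}$ and multiplicity $2$ for each of $S_0^2$ and $\Lambda^2$. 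Splitting inclusions are built from Kulkarni--Nomizu type products of $h$ with symmetric or antisymmetric $2$-tensors and with $h$ itself; checking that these land in $\mathfrak{R}$ and are split by the contractions shows the trace part of $\mathfrak{R}$ is isomorphic to $\mathbb{R}\oplus 2\cdot S_0^2\oplus 2\cdot\Lambda^2$.

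The heart of the argument, and the step I expect to be the main obstacle, is the trace free core. Here $W_6$ and $W_7$ occur as genuine submodules (lying in the pair symmetric part of $\Lambda^2\otimes\Lambda^2$ and in $\Lambda^2\otimes S^2$ respectively), each with multiplicity one, but $W_8$ is delicate: as the paper notes, its elements need not satisfy the Bianchi identity, so $W_8$ is \emph{not} a submodule of $\mathfrak{R}$. Nonetheless $W_8$ occurs once abstractly, and I would realize this by constructing an explicit $\mathcal{O}$-equivariant embedding $W_8\hookrightarrow\mathfrak{R}$ whose image is a Bianchi corrected copy of the trace free part of $\Lambda^2(\Lambda^2 V^*)$. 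The difficulty is that the pair interchange $A_{ijkl}\mapsto A_{klij}$ does not preserve $\mathfrak{R}$ on its own and the Bianchi constraint does not respect the splitting $\Lambda^2 V^*\otimes\otimes^2 V^*=(\Lambda^2\otimes S^2)\oplus(\Lambda^2\otimes\Lambda^2)$, so the three projections onto $W_6$, $W_7$, $W_8$ must be organized through this decomposition while carefully tracking how Bianchi and the trace conditions act on each constituent. I would confirm that no further irreducibles appear and that the multiplicities are exactly as claimed by verifying that the candidate projections sum to the identity on the trace free core, corroborated by the dimension identity $\dim\mathfrak{R}=\tfrac{n^2(n^2-1)}3=1+2\dim S_0^2+2\dim\Lambda^2+\dim W_6+\dim W_7+\dim W_8$.

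Finally, assertion (3) is the specialization in which the extra symmetry (\ref{eqn-1.f}) -- equivalently pair symmetry -- is imposed. This forces $\operatorname{Ric}=\operatorname{Ric}^\star$ (Theorem \ref{thm-1.1}) and annihilates every antisymmetric contraction, so the trace part collapses from $\mathbb{R}\oplus 2\cdot S_0^2\oplus 2\cdot\Lambda^2$ to $\mathbb{R}\oplus S_0^2$; on the trace free core, pair symmetry together with Bianchi eliminates the $W_7$ and $W_8$ constituents, which require non-algebraic or pair antisymmetric behavior, leaving only $W_6$. This recovers the classical Singer--Thorpe decomposition $\mathfrak{A}\approx\mathbb{R}\oplus S_0^2\oplus W_6$.
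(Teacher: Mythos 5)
Your proposal reaches the correct conclusions, but it takes a genuinely different route from the paper, and the difference is worth spelling out. The paper's proof is organized around a single explicit construction: the equivariant map
$\pi_{\Lambda\otimes S}(A)(x,y,z,w)=\tfrac12\{A(x,y,z,w)+A(x,y,w,z)\}$
from $\mathfrak{R}$ to $\Lambda^2\otimes S^2$, whose kernel is exactly $\mathfrak{A}$, together with the explicit right inverse
$\sigma_{\Lambda\otimes S}(\Theta)_{ijkl}=\Theta_{ijkl}+\tfrac12\{\Theta_{kjil}+\Theta_{ikjl}-\Theta_{ljik}-\Theta_{iljk}\}$,
which is verified by hand to satisfy the Bianchi identity (so that it really lands in $\mathfrak{R}$) and to satisfy $\pi_{\Lambda\otimes S}\circ\sigma_{\Lambda\otimes S}=\operatorname{id}$. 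This produces a split short exact sequence and hence the $\mathcal{O}$-module isomorphism $\mathfrak{R}\approx\mathfrak{A}\oplus(\Lambda^2\otimes S^2)$; assertion (3) is then quoted from Singer--Thorpe \cite{ST69} (it is an input, not an output, of the paper's argument), and assertion (2) follows by decomposing the tensor product $\Lambda^2\otimes S^2\approx S_0^2\oplus 2\cdot\Lambda^2\oplus W_7\oplus W_8$, a standard $\operatorname{GL}$-to-$\mathcal{O}$ branching computation whose details the paper also omits. The chief payoff of that route lies exactly at the point you single out as your principal obstacle: since assertion (2) asserts only an abstract isomorphism, the paper never needs to locate a copy of $W_8$ inside $\mathfrak{R}$; the $W_8$-isotypic constituent appears, up to isomorphism, inside the complementary factor $\Lambda^2\otimes S^2$, and the fact that $W_8$ is not a submodule of $\mathfrak{R}$ creates no friction at all. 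Your strategy --- Schur's lemma plus enumeration of equivariant contractions for the trace part, explicit submodules $W_6$ and $W_7$, a ``Bianchi-corrected'' embedding of $W_8$, and a closing dimension count --- is viable and has the merit of producing explicit projection formulas of the kind the paper uses later (Sections \ref{sect-2.3}--\ref{sect-2.5}), but it concentrates the hardest, still-unwritten work precisely on the $W_8$ embedding and on verifying that no unlisted irreducible occurs, which a dimension identity alone cannot rule out.

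Two smaller points to repair if you carry your plan through. First, it is the antisymmetry (\ref{eqn-1.b}), not the Bianchi identity (\ref{eqn-1.c}), that relates the two scalar traces (they agree up to sign) and kills the contraction over the first index pair; the role of Bianchi in your multiplicity count is rather to express the remaining contraction $h^{kl}A_{ijkl}$ in terms of $\Lambda\operatorname{Ric}$, cutting the candidate $\Lambda^2$-valued equivariant maps from three down to two. Second, the highest-weight argument for assertion (1) must be run for the full group $\mathcal{O}$ rather than for $SO(V,h)$: when $n=4$, the module $W_6$ splits as $W_6^+\oplus W_6^-$ over $SO$ and is irreducible as an $\mathcal{O}$-module only because the two halves are interchanged by orientation-reversing isometries --- exactly the caveat recorded in the paper's remark following the theorem.
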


\begin{remark}\rm 
If $n=3$, we set $W_6=W_8=0$ to obtain the corresponding decomposition. For $n\ge5$, the modules of Assertion (1) 
in Theorem \ref{thm-2.2} are also  irreducible $SO(V,\XIx)$ modules; if $n=4$, we must decompose $W_6=W_6^+\oplus
W_6^-$ as the sum of the dual and anti-self dual Weyl conformal curvature tensors. The space $W_6$ is the space of all {\it Weyl
conformal curvature tensors}. One has:
$$\begin{array}{ll}
\dim\{\mathfrak{R}\}=\frac13n^2(n^2-1),&\dim\{\Lambda^2\}=\textstyle\frac{n(n-1)}2,\\
\vspace{1mm}
\dim\{\mathfrak{A}\}=\textstyle\frac1{12}n^2(n^2-1),&\dim\{W_6\}=\textstyle\frac{n(n+1)(n-3)(n+2)}{12},\\
\vspace{1mm}
\dim\{\mathbb{R}\}=1,&\dim\{W_7\}=\textstyle\frac{(n-1)(n-2)(n+1)(n+4)}8,\\
\vspace{1mm}
\dim\{S^2_0)=\textstyle\frac{(n-1)(n+2)}2,&\dim\{W_8\}=\textstyle\frac{n(n-1)(n-3)(n+2)}8\,.
\end{array}$$
\end{remark}

\begin{proof} We sketch the proof of Theorem \ref{thm-2.2}. 
Define $\pi_{\Lambda\otimes S}:\mathfrak{R}(V)\rightarrow\Lambda^2\otimes S^2$ by:
$$\pi_{\Lambda\otimes S}(A)(x,y,z,w):=\textstyle\frac12\{A(x,y,z,w)+A(x,y,w,z)\}\,.$$
One verifies easily that $\ker(\pi_{\Lambda\otimes S})=\mathfrak{A}(V)$. Let $\Theta\in\Lambda^2\otimes S^2$. 
Define
$$\{\sigma_{\Lambda\otimes
S}(\Theta)\}_{ijkl}:=\Theta_{ijkl}+\textstyle\frac12\{\Theta_{kjil}+\Theta_{ikjl}-\Theta_{ljik}-\Theta_{iljk}\}\,.
$$
Let $A=\sigma_{\Lambda\otimes S}\Theta$. Clearly $A_{ijkl}=-A_{jikl}$. We verify the Bianchi
identity is satisfied and show thereby that $\sigma_{\Lambda\otimes S}:\Lambda^2\otimes S^2\rightarrow\mathfrak{R}$ by
computing:
\medbreak\qquad
$A_{ijkl}+A_{jkil}+A_{kijl}
=\Theta_{ijkl}+\textstyle\frac12\{\Theta_{kjil}+\Theta_{ikjl}-\Theta_{ljik}-\Theta_{iljk}\}$
\smallbreak\qquad\quad\ 
$+\Theta_{jkil}+\textstyle\frac12\{\Theta_{ikjl}+\Theta_{jikl}-\Theta_{lkji}-\Theta_{jlki}\}$
\smallbreak\qquad\quad\ 
$+\Theta_{kijl}+\textstyle\frac12\{\Theta_{jikl}+\Theta_{kjil}-\Theta_{likj}-\Theta_{klij}\}$
\smallbreak\qquad\quad
$=\Theta_{ijkl}(1-\frac12-\frac12)+\Theta_{jkil}(1-\frac12-\frac12)+\Theta_{kijl}(1-\frac12-\frac12)$
\smallbreak\qquad\quad\ 
$+\Theta_{ljik}(-\frac12+\frac12)+\Theta_{iljk}(-\frac12+\frac12)+\Theta_{klji}(-\frac12+\frac12)=0$.
\medbreak\noindent
We show that $\sigma_{\Lambda\otimes S}$ is a splitting of $\pi_{\Lambda\otimes S}$ by checking:
\medbreak\qquad
$(\pi_{\Lambda\otimes S}\sigma_{\Lambda\otimes S}\Theta)_{ijkl}$
\smallbreak\qquad\quad
$=\frac12\Theta_{ijkl}+\textstyle\frac14\{\Theta_{kjil}+\Theta_{ikjl}-\Theta_{ljik}-\Theta_{iljk}\}$
\smallbreak\qquad\quad\ 
$+\frac12\Theta_{ijlk}+\textstyle\frac14\{\Theta_{ljik}+\Theta_{iljk}-\Theta_{kjil}-\Theta_{ikjl}\}
=\Theta_{ijkl}$.
\medbreak\noindent 
Consequently, $\pi_{\Lambda\otimes S}$ is a surjective map and thus we have an $\mathcal{O}$ module isomorphism:
$$\mathfrak{R}\approx
\mathfrak{A}\oplus (\Lambda^2\otimes S^2)\,.$$
The Theorem then follows from the decomposition of $\mathfrak{A}$ as an $\mathcal{O}$ module \cite{ST69} and by decomposing
$\Lambda^2\otimes S^2$ as an $\mathcal{O}$ module; we omit details in the interests of brevity.
\end{proof}

\begin{definition}
\rm If $\varphi\in\Lambda^2$, set\begin{enumerate}
\item $\sigma_4(\varphi)(x,y,z,w):=2\varphi(x,y)\XIx( z,w)+\varphi(x,z)\XIx( y,w)-\varphi(y,z)\XIx(x,w)$.
\item $\sigma_5(\varphi)(x,y,z,w):=\varphi(x,w)\XIx( y,z)-\varphi(y,w)\XIx(x,z)$.
\end{enumerate}\end{definition}

\begin{lemma}
If $\varphi\in\Lambda^2$  then $\sigma_4(\varphi)\in\mathfrak{R}$ and $\sigma_5(\varphi)\in\mathfrak{R}$.
\end{lemma}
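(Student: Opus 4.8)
The plan is to verify directly that each of $\sigma_4(\varphi)$ and $\sigma_5(\varphi)$ satisfies the two defining relations of $\mathfrak{R}$, namely the antisymmetry in the first pair of arguments given in Equation (\ref{eqn-1.b}) and the first Bianchi identity given in Equation (\ref{eqn-1.c}). No pair symmetry or $\mathfrak{A}$-type relation is needed, since membership in $\mathfrak{R}$ requires only (\ref{eqn-1.b}) and (\ref{eqn-1.c}). Both tensors are written as linear combinations of products $\varphi(\cdot,\cdot)\,\XIx(\cdot,\cdot)$, so every identity reduces to bookkeeping that exploits only two facts: the antisymmetry $\varphi(a,b)=-\varphi(b,a)$ coming from $\varphi\in\Lambda^2$, and the symmetry $\XIx(a,b)=\XIx(b,a)$.

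First I would check Equation (\ref{eqn-1.b}). For $\sigma_5(\varphi)$ this is immediate by inspection: interchanging the first two arguments exchanges the two summands and reverses the overall sign. For $\sigma_4(\varphi)$ I would form the sum $\sigma_4(\varphi)(x,y,z,w)+\sigma_4(\varphi)(y,x,z,w)$; the terms carrying $\XIx(y,w)$ and $\XIx(x,w)$ cancel in pairs, while the leading term contributes $2(\varphi(x,y)+\varphi(y,x))\XIx(z,w)=0$ by antisymmetry of $\varphi$.

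Next I would check the Bianchi identity (\ref{eqn-1.c}) by forming the cyclic sum over $(x,y,z)$ with $w$ held fixed, organizing the terms by the factor that involves the fixed argument $w$. For $\sigma_4(\varphi)$ I would collect by the metric factors $\XIx(z,w)$, $\XIx(y,w)$, $\XIx(x,w)$; each of the three resulting coefficients is of the form $\varphi(a,b)+\varphi(b,a)$ and hence vanishes by antisymmetry of $\varphi$. For $\sigma_5(\varphi)$ I would instead collect by the factors $\varphi(x,w)$, $\varphi(y,w)$, $\varphi(z,w)$; here each coefficient is a difference $\XIx(a,b)-\XIx(b,a)$ and so vanishes by symmetry of $\XIx$. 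In both cases the cyclic sum is zero, which completes the verification that $\sigma_4(\varphi),\sigma_5(\varphi)\in\mathfrak{R}$.

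The computation is entirely routine; the only real care needed is the bookkeeping of which slot is permuted into which position under the cyclic shift, together with the systematic use of $\XIx(a,b)=\XIx(b,a)$ to match factors such as $\XIx(z,y)$ with $\XIx(y,z)$. The one feature worth checking carefully is that the coefficient $2$ in the leading term of $\sigma_4$ is precisely what makes both (\ref{eqn-1.b}) and (\ref{eqn-1.c}) close, so this is the step I would confirm most attentively.
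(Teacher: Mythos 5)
Your proof is correct and follows essentially the same route as the paper: antisymmetry in the first two slots is immediate, and the first Bianchi identity is verified by writing out the cyclic sum and cancelling terms pairwise using the antisymmetry of $\varphi$ (for $\sigma_4$) and the symmetry of $\XIx$ (for $\sigma_5$). One tiny quibble with your closing remark: the coefficient $2$ in $\sigma_4$ is needed only for the Bianchi identity (\ref{eqn-1.c}); the antisymmetry (\ref{eqn-1.b}) holds for any coefficient on the leading term.
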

\begin{proof} Let $\varphi\in\Lambda^2$. Let $A_4:=\sigma_4(\varphi)$ and $A_5:=\sigma_5(\varphi)$. It is then immediate that
$A_i(x,y,z,w)=-A_i(y,x,z,w)$. We establish the Bianchi identity by computing:
\medbreak\qquad
$A_4(x,y,z,w)+A_4(y,z,x,w)+A_4(z,x,y,w)$
\par\qquad\quad$=2\varphi(x,y)\XIx(z,w)
+\varphi(x,z)\XIx( y,w)-\varphi(y,z)\XIx( x,w)$
\par\qquad\quad\phantom{.}$+2\varphi(y,z)\XIx(x,w)
+\varphi(y,x)\XIx( z,w)-\varphi(z,x)\XIx( y,w)$
\par\qquad\quad\phantom{.}$+2\varphi(z,x)\XIx( y,w)
+\varphi(z,y)\XIx( x,w)-\varphi(x,y)\XIx( z,w)=0$,
\smallbreak\qquad$A_5(x,y,z,w)+A_5(y,z,x,w)+A_5(z,x,y,w)$
\par\qquad\quad$=\varphi(x,w)\XIx( y,z)-\varphi(y,w)\XIx( x,z)$
\par\qquad\quad\phantom{.}$+\varphi(y,w)\XIx( z,x)-\varphi(z,w)\XIx( y,x)$
\par\qquad\quad\phantom{.}$+\varphi(z,w)\XIx( x,y)-\varphi(x,w)\XIx( z,y)=0$.
\end{proof}
\subsection{Higa's  decomposition of $\mathfrak{W}$}\label{sect-2.2}
We can now discuss the decomposition of $\mathfrak{W}$ as an $\mathcal{O}$ module. We reformulate a result of Higa
\cite{H94}.
\begin{theorem}\label{thm-2.6}
For $n \ge 4,$ we may decompose
$\mathfrak{W}=\mathfrak{A}\oplus\mathfrak{P}$
where $$\mathfrak{P}=\{(\sigma_4-\sigma_5)\varphi\}_{\varphi\in\Lambda^2}\,.$$
Consequently, $\mathfrak{W}\approx\mathbb{R}\oplus S_0^2\oplus\Lambda^2\oplus W_6$ as an $\mathcal{O}$ module.
\end{theorem}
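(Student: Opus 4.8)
The plan is to prove $\mathfrak{W}=\mathfrak{A}\oplus\mathfrak{P}$ by showing that the antisymmetrized Ricci tensor $\Lambda\operatorname{Ric}$ is precisely the functional measuring failure of the defining symmetry (\ref{eqn-1.f}), and then to read off the module decomposition from Theorem \ref{thm-2.2}. I would begin by contracting the two building blocks. A direct index computation, using $h^{ij}h_{jk}=\delta^i_k$, $h^{ij}h_{ij}=n$, and the antisymmetry of $\varphi$, gives $\operatorname{Ric}(\sigma_4\varphi)=-(n+1)\varphi$ and $\operatorname{Ric}(\sigma_5\varphi)=-\varphi$. Both are antisymmetric, so $\Lambda\operatorname{Ric}((\sigma_4-\sigma_5)\varphi)=\operatorname{Ric}((\sigma_4-\sigma_5)\varphi)=-n\varphi$. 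Since $n\ge4$, the map $\varphi\mapsto(\sigma_4-\sigma_5)\varphi$ is therefore injective with left inverse $-\tfrac1n\Lambda\operatorname{Ric}$; because $\sigma_4$, $\sigma_5$ and the Ricci contraction are all natural operations in $\varphi$ and the invariant metric $h$, every map in sight is $\mathcal{O}$-equivariant, so $\mathfrak{P}\cong\Lambda^2$ as an $\mathcal{O}$-module.

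Next I would verify $\mathfrak{P}\subseteq\mathfrak{W}$. The preceding Lemma already gives $(\sigma_4-\sigma_5)\varphi\in\mathfrak{R}$, so only the Weyl symmetry (\ref{eqn-1.e}) remains. Expanding $(\sigma_4-\sigma_5)\varphi(x,y,z,w)+(\sigma_4-\sigma_5)\varphi(x,y,w,z)$, the mixed terms cancel in pairs and leave $4\varphi(x,y)h(z,w)$. By the Ricci computation above this equals $-\tfrac4n\Lambda\operatorname{Ric}((\sigma_4-\sigma_5)\varphi)(x,y)\,h(z,w)$, which is the content of (\ref{eqn-1.e}); hence $(\sigma_4-\sigma_5)\varphi\in\mathfrak{W}$.

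The decomposition then splits into directness and spanning. For directness, every $A\in\mathfrak{A}$ obeys (\ref{eqn-1.f}), so its Weyl defect $A(x,y,z,w)+A(x,y,w,z)$ vanishes identically, whereas for $A=(\sigma_4-\sigma_5)\varphi\in\mathfrak{P}$ this same defect equals $4\varphi(x,y)h(z,w)$; nondegeneracy of $h$ then forces $\varphi=0$ on $\mathfrak{A}\cap\mathfrak{P}$, so $\mathfrak{A}\cap\mathfrak{P}=0$. For spanning, given $A\in\mathfrak{W}$ I would set $\psi:=-\tfrac1n\Lambda\operatorname{Ric}(A)$ and $B:=(\sigma_4-\sigma_5)\psi\in\mathfrak{P}$, so that $A-B\in\mathfrak{W}$ and $\Lambda\operatorname{Ric}(A-B)=0$; substituting this into (\ref{eqn-1.e}) makes its right-hand side vanish, which is exactly the symmetry (\ref{eqn-1.f}), so $A-B\in\mathfrak{A}$ and $A\in\mathfrak{A}+\mathfrak{P}$. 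Together these give $\mathfrak{W}=\mathfrak{A}\oplus\mathfrak{P}$, and combining with Theorem \ref{thm-2.2}(3) and $\mathfrak{P}\cong\Lambda^2$ yields $\mathfrak{W}\approx\mathbb{R}\oplus S_0^2\oplus\Lambda^2\oplus W_6$.

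I expect the only genuine friction to lie in the Ricci contractions: the coefficients $-(n+1)$ and $-1$ must emerge exactly so that $\Lambda\operatorname{Ric}|_{\mathfrak{P}}=-n\,\id$ matches the Weyl defect $4\varphi(x,y)h(z,w)$, and a slip in the $\tfrac12$ normalization implicit in $\Lambda\operatorname{Ric}(x,y)=\tfrac12\{\operatorname{Ric}(x,y)-\operatorname{Ric}(y,x)\}$ (pinned down by comparing the two forms of (\ref{eqn-1.e})) would destroy this matching. Everything else is either quoted---the Lemma and Theorem \ref{thm-2.2}(3)---or the short equivariance remark.
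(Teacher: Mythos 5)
Your proposal is correct, and its computational core is exactly the paper's: both arguments rest on the element $(\sigma_4-\sigma_5)\varphi$, the contraction $\operatorname{Ric}((\sigma_4-\sigma_5)\varphi)=-n\varphi$ (your intermediate values $-(n+1)\varphi$ and $-\varphi$ are right), and the defect identity $(\sigma_4-\sigma_5)\varphi(x,y,z,w)+(\sigma_4-\sigma_5)\varphi(x,y,w,z)=4\varphi(x,y)h(z,w)$, which matches the right-hand side of (\ref{eqn-1.e}). Where you genuinely differ is in how the decomposition $\mathfrak{W}=\mathfrak{A}\oplus\mathfrak{P}$ is completed. The paper recycles the short exact sequence $0\rightarrow\mathfrak{A}\rightarrow\mathfrak{R}\mapright{\pi_{\Lambda\otimes S}}\Lambda^2\otimes S^2\rightarrow0$ from the proof of Theorem \ref{thm-2.2}: Equation (\ref{eqn-1.e}) forces $\pi_{\Lambda\otimes S}(\mathfrak{W})\subseteq\Lambda^2\otimes\mathbb{R}\cdot h\cong\Lambda^2$, and irreducibility of $\Lambda^2$ yields the dichotomy that either $\mathfrak{W}=\mathfrak{A}$ or $\mathfrak{W}\cong\mathfrak{A}\oplus\Lambda^2$; the explicit $\mathfrak{P}$ then serves only to rule out the first alternative. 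You instead prove spanning by hand: given $A\in\mathfrak{W}$, you subtract $B=(\sigma_4-\sigma_5)\bigl(-\tfrac1n\Lambda\operatorname{Ric}(A)\bigr)\in\mathfrak{P}$, check $\Lambda\operatorname{Ric}(A-B)=0$, and read off from (\ref{eqn-1.e}) that $A-B$ satisfies (\ref{eqn-1.f}), hence lies in $\mathfrak{A}$. Your route is more elementary and self-contained: it invokes Theorem \ref{thm-2.2} only for the final identification $\mathfrak{A}\approx\mathbb{R}\oplus S_0^2\oplus W_6$, needs neither the irreducibility of $\Lambda^2$ nor any splitting/complete-reducibility consideration (a point worth caring about in indefinite signature, where $O(p,q)$ is non-compact), and it exhibits the projection onto $\mathfrak{P}$ explicitly as $A\mapsto-\tfrac1n(\sigma_4-\sigma_5)\Lambda\operatorname{Ric}(A)$, which is essentially the Higa term of Section \ref{sect-2.7}. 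The paper's route buys economy: once the exact sequence is in place, one need only produce a single non-algebraic Weyl tensor rather than decompose an arbitrary one. Your closing caution about normalizations is also well placed, and correctly resolved: with $\Lambda\theta(x,y)=\tfrac12\{\theta(x,y)-\theta(y,x)\}$ the two forms of (\ref{eqn-1.e}) agree, and $4\varphi\, h=-\tfrac4n(-n\varphi)h$ matches exactly.
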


\begin{proof}  In the proof of Theorem \ref{thm-2.2}, we constructed a short exact sequence
$$0\rightarrow\mathfrak{A}(V)\rightarrow\mathfrak{R}(V)\mapright{\pi_{\Lambda\otimes S}}\Lambda^2\otimes S^2\rightarrow0\,.$$
Equation (\ref{eqn-1.e}) then implies that
$\pi_{\Lambda\otimes S}(A)$ takes values in $\Lambda^2\otimes\mathbb{R}\cdot\XIx$. It is immediate
from the definition that $\mathfrak{A}\subset\mathfrak{W}$, and thus either
$\mathfrak{W}=\mathfrak{A}$ or $\mathfrak{W}$ is isomorphic to $\mathfrak{A}\oplus\Lambda^2$ as an $\mathcal{O}$ module. We argue
that this latter possibility pertains. 
If
$\varphi\in\Lambda^2(V)$, define:
$$
\begin{array}{l}
A_{ijkl}:=(\sigma_4\varphi-\sigma_5\varphi)_{ijkl}\\
\qquad\ \ =2\varphi_{ij} h_{kl}+\varphi_{ik} h_{jl}-\varphi_{jk} h_{il}
   -\varphi_{il} h_{jk}+\varphi_{jl} h_{ik}\,.
\end{array}$$
We define the Ricci type components $A_{jk} := \operatorname{Ric}(A)_{jk} := h^{il}A_{ijkl}$ \, and compute:
\medbreak\qquad
$A_{jk}= \XIx^{il}\{2\varphi_{ij} \XIx_{kl}+\varphi_{ik} \XIx_{jl} - \varphi_{jk} \XIx_{il} - \varphi_{il} \XIx_{jk}
+
\varphi_{jl}\XIx_{ik}\}$
\smallbreak\qquad\qquad
$=2\varphi_{kj} + \varphi_{jk} - n\varphi_{jk} + \varphi_{jk}= - n\varphi_{jk}$,
\smallbreak\qquad
$A_{ijkl} + A_{ijlk} = 4\,\varphi_{ij}\, \XIx_{kl} = \textstyle\frac2n\,(A_{ji} - A_{ij})
\,h_{kl}$.
\end{proof}

Consider Higa's decomposition $\mathfrak{W}=\mathfrak{A}\oplus\mathfrak{P}$ from above and the orthogonal projections $\pi_{\mathfrak{A}}: \mathfrak{W} \rightarrow \mathfrak{A}$ and  $\pi_{\mathfrak{P}}: \mathfrak{W} \rightarrow \mathfrak{P}$.
For $A \in \mathfrak{W}$ we call $H(A): = \pi_{\mathfrak{P}}(A)$
the {\it Higa term  of $A$}. 
 In Section \ref{sect-2.7} we will relate $H(A)$ with the  decompositions that we study in the next Sections \ref{sect-2.3} -
\ref{sect-2.5}.
\subsection{The $A-$decomposition  and the $W-$decomposition
for $\mathfrak{W}$}\label{sect-2.3}
In Theorem \ref{thm-2.2}, we identified the decomposition factors of $\mathfrak{R}$ as an
$\mathcal{O}$ module. However, since the modules $S_0^2$ and $\Lambda^2$ both appear with multiplicity $2$, the decomposition of
$\mathfrak{R}$ is not unique. In
\cite{GNU09} we studied  decompositions of
$\mathfrak{R}$
in some detail and,
following the discussion in \cite{B90}, presented two different possibilities 
for the  decomposition of $\mathfrak{R}$:
$$\bigoplus_{i=1}^8A_i = \mathfrak{R} = \bigoplus_{i=1}^8W_i\,.$$
We denote the corresponding orthogonal projections by
$$\alpha_i: \mathfrak{R} \rightarrow A_i \quad \text{and} \quad
\pi_i: \mathfrak{R} \rightarrow W_i\,.$$

Let $p(A):=\mathfrak{R}\cap\ker(\operatorname{Ric})$ be the space of {\it projective curvature tensors}. While the {\it
$A$-decomposition} also gives rise to a decomposition of $\mathfrak{A}$, it does not induce a decomposition of $p(A)$. On the
other hand the {\it $W$-decomposition} induces a decomposition of $p(A)$ but does not induce a decomposition of $\mathfrak{A}$.
Thus both decompositions are important in the geometric study of manifolds and their curvature properties. Again we emphasize that
this is possible because both $S_0^2$ and $\Lambda^2$ appear with multiplicity $2$ in the decomposition of $\mathfrak{R}$;
thus identifying the exact subspace of $\mathfrak{R}$ which is isomorphic to $S_0^2$ or to $\Lambda^2$ in the submodules
$\mathfrak{A}$,
$p(A)$, and
$\mathfrak{W}$ is crucial.
We follow the discussion in 
\cite{GNU09} of these two inequivalent decompositions and evaluate them for Weyl manifolds.
We begin by establishing some useful notational conventions:
\begin{definition}
\rm
Let $\theta_1$ and $\theta_2$ be bilinear forms.
\begin{enumerate}
\item Define
$(\theta_1\cdot\theta_2)(x,y,z,w):=\theta_1(x,y)\theta_2(z,w)$.
\item Define $(\theta_1 \wedge_r\theta_2) (x,y,z,w):=
\theta_1(x,z)\theta_2(y,w) - \theta_1(y,z)\theta_2(x,w)$
\smallbreak\qquad
$-r[\theta_1(x,w)\theta_2(y,z) - \theta_1(y,w)\theta_2(x,z)]$ for  $r \in \mathbb{N}$,    and $\wedge:=\wedge_0$.
\item Define mappings $\psi$ and $\mu$ from 
$\otimes^4V^*$ to $\otimes^4V^*$  by setting
\begin{eqnarray*}
&&4\psi (A)(x,y,z,w) :=A(x,y,z,w) + A(y,x,w,z)\\
&&\qquad + 
A(z,w,x,y) + A(w,z,y,x),\quad\text{and}\\
&& 8 \mu (A)(x,y,z,w) := 3 A(x,y,z,w) + 3 A(x,y,w,z)\\
&&\qquad+A(x,w,z,y) +A (x,z,w,y) + A (w,y,z,x) + 
A(z,y,w,x)\,.
\end{eqnarray*}
\end{enumerate}
\end{definition}
\begin{remark}\rm\ \begin{enumerate}
\item Note $\theta_1 \wedge_k\theta_1 = (k+1)\theta_1 \wedge\theta_1$.
\item $\theta_1\wedge_r\theta_2\, + \, \theta_1 \wedge_s \theta_2 = \theta_1 \wedge \theta_2 \,+ \, \theta_1 \wedge_{r+s}
\theta_2$.
\end{enumerate}\end{remark}
\subsection{The $A-$decomposition  for $\mathfrak{W}$}
Recall that $\alpha_i$ is orthogonal
projection on the component $A_i$.
\begin{lemma}\label{lem-2.9}
If $A \in \mathfrak{W}$, then:
\begin{enumerate}
\item $\alpha_1({A}) =-\tfrac {1}{n(n-1)}\tau_h \,  \XIx \wedge  \XIx$.
\medbreak\item
$\alpha_2({A}) = \tfrac{-1}{2(n-2)} \,S(\operatorname{Ric} +\operatorname{Ric}^\star)\wedge_1 \XIx + \tfrac{2
}{n(n-2)}\tau_h
\,  \XIx \wedgeo  \XIx$
\smallbreak
$\phantom{...A}= - \,\tfrac{1}{n-2}
\left(S\operatorname{Ric} \wedge_1 \XIx\right)  + \tfrac{2 }{n(n-2)}\tau_h \XIx \wedge \XIx$.
\medbreak\item
$\alpha_3({A}) = - \, \tfrac{1}{2n}\, S(\operatorname{Ric} - \operatorname{Ric}^\star)\wedge_{-1} \XIx = 0$.
\medbreak\item
$\alpha_4({A}) = - \, \tfrac{1}{4(n+2)}\,  [2\Lambda (3 \operatorname{Ric} -  \operatorname{Ric}^\star)
\cdot \XIx + 
\Lambda (3 \operatorname{Ric} -  \operatorname{Ric}^\star) \wedge_{-1} \XIx] $
\smallbreak
$\phantom{...A} = - \, \tfrac{1}{2n}\, \left(2(\Lambda \operatorname{Ric})\cdot \XIx + (\Lambda \operatorname{Ric})\wedge_{-1}\XIx
\right)$.
\medbreak\item
$\alpha_5({A}) = - \, \tfrac{1}{4(n-2)} \, [2\Lambda (\operatorname{Ric} + \operatorname{Ric}^\star) \cdot \XIx + 
\Lambda ( \operatorname{Ric}  + \operatorname{Ric}^\star) \wedge_3 \XIx] $
\smallbreak
$\phantom{...A}= - \, \tfrac{1}{2n} \, \left(2(\Lambda \operatorname{Ric})\cdot \XIx + (\Lambda \operatorname{Ric})\wedge_{3}\XIx
\right)$.
\medbreak\item
$\alpha_6({A}) = \psi({A}) - \alpha_1({A}) 
- \alpha_2({A}) $
\smallbreak
$ = A + \tfrac{2}{n}\, (\Lambda \operatorname{Ric})\cdot \XIx + 
\tfrac{1}{n}\, (\Lambda \operatorname{Ric})\wedge_1\XIx + \tfrac{1}{n-2}\,(S\operatorname{Ric})\wedge_1\XIx -
\tfrac{1}{(n-1)(n-2)}\tau_h\XIx
\wedge \XIx$. 
\medbreak\item
$\alpha_7({A})  = 0$.
\medbreak\item
$\alpha_8({A})  = 0$.
\end{enumerate}
\end{lemma}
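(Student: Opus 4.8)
The plan is to read the eight formulas as the specialization to $\mathfrak{W}$ of the universal projection formulas that hold on all of $\mathfrak{R}(V)$. Following \cite{GNU09}, each orthogonal projection $\alpha_i$ is given on $\mathfrak{R}$ by a fixed expression in $S\operatorname{Ric}$, $\Lambda\operatorname{Ric}$, $S\operatorname{Ric}^\star$, $\Lambda\operatorname{Ric}^\star$, $\tau_h$, and (for $i=6$) in $A$ itself; these are precisely the first of the two displayed expressions in Assertions (1)--(8). Since $\mathfrak{W}\subset\mathfrak{R}$ they hold verbatim for $A\in\mathfrak{W}$, so the whole task is to feed in the three Weyl identities of Theorem \ref{thm-1.1},
\begin{equation*}
S\operatorname{Ric}^\star=S\operatorname{Ric},\qquad \Lambda\operatorname{Ric}^\star=\tfrac{n-4}{n}\,\Lambda\operatorname{Ric},\qquad \tau_h=\operatorname{Tr}_h\operatorname{Ric}^\star,
\end{equation*}
and to simplify.

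First I would establish the vanishing statements (3), (7), (8) structurally. By the proof of Theorem \ref{thm-2.6}, Equation (\ref{eqn-1.e}) forces $\mathfrak{W}=\mathfrak{A}\oplus\mathfrak{P}$ with Higa space $\mathfrak{P}\approx\Lambda^2$, so each $A\in\mathfrak{W}$ splits as $A=\psi(A)+H(A)$ with $\psi(A)\in\mathfrak{A}$ and $H(A)\in\mathfrak{P}$. As $\mathfrak{A}=A_1\oplus A_2\oplus A_6$ is the sub-sum of the orthogonal decomposition $\mathfrak{R}=\bigoplus_i A_i$ inducing the decomposition of $\mathfrak{A}$, we have $\mathfrak{A}\perp A_i$ for $i\in\{3,4,5,7,8\}$; hence $\alpha_i(A)=\alpha_i(H(A))$ for those $i$. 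Since $H(A)$ lies in a copy of $\Lambda^2$, Schur's lemma makes $\alpha_i(H(A))=0$ whenever $A_i$ is $\mathcal{O}$-inequivalent to $\Lambda^2$, that is for $i=3,7,8$; this gives (3), (7), (8). Equivalently, (3) is immediate from its universal form, a multiple of $S(\operatorname{Ric}-\operatorname{Ric}^\star)\wedge_{-1}\XIx$, which dies under $S\operatorname{Ric}^\star=S\operatorname{Ric}$.

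For the surviving lower-order terms the substitution is routine. In (2), $S(\operatorname{Ric}+\operatorname{Ric}^\star)=2S\operatorname{Ric}$ converts $-\tfrac1{2(n-2)}$ into $-\tfrac1{n-2}$. In (4) and (5) the only genuine arithmetic is short:
\begin{equation*}
\Lambda(3\operatorname{Ric}-\operatorname{Ric}^\star)=\tfrac{2(n+2)}{n}\,\Lambda\operatorname{Ric},\qquad \Lambda(\operatorname{Ric}+\operatorname{Ric}^\star)=\tfrac{2(n-2)}{n}\,\Lambda\operatorname{Ric},
\end{equation*}
whereupon the prefactors $\tfrac1{4(n+2)}$ and $\tfrac1{4(n-2)}$ cancel against $2(n+2)$ and $2(n-2)$ to leave the common coefficient $\tfrac1{2n}$. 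For (1) I would note that $\mathbb{R}$ occurs with multiplicity one in $\mathfrak{R}$, so $\alpha_1(A)$ is the unique multiple of $\XIx\wedge\XIx$ carrying the scalar curvature, namely $-\tfrac1{n(n-1)}\tau_h\,\XIx\wedge\XIx$; the identity $\tau_h=\operatorname{Tr}_h\operatorname{Ric}^\star$ makes this scalar unambiguous.

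The conformal component (6) is the one real computation. I would start from its given form $\alpha_6=\psi(A)-\alpha_1(A)-\alpha_2(A)$ and evaluate $\psi$. A direct check shows $\psi$ fixes $\mathfrak{A}$ and annihilates $\mathfrak{P}$, so on $\mathfrak{W}$ it is the projection onto $\mathfrak{A}$ and $\psi(A)=A-H(A)$. Writing $H(A)=(\sigma_4-\sigma_5)\varphi$ and using $\operatorname{Ric}((\sigma_4-\sigma_5)\varphi)=-n\varphi$ from the proof of Theorem \ref{thm-2.6}, together with the symmetry of the Ricci tensor of the algebraic tensor $\psi(A)$, one finds that the whole antisymmetric Ricci of $A$ is carried by $H(A)$, so $\varphi=-\tfrac1n\Lambda\operatorname{Ric}$ and
\begin{equation*}
\psi(A)=A+\tfrac1n(\sigma_4-\sigma_5)(\Lambda\operatorname{Ric})=A+\tfrac2n(\Lambda\operatorname{Ric})\cdot\XIx+\tfrac1n(\Lambda\operatorname{Ric})\wedge_1\XIx.
\end{equation*}
Substituting the already-computed $\alpha_1,\alpha_2$ and collecting the $\tau_h\,\XIx\wedge\XIx$ terms through $\tfrac1{n(n-1)}-\tfrac2{n(n-2)}=-\tfrac1{(n-1)(n-2)}$ then yields the stated expression. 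The step I expect to be the main obstacle is exactly this evaluation of $\psi(A)$ for a \emph{non-algebraic} Weyl tensor, where both the failure of the pair symmetry $A_{ijkl}=A_{klij}$ and of Equation (\ref{eqn-1.f}) enter; one must correctly pin down the Higa term and then verify that the resulting $W_6$-part meets the defining trace condition $\XIx^{il}A_{ijkl}=0$. Everything else reduces to a one-line substitution from Theorem \ref{thm-1.1} or to the orthogonality/Schur argument above.
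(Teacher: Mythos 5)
Your proposal is correct and follows essentially the route the paper itself takes: the paper states Lemma \ref{lem-2.9} without proof, implicitly importing the universal projection formulas of \cite{GNU09} (the first displayed expression in each assertion) and specializing them to $\mathfrak{W}$ via the identities $S\operatorname{Ric}^\star=S\operatorname{Ric}$, $\Lambda\operatorname{Ric}^\star=\tfrac{n-4}{n}\Lambda\operatorname{Ric}$, $\tau_h=\operatorname{Tr}_h\operatorname{Ric}^\star$ of Theorem \ref{thm-1.1}, which is exactly your plan, and your arithmetic ($\Lambda(3\operatorname{Ric}-\operatorname{Ric}^\star)=\tfrac{2(n+2)}{n}\Lambda\operatorname{Ric}$, $\Lambda(\operatorname{Ric}+\operatorname{Ric}^\star)=\tfrac{2(n-2)}{n}\Lambda\operatorname{Ric}$, and the coefficient collection $\tfrac{1}{n(n-1)}-\tfrac{2}{n(n-2)}=-\tfrac{1}{(n-1)(n-2)}$) checks out. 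Your supplementary structural arguments --- Schur's lemma for the vanishing of $\alpha_3,\alpha_7,\alpha_8$ on $\mathfrak{A}\oplus\mathfrak{P}$, and the identification $\psi(A)=A-H(A)$ with $H(A)=-\tfrac1n\bigl(2(\Lambda\operatorname{Ric})\cdot\XIx+(\Lambda\operatorname{Ric})\wedge_1\XIx\bigr)$ --- are sound and agree with what the paper records independently in Section \ref{sect-2.7}.
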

\begin{remark}\rm
Let $A \in \mathfrak{W}$. In analogy to the Schouten tensor in conformal
semi-Riemannian geometry introduce the symmetric 
{\it Weyl-Schouten tensor} by setting:
$$ \sigma := \tfrac{1}{n-2}\, \left [ S\, \operatorname{Ric} - \tfrac{1}{2(n-1)} \cdot \tau_h  \XIx \right ].
$$
We then have $ \alpha_6(A) = A + \sigma \wedge_1 \XIx - (\alpha_4(A) + \alpha_5(A))$.
This decomposition extends the well known  decomposition of the 
Weyl conformal curvature tensor in 
semi-Riemannian geometry; note that $\alpha_6(A)$ and also
$\sigma \wedge_1 \XIx$ are algebraic curvature tensors.
We may express
$\alpha_2({A}) =  \,- \,  \sigma \wedge_1 \, \XIx\,+\,\tfrac{1}{n(n-1)}\tau_h \XIx \wedge \XIx$.
\end{remark}
Lemma \ref{lem-2.9} implies:
\begin{lemma}
If $A \in \mathfrak{W}$ then:
\begin{enumerate}
\item $\operatorname{Ric}(\alpha_1({A})) = \operatorname{Ric}^\star(\alpha_1(A))=\tfrac{1}{n}\tau_h\XIx$.
\vspace{1mm}
\item $\operatorname{Ric}(\alpha_2({A})) = \operatorname{Ric}^\star(\alpha_2(A))=-\tfrac{1}{n}\tau_h\XIx +  S(\operatorname{Ric})$.
\vspace{1mm}
\item $\operatorname{Ric}(\alpha_4({A})) = -\operatorname{Ric}^\star(\alpha_4(A))= \tfrac{n+2}{2n} \Lambda(\operatorname{Ric})$.
\vspace{1mm}
\item $\operatorname{Ric}(\alpha_5({A})) = \frac13\operatorname{Ric}^\star(\alpha_5(A))=
\tfrac{n-2}{2n}\Lambda(\operatorname{Ric})$.
\vspace{1mm}
\item $ \operatorname{Ric}(\alpha_j({A})) = \operatorname{Ric}^\star(\alpha_j({A})) = 0$  for  $j = 3,6,7,8$.
\end{enumerate}
\end{lemma}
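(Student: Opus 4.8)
The plan is to reduce every assertion to a short dictionary of contraction rules for the elementary tensors occurring in Lemma \ref{lem-2.9} and then to substitute the explicit expressions given there. Recall that $\operatorname{Ric}(A)_{jk}=\XIx^{il}A_{ijkl}$ and $\operatorname{Ric}^\star(A)_{jk}=\XIx^{ab}A_{jabk}$. First I would record, for an arbitrary bilinear form $\theta$ with trace $\Tr_\XIx\theta:=\XIx^{il}\theta_{il}$, the identities
$$\operatorname{Ric}(\theta\cdot\XIx)_{jk}=\theta_{kj},\qquad \operatorname{Ric}^\star(\theta\cdot\XIx)_{jk}=\theta_{jk},$$
$$\operatorname{Ric}(\theta\wedge_r\XIx)=(1-n+r)\,\theta-r\,(\Tr_\XIx\theta)\,\XIx,\qquad \operatorname{Ric}^\star(\theta\wedge_r\XIx)=(1-r(n-1))\,\theta-(\Tr_\XIx\theta)\,\XIx.$$
Each follows by contracting the defining formulas for $\theta_1\cdot\theta_2$ and $\theta_1\wedge_r\theta_2$ and using $\XIx^{il}\XIx_{kl}=\delta^i_k$ and $\XIx^{il}\XIx_{il}=n$; the special case $\theta=\XIx$, $r=0$ yields $\operatorname{Ric}(\XIx\wedge\XIx)=\operatorname{Ric}^\star(\XIx\wedge\XIx)=-(n-1)\XIx$. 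The only facts I need about the inputs are that $S\operatorname{Ric}$ is symmetric with $\Tr_\XIx(S\operatorname{Ric})=\tau_h$, while $\lambda:=\Lambda\operatorname{Ric}$ is trace-free with $\lambda_{kj}=-\lambda_{jk}$.

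With this dictionary the assertions for $\alpha_1,\alpha_2,\alpha_4,\alpha_5$ are one-line substitutions. For example, from $\alpha_4(A)=-\tfrac1{2n}[2\lambda\cdot\XIx+\lambda\wedge_{-1}\XIx]$ the rules give $\operatorname{Ric}(\lambda\cdot\XIx)=-\lambda$ and $\operatorname{Ric}(\lambda\wedge_{-1}\XIx)=-n\lambda$, whence $\operatorname{Ric}(\alpha_4(A))=-\tfrac1{2n}(-2\lambda-n\lambda)=\tfrac{n+2}{2n}\lambda$; the parallel $\operatorname{Ric}^\star$ computation changes the two relevant signs and produces $-\tfrac{n+2}{2n}\lambda$. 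The cases $\alpha_1$ and $\alpha_5$ are identical in spirit, the factor $\tfrac13$ in the $\alpha_5$ statement arising because for $r=3$ one has $\operatorname{Ric}(\lambda\wedge_3\XIx)=(4-n)\lambda$ but $\operatorname{Ric}^\star(\lambda\wedge_3\XIx)=(4-3n)\lambda$. In $\alpha_2$ the $S\operatorname{Ric}$ and $\tau_h\XIx$ coefficients recombine into $S\operatorname{Ric}-\tfrac1n\tau_h\XIx$ after the elementary simplifications $-\tfrac1{n-2}(2-n)=1$ and $\tfrac1{n-2}-\tfrac{2(n-1)}{n(n-2)}=-\tfrac1n$; here $\operatorname{Ric}$ and $\operatorname{Ric}^\star$ coincide because the two wedge coefficients agree when $r=1$. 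The cases $j=3,7,8$ of the final assertion are immediate, since Lemma \ref{lem-2.9} gives $\alpha_3(A)=\alpha_7(A)=\alpha_8(A)=0$, the vanishing of $\alpha_3$ resting on $S\operatorname{Ric}^\star=S\operatorname{Ric}$ from Theorem \ref{thm-1.1}.

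The one case that genuinely uses the full strength of Theorem \ref{thm-1.1}, and which I expect to be the main (if mild) obstacle, is $\alpha_6$. Since $\alpha_6(A)$ is a sum of five terms one of which is $A$ itself, I must feed in both $\operatorname{Ric}(A)=\operatorname{Ric}=S\operatorname{Ric}+\lambda$ and $\operatorname{Ric}^\star(A)=\operatorname{Ric}^\star=S\operatorname{Ric}+\tfrac{n-4}{n}\lambda$, the latter via $S\operatorname{Ric}^\star=S\operatorname{Ric}$ and $\Lambda\operatorname{Ric}^\star=\tfrac{n-4}{n}\Lambda\operatorname{Ric}$ of Theorem \ref{thm-1.1}. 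Applying the dictionary to the four remaining terms and summing, the $S\operatorname{Ric}$-contributions cancel and the $\tau_h\XIx$-contributions cancel, while the $\lambda$-contributions cancel because $1-\tfrac2n+\tfrac{2-n}{n}=0$ in the $\operatorname{Ric}$ computation and $\tfrac{n-4}{n}+\tfrac2n+\tfrac{2-n}{n}=0$ in the $\operatorname{Ric}^\star$ computation. It is precisely this last identity that forces the coefficient $\tfrac{n-4}{n}$: any other value of $\Lambda\operatorname{Ric}^\star$ would leave a nonzero multiple of $\Lambda\operatorname{Ric}$ and spoil the vanishing of $\operatorname{Ric}^\star(\alpha_6(A))$. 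Thus $\operatorname{Ric}(\alpha_6(A))=\operatorname{Ric}^\star(\alpha_6(A))=0$, which completes the verification.
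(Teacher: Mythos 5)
Your proposal is correct and takes essentially the same route as the paper: the paper obtains this lemma by contracting the explicit formulas of Lemma \ref{lem-2.9} (it states the result as an immediate consequence of that lemma), which is exactly what your contraction dictionary for $\theta\cdot\XIx$ and $\theta\wedge_r\XIx$ carries out, and your coefficient identities (including the use of $S\operatorname{Ric}^\star=S\operatorname{Ric}$ and $\Lambda\operatorname{Ric}^\star=\tfrac{n-4}{n}\Lambda\operatorname{Ric}$ from Theorem \ref{thm-1.1} in the $\alpha_3$ and $\alpha_6$ cases) all check out.
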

\subsection{The $W-$decomposition  for $\mathfrak{W}$}\label{sect-2.5}
Recall that $\pi_i$ is orthogonal projection on the component $W_i$.
\begin{lemma}
If $A \in \mathfrak{W}$ then:
\begin{enumerate}
\item $\pi_1(A) =\tfrac {1}{n(n-1)}\tau_h \XIx \wedgeo  \XIx$.
\smallbreak\item $\pi_2(A) = \tfrac {1}{n-1}\, [\frac{1}{n}\tau_h \XIx -S\operatorname{Ric}] \wedgeo  \XIx$.
\smallbreak\item 
$\pi_3(A) = - \, \tfrac {1}{n+1}\,  [ 2 \Lambda \operatorname{Ric}  \cdot \XIx + \Lambda \operatorname{Ric}
 \wedgeo  \,\XIx]$.
\smallbreak\item $\pi_4(A) = - \,
 \tfrac {1}{n^2-4}\,[2 \Lambda \operatorname{Ric}^{*} \cdot \XIx 
 + 
\Lambda \operatorname{Ric}^{*} \wedge_{n+1} \XIx ]$
\medbreak
$\, -\tfrac {3}{(n^2 - 4)(n+1)}\, [2 \Lambda \operatorname{Ric}  \cdot \XIx 
+ \Lambda \operatorname{Ric} \wedge_{n+1} \XIx ]$
\medbreak
$ = \, -\tfrac {1}{n(n+1)}\,\left[2(\Lambda \operatorname{Ric})\cdot \XIx +  \Lambda \operatorname{Ric}
\wedge_{n+1}\XIx \right]$.
\medbreak\item
$\pi_5(A) =  \tfrac {1}{(n-1)(n-2)}\,[\tau_h \XIx \wedge  \XIx
-\tfrac{1}{n}\, S(\operatorname{Ric} + (n-1)\operatorname{Ric}^\star) \wedge_{n-1} \XIx]$
\medbreak
$= \, \tfrac {1}{(n-1)(n-2)}\;[\tau_h \XIx \wedge  \XIx - S \operatorname{Ric} \wedge_{n-1} \XIx]$.
\medbreak\item
$\pi_6(A) 
=\psi(A) + \tfrac{1}{2(n-2)} \, S(\operatorname{Ric} + \operatorname{Ric}^\star)\wedge_1 \XIx
- \tfrac{1}{(n-1)(n-2)}\tau_h\XIx \wedgeo  \XIx$
\medbreak
$ = \, A + \tfrac{2}{n}\, (\Lambda \operatorname{Ric})\cdot \XIx + \tfrac{1}{n}\,
(\Lambda \operatorname{Ric}) \wedge_1 \XIx + \tfrac{1}{n-2}\, S \operatorname{Ric} \wedge_1 \XIx 
- \tfrac {1}{(n-1)(n-2)}\tau_h \XIx  \wedge \XIx$.
\medbreak\item
$\pi_7(A)= \mu(R) +\tfrac{1}{2n}\,S(\operatorname{Ric} -  \operatorname{Ric}^\star)\wedge_{-1} \XIx +\tfrac{1}{2(n+2)}\,\Lambda(3
\operatorname{Ric} - \operatorname{Ric}^\star)\cdot \XIx $
\medbreak
$+
\tfrac{1}{4(n+2)}\,\Lambda(3 \operatorname{Ric} - \operatorname{Ric}^\star)\wedge_{-1} \XIx = 0$.
\medbreak\item
$ \pi_8(A)= A - \psi(A) - \mu(A) +
\tfrac{1}{2(n-2)}\,\Lambda(\operatorname{Ric} +  \operatorname{Ric}^\star)\cdot \XIx $
\medbreak\qquad\qquad\qquad $+\tfrac{1}{4(n-2)}\,\Lambda(\operatorname{Ric} +  \operatorname{Ric}^\star)\wedge_{3} \XIx= 0$.
\end{enumerate}\end{lemma}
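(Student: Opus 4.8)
The plan is to obtain the eight formulas by specializing the universal $W$-decomposition of $\mathfrak{R}$ from \cite{GNU09} to the Weyl case, using the symmetries recorded in Theorem \ref{thm-1.1}. In \cite{GNU09} each projection $\pi_i\colon\mathfrak{R}\to W_i$ was written out for arbitrary $A\in\mathfrak{R}$ as an expression in the universal operators $\psi(A)$ and $\mu(A)$ together with the Ricci data $S\operatorname{Ric}$, $S\operatorname{Ric}^\star$, $\Lambda\operatorname{Ric}$, $\Lambda\operatorname{Ric}^\star$, and $\tau_h$, assembled by the products $\cdot$, $\wedge$, and $\wedge_r$. Where a single line is displayed it is already this general expression; in the cases with two lines the first is the general formula and the second its Weyl reduction. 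Thus the whole Lemma reduces to inserting the Weyl relations and simplifying.

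First I would record the three identities that Theorem \ref{thm-1.1} supplies for $A\in\mathfrak{W}$:
$$
S\operatorname{Ric}^\star=S\operatorname{Ric},\qquad
\Lambda\operatorname{Ric}^\star=\tfrac{n-4}{n}\,\Lambda\operatorname{Ric},\qquad
\operatorname{Tr}_h\operatorname{Ric}^\star=\tau_h.
$$
Feeding $S\operatorname{Ric}^\star=S\operatorname{Ric}$ into $\pi_1,\pi_2,\pi_5,\pi_6$ collapses every symmetric combination to one in $S\operatorname{Ric}$ alone; for instance in (5), $\tfrac{1}{n}S(\operatorname{Ric}+(n-1)\operatorname{Ric}^\star)=\tfrac{1}{n}\bigl(S\operatorname{Ric}+(n-1)S\operatorname{Ric}\bigr)=S\operatorname{Ric}$, which is exactly the passage from the first to the second displayed line. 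The antisymmetric cases $\pi_3,\pi_4$ use $\Lambda\operatorname{Ric}^\star=\tfrac{n-4}{n}\Lambda\operatorname{Ric}$: in (4) the two bracketed blocks become proportional and the scalar coefficient collapses because
$$
-\frac{1}{n^2-4}\cdot\frac{n-4}{n}-\frac{3}{(n^2-4)(n+1)}
=-\frac{(n-4)(n+1)+3n}{n(n+1)(n^2-4)}=-\frac{1}{n(n+1)},
$$
by the identity $(n-4)(n+1)+3n=n^2-4$. This is the representative nontrivial step among cases (1)--(6).

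For the vanishing assertions $\pi_7(A)=\pi_8(A)=0$ I would argue representation-theoretically rather than grind through the substitution. By Higa's Theorem \ref{thm-2.6}, $\mathfrak{W}\approx\mathbb{R}\oplus S_0^2\oplus\Lambda^2\oplus W_6$ as an $\mathcal{O}$ module, so $\mathfrak{W}$ contains no summand isomorphic to $W_7$ or to $W_8$. Since the modules of Theorem \ref{thm-2.2}(1) are irreducible and pairwise inequivalent, Schur's lemma forces every $\mathcal{O}$-equivariant map $\mathfrak{W}\to W_7$ and $\mathfrak{W}\to W_8$ to vanish; in particular $\pi_7$ and $\pi_8$ restrict to zero on $\mathfrak{W}$. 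As an independent check one may instead insert the three Weyl relations into the first lines of (7) and (8) and verify that all terms cancel.

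The main obstacle I anticipate is bookkeeping rather than conceptual content. The operators $\psi$ and $\mu$ enter cases (6), (7), and (8), and their values on $A\in\mathfrak{W}$ must be rewritten through $A$ itself plus $\Lambda\operatorname{Ric}$ corrections by invoking the Weyl symmetry of Equation (\ref{eqn-1.e}); comparing the two lines of (6) already exhibits the needed reduction $\psi(A)=A+\tfrac{2}{n}(\Lambda\operatorname{Ric})\cdot\XIx+\tfrac{1}{n}(\Lambda\operatorname{Ric})\wedge_1\XIx$, which itself follows from (\ref{eqn-1.e}). Meanwhile the wedge-degree parameter shifts among $\cdot$, $\wedge$, $\wedge_{-1}$, $\wedge_1$, $\wedge_3$, and $\wedge_{n+1}$ as one passes between modules. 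Keeping at hand the recombination rule $\theta\wedge_r\theta'+\theta\wedge_s\theta'=\theta\wedge\theta'+\theta\wedge_{r+s}\theta'$ from the Remark preceding the $A$-decomposition is what makes these shifts tractable, and it is the consistent application of this rule, together with the $\psi$ and $\mu$ reductions, rather than any isolated difficulty, that I expect to dominate the work.
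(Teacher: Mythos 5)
Your proposal is correct and follows essentially the same route as the paper: the Lemma is presented there without a separate proof, as the result of evaluating the general $W$-decomposition formulas of \cite{GNU09} on $\mathfrak{W}$ via the relations of Theorem \ref{thm-1.1} together with Equation (\ref{eqn-1.e}) (the latter giving the reductions of $\psi(A)$ and $\mu(A)$), which is exactly your substitution scheme, and your coefficient checks for items (4) and (5) are the right ones. The only genuine divergence is your Schur-lemma argument for $\pi_7(A)=\pi_8(A)=0$: this is a valid and non-circular shortcut, since Theorem \ref{thm-2.6} is established independently of the present Lemma and $W_7$, $W_8$ occur with multiplicity one and are inequivalent to $\mathbb{R}$, $S_0^2$, $\Lambda^2$, $W_6$; it buys you exemption from the one computation the paper's implicit route does require, namely expressing $\mu(A)$ for $A\in\mathfrak{W}$ through $\Lambda\operatorname{Ric}$ and $\XIx$ by combining Equation (\ref{eqn-1.e}) with the Bianchi identity.
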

\begin{remark}\rm
 If $A \in \mathfrak{W}$ satisfies $\operatorname{Ric}(A) = 0$ 
 then
$\pi_i(A) = 0$ for $i \ne 6$ and $\pi_6(A) = A$.
\end{remark}
\begin{lemma}
If $A \in \mathfrak{W}$  then:
\begin{enumerate}
\smallbreak\item $\operatorname{Ric}(\pi_1({A})) = \operatorname{Ric}^\star(\pi_1(A))=\tfrac{1}{n}\tau_h\XIx$.
\smallbreak\item $\operatorname{Ric}(\pi_2({A})) = S \operatorname{Ric} - \tfrac{1}{n}\tau_h \XIx= -(n-1)
\operatorname{Ric}^\star(\pi_2(A))$.
\smallbreak\item $\operatorname{Ric}(\pi_3({A})) = \Lambda \operatorname{Ric} 
= - \,\tfrac{n+1}{3} \operatorname{Ric}^\star(\pi_3(A))$.
\smallbreak\item $\operatorname{Ric}(\pi_4({A})) = 0$.
\smallbreak\item $\operatorname{Ric}^\star(\pi_4(A)) = \tfrac{(n-2)(n+2)}{n(n+1)}\Lambda \operatorname{Ric}$. 
\smallbreak\item $\operatorname{Ric}(\pi_5({A})) = 0$.
\smallbreak\item $\operatorname{Ric}^\star(\pi_5({A})) = \tfrac{1}{n-1} (n S \operatorname{Ric} - \tau_h\XIx) =
\tfrac{n}{n-1}\operatorname{Ric}(\pi_2({A})) = - n \operatorname{Ric}^\star(\pi_2(A))$.
\smallbreak\item $\operatorname{Ric}(\pi_6({A})) = 0$. 
\smallbreak\item $\operatorname{Ric}^\star(\pi_6({A})) = 0$.
\end{enumerate}
\end{lemma}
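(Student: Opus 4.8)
The plan is to exploit linearity. The preceding Lemma expresses each projection $\pi_i(A)$ as an explicit linear combination of $A$ itself together with the three elementary building blocks $\tau_h\,\XIx\wedge\XIx$, $\theta\cdot\XIx$, and $\theta\wedge_r\XIx$, where $\theta$ ranges over the Ricci-type forms $S\operatorname{Ric}$, $\Lambda\operatorname{Ric}$, $\operatorname{Ric}$, $\operatorname{Ric}^\star$. Since $\operatorname{Ric}(\cdot)$ and $\operatorname{Ric}^\star(\cdot)$ are linear operators on $\otimes^4V^*$, it suffices to compute their values on these building blocks once and for all and then substitute. The definitions give $\operatorname{Ric}(A)=\operatorname{Ric}$ and $\operatorname{Ric}^\star(A)=\operatorname{Ric}^\star$ directly, so the only genuine computation concerns the building blocks.

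First I would record the contraction identities. Writing $\operatorname{Ric}(B)_{jk}=\XIx^{il}B_{ijkl}$ and $\operatorname{Ric}^\star(B)_{jk}=\XIx^{il}B_{jilk}$, a short index computation yields, for an arbitrary bilinear form $\theta$ with $h$-trace $\operatorname{Tr}_h\theta$,
\begin{eqnarray*}
&&\operatorname{Ric}(\theta\cdot\XIx)=\theta^{t},\qquad \operatorname{Ric}^\star(\theta\cdot\XIx)=\theta,\\
&&\operatorname{Ric}(\theta\wedge_r\XIx)=(1-n+r)\theta-r(\operatorname{Tr}_h\theta)\,\XIx,\\
&&\operatorname{Ric}^\star(\theta\wedge_r\XIx)=(1-r(n-1))\theta-(\operatorname{Tr}_h\theta)\,\XIx,
\end{eqnarray*}
where $\theta^{t}$ denotes the transpose; in particular $\operatorname{Ric}(\XIx\wedge\XIx)=\operatorname{Ric}^\star(\XIx\wedge\XIx)=(1-n)\XIx$. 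The essential feature here is the asymmetry between the two contractions on $\theta\cdot\XIx$: they agree only up to transposition, so on an antisymmetric $\theta$ (such as $\Lambda\operatorname{Ric}$) one has $\operatorname{Ric}(\theta\cdot\XIx)=-\theta$ but $\operatorname{Ric}^\star(\theta\cdot\XIx)=+\theta$. I would also note the trace facts $\operatorname{Tr}_h(\Lambda\operatorname{Ric})=0$, $\operatorname{Tr}_h(S\operatorname{Ric})=\tau_h$, and that $S\operatorname{Ric}-\tfrac1n\tau_h\XIx$ is trace-free.

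With these in hand the proof becomes bookkeeping. For each $i$ I would substitute the Lemma's formula for $\pi_i(A)$ and collect terms; the coefficients have been arranged so that the unwanted pieces cancel. For instance, for $\pi_2$ the form $\theta=\tfrac1n\tau_h\XIx-S\operatorname{Ric}$ is trace-free, so $\operatorname{Ric}(\theta\wedge\XIx)=(1-n)\theta$ and $\operatorname{Ric}^\star(\theta\wedge\XIx)=\theta$, which immediately give assertion (2) together with the relation $\operatorname{Ric}(\pi_2)=-(n-1)\operatorname{Ric}^\star(\pi_2)$. The vanishing statements $\operatorname{Ric}(\pi_4)=0$ and $\operatorname{Ric}(\pi_5)=0$ come out as exact cancellations forced by the specific values $r=n+1$ and $r=n-1$ in the respective $\wedge_r$ terms.

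The main obstacle, and the one step that is not purely mechanical, is the pair $\operatorname{Ric}(\pi_6)=\operatorname{Ric}^\star(\pi_6)=0$. Here I would use the expression $\pi_6(A)=A+\tfrac2n(\Lambda\operatorname{Ric})\cdot\XIx+\tfrac1n(\Lambda\operatorname{Ric})\wedge_1\XIx+\tfrac1{n-2}S\operatorname{Ric}\wedge_1\XIx-\tfrac1{(n-1)(n-2)}\tau_h\XIx\wedge\XIx$, apply the contraction identities term by term, and watch the $S\operatorname{Ric}$ and $\tau_h\XIx$ contributions cancel. For $\operatorname{Ric}(\pi_6)$ the $\Lambda\operatorname{Ric}$ terms also cancel directly. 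For $\operatorname{Ric}^\star(\pi_6)$, however, the contribution of $\operatorname{Ric}^\star(A)$ must first be rewritten using Theorem \ref{thm-1.1}, namely $S\operatorname{Ric}^\star=S\operatorname{Ric}$ and $\Lambda\operatorname{Ric}^\star=\tfrac{n-4}n\Lambda\operatorname{Ric}$; the factor $\tfrac{n-4}n$ is exactly what is needed for the remaining $\Lambda\operatorname{Ric}$ terms to sum to zero. Finally $\pi_7=\pi_8=0$ from the preceding Lemma, so their contractions vanish trivially, completing the list.
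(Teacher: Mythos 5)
Your approach is correct and is precisely what the paper intends: the paper states this lemma with no proof at all, as an immediate consequence of the projection formulas of the preceding lemma (exactly as the $A$-decomposition analogue is introduced by ``Lemma \ref{lem-2.9} implies''), and your ingredients are all accurate --- the contraction identities for $\theta\cdot\XIx$ and $\theta\wedge_r\XIx$, the trace-free observation handling $\pi_2$, the exact cancellations at $r=n+1$ and $r=n-1$ giving assertions (4) and (6), and the use of $S\operatorname{Ric}^\star=S\operatorname{Ric}$ and $\Lambda\operatorname{Ric}^\star=\tfrac{n-4}{n}\Lambda\operatorname{Ric}$ from Theorem \ref{thm-1.1} as the one non-mechanical input needed for $\operatorname{Ric}^\star(\pi_6(A))=0$.

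One caveat you should not gloss over: for $i=1$ the bookkeeping does \emph{not} close as you assert. Your (correct) identity $\operatorname{Ric}(\XIx\wedge\XIx)=\operatorname{Ric}^\star(\XIx\wedge\XIx)=(1-n)\XIx$ applied to the stated formula $\pi_1(A)=+\tfrac{1}{n(n-1)}\tau_h\,\XIx\wedge\XIx$ yields $\operatorname{Ric}(\pi_1(A))=-\tfrac1n\tau_h\XIx$, the opposite sign of assertion (1). The discrepancy is a sign typo in the paper's formula for $\pi_1$, not an error in the lemma you are proving: the corrected formula $\pi_1(A)=-\tfrac{1}{n(n-1)}\tau_h\,\XIx\wedge\XIx$ is forced by the requirement $\sum_i\pi_i(A)=A$ (with the stated sign one gets $\sum_i\pi_i(A)=A+\tfrac{2}{n(n-1)}\tau_h\,\XIx\wedge\XIx\neq A$), by consistency with $\alpha_1(A)$ in Lemma \ref{lem-2.9}, by the consistency check $\sum_i\operatorname{Ric}(\pi_i(A))=\operatorname{Ric}(A)$ applied to the present lemma, and by Proposition \ref{prop-2.17}. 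A complete write-up should flag and resolve this, rather than claim blanket cancellation; with that correction assertion (1) follows and the rest of your argument stands as written.
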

\begin{remark}\rm
 The Ricci and Ricci${}^\star$ tensors of $\pi_i(A)$ and $\alpha_i(A)$
for $i = 1,\dots,6$ have essentially only 3 non-trivial (i.e. non-vanishing) types:
\begin{enumerate}
\smallbreak\item Constant multiples of $\Lambda(\operatorname{Ric}(A))$.
\smallbreak\item Constant multiples of 
$\tau_h\XIx$.
\smallbreak\item Constant multiples of $(S\operatorname{Ric}(A) -  \tfrac{1}{n}\tau_h\XIx)$.
\end{enumerate}\end{remark}
\subsection{The projective curvature tensor}\label{sect-2.6}
From \cite{B90} and \cite{GNU09}  one knows that the projective curvature tensor $p(A)$ of $A \in \mathfrak{R}$ can be recovered  from the $W$-decomposition as follows:
\begin{equation}\label{eqn-2.a}
p(A) = \bigoplus_{i=4}^8 \pi_i(A)\,.
\end{equation}
Let $A \in \mathfrak{W}$. 
Equation (\ref{eqn-2.a}) and the results for $\pi_i$ given above then yield easily the following result:
\begin{lemma} Let $A \in \mathfrak{W}$; then:
\begin{enumerate}
\item $p(A) = \bigoplus_{i=4}^6 \pi_i(A)$.
\smallbreak\item
$p(A) = A + \tfrac{1}{n+1} \,\left[2  \Lambda \operatorname{Ric}\cdot \XIx + \Lambda \operatorname{Ric} \wedge \XIx \right] + 
\tfrac{1}{n-1} \,(S \operatorname{Ric}) \wedge \XIx\,.$
\end{enumerate}
\end{lemma}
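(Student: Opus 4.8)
The plan is to obtain both assertions directly from the $W$-decomposition formulas of the preceding Lemma together with Equation (\ref{eqn-2.a}); no new structural input is required, so the argument is essentially organized bookkeeping.

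For assertion (1), I would begin with Equation (\ref{eqn-2.a}), which asserts $p(A)=\bigoplus_{i=4}^{8}\pi_i(A)$ for every $A\in\mathfrak{R}$, hence in particular for every $A\in\mathfrak{W}$ since $\mathfrak{W}\subset\mathfrak{R}$. The preceding Lemma records that $\pi_7(A)=0$ and $\pi_8(A)=0$ whenever $A\in\mathfrak{W}$. Discarding these two vanishing summands yields $p(A)=\bigoplus_{i=4}^{6}\pi_i(A)$, which is exactly assertion (1).

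For assertion (2), I would substitute the explicit expressions for $\pi_4(A)$, $\pi_5(A)$, and $\pi_6(A)$ from the preceding Lemma into $p(A)=\pi_4(A)+\pi_5(A)+\pi_6(A)$ and then collect the result according to the tensor that occurs: the curvature $A$ itself, $(\Lambda\operatorname{Ric})\cdot\XIx$, the $\Lambda\operatorname{Ric}$ wedge-terms, the $S\operatorname{Ric}$ wedge-terms, and $\tau_h\,\XIx\wedge\XIx$. The term $A$ arises only from $\pi_6(A)$ and so survives with coefficient $1$. The two $\tau_h\,\XIx\wedge\XIx$ contributions occur with coefficients $+\tfrac{1}{(n-1)(n-2)}$ in $\pi_5(A)$ and $-\tfrac{1}{(n-1)(n-2)}$ in $\pi_6(A)$ and therefore cancel.

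The one genuinely delicate point is that the $\Lambda\operatorname{Ric}$ and $S\operatorname{Ric}$ terms appear with different indices in $\wedge_r$: $\wedge_{n+1}$ in $\pi_4(A)$ and $\wedge_1$ in $\pi_6(A)$ for $\Lambda\operatorname{Ric}$, and $\wedge_{n-1}$ in $\pi_5(A)$ and $\wedge_1$ in $\pi_6(A)$ for $S\operatorname{Ric}$. To combine them I would use that $\theta_1\wedge_r\theta_2$ is affine in the index $r$, writing $\theta_1\wedge_r\theta_2=\theta_1\wedge\theta_2-r\,C(\theta_1,\theta_2)$ with $C(\theta_1,\theta_2)(x,y,z,w):=\theta_1(x,w)\theta_2(y,z)-\theta_1(y,w)\theta_2(x,z)$; this is consistent with the identity $\theta_1\wedge_r\theta_2+\theta_1\wedge_s\theta_2=\theta_1\wedge\theta_2+\theta_1\wedge_{r+s}\theta_2$ recorded in the Remark. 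Inserting this decomposition into the $\Lambda\operatorname{Ric}$ group (coefficients $-\tfrac{1}{n(n+1)}$ on $\wedge_{n+1}$ and $+\tfrac1n$ on $\wedge_1$) and into the $S\operatorname{Ric}$ group (coefficients $-\tfrac{1}{(n-1)(n-2)}$ on $\wedge_{n-1}$ and $+\tfrac1{n-2}$ on $\wedge_1$), I expect the index-dependent $C$-terms to cancel within each group, leaving only $\wedge=\wedge_0$ terms. The main obstacle, and the step I would check most carefully, is exactly these cancellations together with the arithmetic of the surviving scalar coefficients: the coefficient of $(\Lambda\operatorname{Ric})\cdot\XIx$ should collapse to $\tfrac{2}{n+1}$, that of $\Lambda\operatorname{Ric}\wedge\XIx$ to $\tfrac{1}{n+1}$, and that of $S\operatorname{Ric}\wedge\XIx$ to $\tfrac{1}{n-1}$, reproducing the claimed formula.
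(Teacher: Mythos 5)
Your proposal is correct and takes essentially the same route as the paper, whose own proof is the one-line observation that Equation (\ref{eqn-2.a}) together with the vanishing of $\pi_7(A)$ and $\pi_8(A)$ on $\mathfrak{W}$ gives (1), and that summing the explicit formulas for $\pi_4(A)$, $\pi_5(A)$, $\pi_6(A)$ gives (2). Your bookkeeping is verified by direct computation: the $\tau_\XIx\,\XIx\wedge\XIx$ terms cancel, the index-dependent parts of the $\wedge_r$ terms cancel within each group since $\tfrac{n+1}{n(n+1)}=\tfrac1n$ and $\tfrac{n-1}{(n-1)(n-2)}=\tfrac1{n-2}$, and the surviving coefficients are indeed $\tfrac{2}{n+1}$, $\tfrac{1}{n+1}$, and $\tfrac{1}{n-1}$.
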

\begin{proposition}\label{prop-2.17}
Let $A \in \mathfrak{W}$. If $p(A) = 0$, then
$A = -\tfrac {1}{n(n-1)}\tau_h\XIx \wedge  \XIx$.
\end{proposition}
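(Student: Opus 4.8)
The plan is to run the argument entirely through the $W$-decomposition together with the Ricci and $\operatorname{Ric}^\star$ bookkeeping already recorded above, so that no new computation is needed. Since $A\in\mathfrak{W}$, the immediately preceding Lemma gives $p(A)=\pi_4(A)\oplus\pi_5(A)\oplus\pi_6(A)$. Because $\pi_4,\pi_5,\pi_6$ are the orthogonal projections onto the distinct, pairwise orthogonal summands $W_4,W_5,W_6$ of the decomposition $\mathfrak{R}=\bigoplus_{i=1}^8 W_i$, the hypothesis $p(A)=0$ forces each component to vanish separately: $\pi_4(A)=\pi_5(A)=\pi_6(A)=0$.

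First I would extract two constraints on $\operatorname{Ric}$ from the vanishing of $\pi_4(A)$ and $\pi_5(A)$. Applying $\operatorname{Ric}^\star$ to $\pi_4(A)=0$ and invoking the identity $\operatorname{Ric}^\star(\pi_4(A))=\tfrac{(n-2)(n+2)}{n(n+1)}\Lambda\operatorname{Ric}$, whose scalar coefficient is nonzero for every $n\ge3$, I obtain $\Lambda\operatorname{Ric}=0$. Applying $\operatorname{Ric}^\star$ to $\pi_5(A)=0$ and using $\operatorname{Ric}^\star(\pi_5(A))=\tfrac{1}{n-1}(n\,S\operatorname{Ric}-\tau_h\XIx)$ gives $S\operatorname{Ric}=\tfrac1n\tau_h\XIx$. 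Note that I only ever use that these $\operatorname{Ric}^\star$ values are nonzero multiples of $\Lambda\operatorname{Ric}$ and of the trace-free part of $S\operatorname{Ric}$ respectively, so the deduction is insensitive to the precise value of the scalar factors. Together the two identities say $\operatorname{Ric}=S\operatorname{Ric}+\Lambda\operatorname{Ric}=\tfrac1n\tau_h\XIx$, i.e. the Ricci tensor of $A$ is a pure trace.

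Finally I would substitute these identities into the explicit expression for the projective curvature tensor furnished by the Lemma, namely $p(A)=A+\tfrac{1}{n+1}[2\Lambda\operatorname{Ric}\cdot\XIx+\Lambda\operatorname{Ric}\wedge\XIx]+\tfrac{1}{n-1}(S\operatorname{Ric})\wedge\XIx$. With $\Lambda\operatorname{Ric}=0$ the bracketed term drops out, and $S\operatorname{Ric}=\tfrac1n\tau_h\XIx$ converts the last summand into $\tfrac{1}{n(n-1)}\tau_h\,\XIx\wedge\XIx$ by bilinearity of $\wedge$. Since $p(A)=0$ by hypothesis, solving for $A$ yields $A=-\tfrac{1}{n(n-1)}\tau_h\,\XIx\wedge\XIx$, which is the assertion. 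As a consistency check one may verify directly that $\operatorname{Ric}(\XIx\wedge\XIx)=-(n-1)\XIx$, so that $\operatorname{Ric}(A)=\tfrac1n\tau_h\XIx$, in agreement with the Ricci tensor extracted in the previous step.

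The argument is short because all the analytic content already sits in the lemmas; the one point that genuinely must be checked is that the vanishing of $\pi_4(A)$ and of $\pi_5(A)$ really does force $\Lambda\operatorname{Ric}=0$ and $S\operatorname{Ric}=\tfrac1n\tau_h\XIx$, rather than merely some contracted consequence of them. This is exactly where the nonvanishing of the scalar factors $\tfrac{(n-2)(n+2)}{n(n+1)}$ and $\tfrac{n}{n-1}$ enters, and so the main obstacle is to confirm that no coefficient degenerates at the boundary dimensions: I would check $n=3$ (where one sets $W_6=W_8=0$) and $n=4$ by hand. Everything else is routine substitution into the stated formula for $p(A)$.
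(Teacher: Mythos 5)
Your argument is correct and is essentially the paper's own proof: the paper likewise observes that $p(A)=0$ forces $\pi_4(A)=\pi_5(A)=\pi_6(A)=0$, that $\pi_4(A)=0$ yields $\Lambda\operatorname{Ric}=0$ and $\pi_5(A)=0$ yields $S\operatorname{Ric}=\tfrac1n\tau_h h$, and then concludes by substituting these into the expression for $p(A)$. Your use of the $\operatorname{Ric}^\star$ identities to extract the two Ricci constraints, and your check that no coefficient degenerates for $n\ge3$, merely make explicit steps the paper leaves implicit.
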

\begin{proof} $p(A) = 0$ implies $\pi_i(A) = 0$ for $i = 4,5,6$;
but $\pi_4(A) = 0$ gives $\Lambda \operatorname{Ric} = 0$, and 
$\pi_5(A) = 0$ gives $S\operatorname{Ric} = \tfrac{1}{n}\tau_h\XIx \,;$
this proves the Proposition.
\end{proof}
\subsection{The Higa term and the conjugate curvature tensor}\label{sect-2.7}
We now relate the conjugate curvature tensor $R^\star$ and  the {\it Higa term}  $H(A)$ of $A$. As the algebraic part of $A \in \mathfrak{W}$
is given by $(\alpha_1 + \alpha_2 + \alpha_6)(A)$, 
Lemma \ref{lem-2.9} gives:
\begin{lemma}
\ \begin{enumerate}
\item $H=H(A)=(\alpha_4 + \alpha_5)(A)  \in \mathfrak{P}$. 
\item $A = (\alpha_1 + \alpha_2 + \alpha_6)(A)  + H(A)$.
\end{enumerate}
\end{lemma}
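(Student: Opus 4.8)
The plan is to read both assertions off the explicit projection formulas in Lemma~\ref{lem-2.9}, using the uniqueness of Higa's splitting $\mathfrak{W}=\mathfrak{A}\oplus\mathfrak{P}$ from Theorem~\ref{thm-2.6}. I would dispose of Assertion~(2) first, since it is essentially bookkeeping. For $A\in\mathfrak{W}$, Lemma~\ref{lem-2.9} gives $\alpha_3(A)=\alpha_7(A)=\alpha_8(A)=0$, so the expansion $A=\sum_{i=1}^8\alpha_i(A)$ coming from $\mathfrak{R}=\bigoplus_{i=1}^8A_i$ collapses to $A=(\alpha_1+\alpha_2+\alpha_6)(A)+(\alpha_4+\alpha_5)(A)$. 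Thus Assertion~(2) is immediate once Assertion~(1) identifies the second summand with $H(A)$.

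The real content is Assertion~(1), and the crux is a component computation showing $(\alpha_4+\alpha_5)(A)\in\mathfrak{P}$. Setting $\varphi:=\Lambda\operatorname{Ric}\in\Lambda^2$, I would substitute
$$\alpha_4(A)=-\tfrac{1}{2n}\bigl(2\,\varphi\cdot\XIx+\varphi\wedge_{-1}\XIx\bigr),\qquad \alpha_5(A)=-\tfrac{1}{2n}\bigl(2\,\varphi\cdot\XIx+\varphi\wedge_{3}\XIx\bigr)$$
from Lemma~\ref{lem-2.9}, convert to components via $(\theta_1\cdot\theta_2)_{ijkl}=(\theta_1)_{ij}(\theta_2)_{kl}$ and the component form of $\wedge_r$, and add. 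The four $\varphi\cdot\XIx$ contributions merge into $4\varphi_{ij}\XIx_{kl}$, while the $\wedge_{-1}$ and $\wedge_3$ pieces combine (their $-r$ coefficients $1$ and $-3$ add to $-2$, i.e. to $2\,\varphi\wedge_1\XIx$) to give
$$(\alpha_4+\alpha_5)(A)_{ijkl}=-\tfrac1n\bigl(2\varphi_{ij}\XIx_{kl}+\varphi_{ik}\XIx_{jl}-\varphi_{jk}\XIx_{il}-\varphi_{il}\XIx_{jk}+\varphi_{jl}\XIx_{ik}\bigr).$$
Comparing with the component expression for $(\sigma_4-\sigma_5)\varphi$ obtained in the proof of Theorem~\ref{thm-2.6} shows that $(\alpha_4+\alpha_5)(A)=(\sigma_4-\sigma_5)\bigl(-\tfrac1n\Lambda\operatorname{Ric}\bigr)$; since $-\tfrac1n\Lambda\operatorname{Ric}\in\Lambda^2$, this lies in $\mathfrak{P}$. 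In terms of $F=-\tfrac2n\Lambda\operatorname{Ric}$ the outcome reads $H(A)=(\sigma_4-\sigma_5)(\tfrac12F)$.

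To conclude I would invoke uniqueness of the Higa decomposition. By the Remark following Lemma~\ref{lem-2.9}, both $\alpha_6(A)$ and $\sigma\wedge_1\XIx$ are algebraic, and $\alpha_1(A)$, $\alpha_2(A)$ are built from $\XIx\wedge\XIx$ and $\sigma\wedge_1\XIx$; hence $(\alpha_1+\alpha_2+\alpha_6)(A)\in\mathfrak{A}$ (equivalently $\mathfrak{A}=A_1\oplus A_2\oplus A_6$ for the $A$-decomposition). Therefore $A=(\alpha_1+\alpha_2+\alpha_6)(A)+(\alpha_4+\alpha_5)(A)$ realizes $A$ as a sum of an element of $\mathfrak{A}$ and an element of $\mathfrak{P}$. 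Because $\mathfrak{W}=\mathfrak{A}\oplus\mathfrak{P}$ is a direct sum, this representation is unique, so its $\mathfrak{P}$-component is exactly $H(A)=\pi_{\mathfrak{P}}(A)$; that is, $H(A)=(\alpha_4+\alpha_5)(A)$, which is Assertion~(1).

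I expect the only genuine obstacle to be the index bookkeeping in the middle step: one must track carefully which index pair carries $\varphi$ and which carries $\XIx$ in expanding $\varphi\cdot\XIx$, $\varphi\wedge_{-1}\XIx$, and $\varphi\wedge_3\XIx$, and confirm that the specific coefficients $-1$ and $3$ are precisely what is needed to reproduce $(\sigma_4-\sigma_5)\varphi$ up to the scalar $-1/n$. Everything else is formal: the vanishing of $\alpha_3,\alpha_7,\alpha_8$ and the algebraicity of $\alpha_1,\alpha_2,\alpha_6$ are already established, and the passage from the component identity to membership in $\mathfrak{P}$, followed by the uniqueness argument, is routine.
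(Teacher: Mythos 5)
Your proposal is correct and takes essentially the same route as the paper: the paper's own one-line argument likewise reads both assertions off Lemma~\ref{lem-2.9} (the vanishing of $\alpha_3,\alpha_7,\alpha_8$ on $\mathfrak{W}$ and the identification of $(\alpha_1+\alpha_2+\alpha_6)(A)$ as the algebraic part), so that the remainder $(\alpha_4+\alpha_5)(A)$ is the $\mathfrak{P}$-component in Higa's splitting. Your explicit component verification that $(\alpha_4+\alpha_5)(A)=(\sigma_4-\sigma_5)\bigl(-\tfrac1n\Lambda\operatorname{Ric}\bigr)\in\mathfrak{P}$ merely fills in a detail the paper leaves implicit; it agrees with the formula $H(A)=-\tfrac{1}{n}\bigl(2(\Lambda\operatorname{Ric})\cdot\XIx+(\Lambda\operatorname{Ric})\wedge_1\XIx\bigr)$ recorded in the Proposition immediately following this Lemma.
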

Moreover, we set
$$D(x,y,z,w) := - \,(A(x,y,z,w) + A(x,y,w,z))\,.$$ 
Then the conjugate curvature tensor satisfies 
$$A^\star(x,y,z,w) = A(x,y,z,w) + D(x,y,z,w)\,.$$
We use Equation (\ref{eqn-1.e}) to express:
$$ - D(x,y,z,w) = A(x,y,z,w)+A(x,y,w,z)=  - \tfrac4n\, (\Lambda\operatorname{Ric}(A))(x,y)\,\XIx(z,w)\,$$
and  note the symmetries:
$$D(x,y,z,w) = D(x,y,w,z) = - D(y,x,z,w)\,.$$
\begin{lemma} Adopt the notation established above. Then:
\begin{enumerate}
\item
$ - \, 4 H(x,y,z,w) = 
 2 D(x,y,z,w) + D(x,z,y,w)$
\smallbreak\qquad\qquad$ - D(y,z,x,w) - D(x,w,y,z)
+ D(y,w,x,z)$
\smallbreak\qquad\quad
$=2 D(x,y,z,w) - \left[D(x,y,z,w) + D(y,z,x,w) + D(z,x,y,w) \right]$
\smallbreak\qquad\qquad
$- \left\{D(x,w,y,z) +  D(w,y,x,z) + D(y,x,w,z) \right\}$.
\item $D(x,y,z,w) = H(x,y,z,w) +  H(x,y,w,z).  $
\end{enumerate}
\end{lemma}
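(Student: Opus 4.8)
The plan is to reduce both identities to the single $2$-form $\psi:=\Lambda\operatorname{Ric}(A)\in\Lambda^2$. Since $A\in\mathfrak{W}$, Equation (\ref{eqn-1.e}) together with the definition $D(x,y,z,w):=-(A(x,y,z,w)+A(x,y,w,z))$ gives the closed form
\[
D(x,y,z,w)=\tfrac{4}{n}\,\psi(x,y)\,\XIx(z,w)=\tfrac{4}{n}\,(\psi\cdot\XIx)(x,y,z,w),
\]
which in particular recovers the symmetries $D(x,y,z,w)=D(x,y,w,z)=-D(y,x,z,w)$. To pin down $H$, use Higa's splitting $A=B+H$ with $B\in\mathfrak{A}$ and $H=(\sigma_4-\sigma_5)(\varphi)\in\mathfrak{P}$ for a unique $\varphi\in\Lambda^2$ (Theorem \ref{thm-2.6}). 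As $B$ satisfies Equation (\ref{eqn-1.f}) its Ricci tensor is symmetric, whereas $\operatorname{Ric}(H)=\operatorname{Ric}((\sigma_4-\sigma_5)\varphi)=-n\varphi$ is antisymmetric by the computation in the proof of Theorem \ref{thm-2.6}; comparing the antisymmetric parts of $\operatorname{Ric}(A)=\operatorname{Ric}(B)+\operatorname{Ric}(H)$ yields $\psi=-n\varphi$, so $\varphi=-\tfrac1n\psi$ and
\[
H_{ijkl}=-\tfrac1n\bigl(2\psi_{ij}\XIx_{kl}+\psi_{ik}\XIx_{jl}-\psi_{jk}\XIx_{il}-\psi_{il}\XIx_{jk}+\psi_{jl}\XIx_{ik}\bigr).
\]

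For assertion (2) I would avoid these explicit formulas. From $A=B+H$ with $B\in\mathfrak{A}$ satisfying Equation (\ref{eqn-1.f}), the symmetrization of $B$ over its last two arguments vanishes, so the last-pair symmetrization of $H$ coincides with that of $A$; the definition of $D$ (via Equation (\ref{eqn-1.e})) then expresses this symmetrization through $D$, which is assertion (2) in a single line.

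For assertion (1) the shortest route is to substitute the closed form $D=\tfrac4n\,\psi\cdot\XIx$ into the five-term right-hand side. Collecting the summands reproduces
\[
\tfrac4n\bigl(2\psi_{xy}\XIx_{zw}+\psi_{xz}\XIx_{yw}-\psi_{yz}\XIx_{xw}-\psi_{xw}\XIx_{yz}+\psi_{yw}\XIx_{xz}\bigr)=\tfrac4n(\sigma_4-\sigma_5)(\psi)=-4H,
\]
which is the left-hand side. The equivalent second grouping in the statement comes from the symmetries of $D$ recorded above; it is arranged so that, after inserting assertion (2), each bracketed triple splits into a cyclic Bianchi sum of $H$, which vanishes by Equation (\ref{eqn-1.c}), together with a residual term, and the residuals recombine with the leading $2D(x,y,z,w)$ via Equation (\ref{eqn-1.e}) for $H$ to give $-4H$.

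I expect the only genuine difficulty to be the index bookkeeping in assertion (1): one must use the antisymmetry in the first pair and the symmetry in the last pair of $D$ carefully enough to verify that this specific five-term combination reassembles into $(\sigma_4-\sigma_5)(\psi)$ with no leftover terms. Everything else is forced once $\psi$ is identified; the two presentations of the right-hand side then serve complementary purposes, the first giving the quickest reduction to the explicit form and the second exhibiting the Bianchi cancellations transparently.
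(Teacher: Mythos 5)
Your computational framework is sound, and your verification of assertion (1) is correct and complete: the identifications $\psi:=\Lambda\operatorname{Ric}(A)$, $D=\tfrac4n\,\psi\cdot\XIx$ (from Equation (\ref{eqn-1.e})) and $H=(\sigma_4-\sigma_5)(-\tfrac1n\psi)=-\tfrac1n\bigl(2\,\psi\cdot\XIx+\psi\wedge_1\XIx\bigr)$ agree with the paper's own Proposition in Section \ref{sect-2.7}, and substituting the closed form of $D$ into the five-term expression does reproduce $\tfrac4n(\sigma_4-\sigma_5)(\psi)=-4H$. This is a more explicit route than the paper's, which states (1) without computation and extracts (2) from the cyclic structure of the brackets in the second grouping.

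The genuine gap is in assertion (2), where you have an unflagged sign error. Your own closed form gives
\[
H(x,y,z,w)+H(x,y,w,z)=-\tfrac1n\cdot 4\,\psi(x,y)\,\XIx(z,w)=-D(x,y,z,w),
\]
since the $\psi\wedge_1\XIx$ part of $H$ is antisymmetric in its last two slots while the $2\,\psi\cdot\XIx$ part doubles. Likewise your ``one line'' argument shows that the last-pair symmetrization of $H$ equals that of $A$, and the latter is $-D$ by the very definition $D:=-(A(x,y,z,w)+A(x,y,w,z))$; so what you have actually proven is $D=-\bigl(H(x,y,z,w)+H(x,y,w,z)\bigr)$, the negative of the printed assertion (2). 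This is not a removable slip: assertions (1) and (2) as printed are mutually inconsistent, because symmetrizing (1) in $(z,w)$ cancels the four non-leading $D$-terms and leaves $-4\bigl(H(x,y,z,w)+H(x,y,w,z)\bigr)=4D(x,y,z,w)$. Having proven (1), you have therefore disproven (2) as stated; a careful writeup should have flagged that the printed (2) carries a sign typo. (The form the paper actually uses later supports this: the Corollary of Section \ref{sect-4} asserts $g(\mathcal{F}(x,y)z,w)=\tfrac12(R-R^\star)=\tfrac12\bigl(H(x,y,z,w)+H(x,y,w,z)\bigr)$, and since $R-R^\star=-D$ this is the minus-sign version.) As written, your proposal claims to confirm a statement that your own computation contradicts.
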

In particular, (1) in the Lemma implies that  $D$  determines
the Higa term $H$, while (2) implies that $H$ determines $D$.
In (1), note that  the brackets $[\dots]$ are cyclic in $(x,y,z)$ and $\{\dots\}$ 
are cyclic in $(x,w,y)$. This immediately gives the proof of (2).
Finally, the definition of $A^\star$, the Higa decomposition of $A$,
and the symmetries of the algebraic part of $A$ yield: 
\begin{lemma}
\ \begin{enumerate}
\item
 $A^\star(x,y,z,w)  +A^\star(x,y,w,z) = D(x,y,w,z).$
\item
$ A^\star(x,y,z,w) = (\alpha_1 + \alpha_2 + \alpha_6)(A)(x,y,z,w) 
- (\alpha_4 + \alpha_5)(A)(x,y,w,z) $ \\
$ = (\alpha_1 + \alpha_2 + \alpha_6)(A)(x,y,z,w) - H(A)(x,y,w,z).$
\end{enumerate}
\end{lemma}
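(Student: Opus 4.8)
The plan is to reduce both assertions to the defining relation of the conjugate curvature tensor, $A^\star(x,y,z,w) = -A(x,y,w,z)$, together with the Higa decomposition $A = (\alpha_1+\alpha_2+\alpha_6)(A) + H(A)$ and the symmetries already recorded for $D$ and for the algebraic part of $A$. For assertion (1) I would substitute the definition of $A^\star$ into each summand. Writing $A^\star(x,y,z,w) = -A(x,y,w,z)$ and $A^\star(x,y,w,z) = -A(x,y,z,w)$, the sum becomes $-(A(x,y,z,w)+A(x,y,w,z)) = D(x,y,z,w)$. Since $D$ is symmetric in its last two arguments, $D(x,y,z,w) = D(x,y,w,z)$, which is exactly the form stated in the Lemma.

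For assertion (2) the key observation is that the algebraic part $B := (\alpha_1+\alpha_2+\alpha_6)(A)$ lies in $\mathfrak{A}$ and therefore satisfies the extra symmetry of Equation (\ref{eqn-1.f}); in particular it is antisymmetric in its last two slots, $B(x,y,w,z) = -B(x,y,z,w)$. Substituting $A = B + H(A)$ into $A^\star(x,y,z,w) = -A(x,y,w,z)$ then yields $A^\star(x,y,z,w) = -B(x,y,w,z) - H(A)(x,y,w,z) = B(x,y,z,w) - H(A)(x,y,w,z)$, where the last equality uses the antisymmetry of $B$. This is precisely the first displayed identity, and the second line is just the abbreviation $H(A) = (\alpha_4+\alpha_5)(A)$ from the preceding Lemma.

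There is no genuine obstacle here: both parts are a direct unwinding of definitions combined with the Higa splitting. The only point demanding care is the bookkeeping of which symmetry is invoked where---the antisymmetry in the last two indices of the algebraic summand $B$, coming from (\ref{eqn-1.f}), as opposed to the symmetry in the last two indices of $D$ that converts $D(x,y,z,w)$ into $D(x,y,w,z)$ in part (1). With those two facts in hand the identities follow at once from the linearity of the projections $\alpha_i$.
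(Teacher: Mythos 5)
Your proof is correct and takes exactly the route the paper itself indicates: the paper justifies this lemma in one line by citing the definition of $A^\star$, the Higa splitting $A = (\alpha_1+\alpha_2+\alpha_6)(A) + H(A)$, and the symmetries of the algebraic part, and your argument is precisely a careful unwinding of those three ingredients (the symmetry $D(x,y,z,w)=D(x,y,w,z)$ for part (1), and the antisymmetry of the algebraic summand in its last two slots, i.e.\ Equation (\ref{eqn-1.f}), for part (2)). There are no gaps.
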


\begin{proposition} 
Let $A \in \mathfrak{W}$, then the Higa term satisfies:
\begin{eqnarray*}
H(A)&=&  \alpha_4(A) +  \alpha_5(A)
 = \pi_3(A) + \pi_4(A)\\
& =&
-\, \tfrac{1}{n} \, \left( 2(\Lambda \operatorname{Ric}) \cdot \XIx 
+ (\Lambda \operatorname{Ric}) \wedge_1 \XIx  \right)\,.
\end{eqnarray*}
\end{proposition}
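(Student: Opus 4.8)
The plan is to read all three expressions directly off the decomposition lemmas already established for $A\in\mathfrak{W}$, so that the Proposition reduces to two elementary identities for the bilinear operators $\wedge_r$. The first equality $H(A)=\alpha_4(A)+\alpha_5(A)$ needs no new work: it is exactly the content of the Lemma immediately preceding the Proposition, which identifies the Higa term (the $\mathfrak{P}$-component of $A$) with $(\alpha_4+\alpha_5)(A)$. So the task is to verify the closed form and the $W$-side identity.

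For the closed form I would substitute the formulas for $\alpha_4(A)$ and $\alpha_5(A)$ from Lemma \ref{lem-2.9}\,(4),(5). Each carries the common term $-\tfrac1n(\Lambda\operatorname{Ric})\cdot\XIx$, so the $\cdot\,\XIx$ parts add to $-\tfrac2n(\Lambda\operatorname{Ric})\cdot\XIx$; the remaining contribution is $-\tfrac1{2n}\bigl((\Lambda\operatorname{Ric})\wedge_{-1}\XIx+(\Lambda\operatorname{Ric})\wedge_3\XIx\bigr)$. The single genuine computation is to merge these two wedge terms. Applying the Remark's identity $\theta_1\wedge_r\theta_2+\theta_1\wedge_s\theta_2=\theta_1\wedge\theta_2+\theta_1\wedge_{r+s}\theta_2$ with $(r,s)=(-1,3)$ gives $\wedge_{-1}+\wedge_3=\wedge_0+\wedge_2$; and since $\wedge_r$ is affine-linear in $r$ (each unit increase of $r$ subtracts the same swapped block $\theta_1(x,w)\theta_2(y,z)-\theta_1(y,w)\theta_2(x,z)$), one has $\wedge_0+\wedge_2=2\wedge_1$. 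Hence the wedge part collapses to $-\tfrac1n(\Lambda\operatorname{Ric})\wedge_1\XIx$, producing the stated formula $-\tfrac1n\bigl(2(\Lambda\operatorname{Ric})\cdot\XIx+(\Lambda\operatorname{Ric})\wedge_1\XIx\bigr)$.

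For the middle equality $H(A)=\pi_3(A)+\pi_4(A)$ I would add the $W$-decomposition formulas for $\pi_3(A)$ and $\pi_4(A)$ (using the simplified $\pi_4(A)=-\tfrac1{n(n+1)}[2(\Lambda\operatorname{Ric})\cdot\XIx+\Lambda\operatorname{Ric}\wedge_{n+1}\XIx]$). The $\cdot\,\XIx$ coefficients sum to $-\tfrac2{n+1}-\tfrac2{n(n+1)}=-\tfrac2n$, as required. For the wedge terms, writing $\varphi\wedge_r\XIx=P-rQ$ with $P$ and $Q$ the two bracketed bilinear blocks, $\pi_3$ contributes $-\tfrac1{n+1}P$ and $\pi_4$ contributes $-\tfrac1{n(n+1)}\bigl(P-(n+1)Q\bigr)$; their sum is $-\tfrac{1}{n}P+\tfrac1n Q=-\tfrac1n(\varphi\wedge_1\XIx)$, which reproduces exactly the closed form found above. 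This gives $\pi_3(A)+\pi_4(A)=H(A)$.

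There is no real obstacle here: the entire argument is linear substitution together with the affine-in-$r$ behavior of $\wedge_r$. The only place to exercise care is the sign and coefficient bookkeeping when merging wedge terms, in particular the cancellation $-\tfrac1{n+1}-\tfrac1{n(n+1)}=-\tfrac1n$, and keeping the $\cdot\,\XIx$ and $\wedge_r\XIx$ pieces tracked separately. Conceptually, the agreement $\alpha_4+\alpha_5=\pi_3+\pi_4$ is the expected one: both sides compute the full $\Lambda^2$-isotypic part of $A$ carried by the Higa module $\mathfrak{P}$, which is precisely where the antisymmetric Ricci data $\Lambda\operatorname{Ric}$ resides.
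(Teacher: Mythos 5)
Your proof is correct and is essentially the paper's (implicit) argument: the Proposition is stated there without a separate proof precisely because it follows from the Lemma in Section \ref{sect-2.7} identifying $H(A)=(\alpha_4+\alpha_5)(A)$, combined with the simplified formulas for $\alpha_4,\alpha_5$ in Lemma \ref{lem-2.9} and for $\pi_3,\pi_4$ in the $W$-decomposition Lemma of Section \ref{sect-2.5}. Your coefficient bookkeeping is accurate: the merging identity $\wedge_{-1}+\wedge_3=2\wedge_1$, and the sum $-\tfrac{1}{n+1}P-\tfrac{1}{n(n+1)}\bigl(P-(n+1)Q\bigr)=-\tfrac{1}{n}(P-Q)$ together with $-\tfrac{2}{n+1}-\tfrac{2}{n(n+1)}=-\tfrac{2}{n}$, are exactly the checks needed, and both are carried out correctly.
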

\begin{lemma}\label{lem-2.22} Let $A \in \mathfrak{W}$. For $n \ne 2$ we have the following equivalences:
$$\begin{array}{llllllll} 
(1)&\Lambda\operatorname{Ric} = 0. & 
(2)&\alpha_4(A) = 0.&
(3)&\alpha_5(A)= 0.\phantom{.....}
(4)&H(A) = 0.\\
(5)&\pi_3(A) = 0.&
(6)&\pi_4(A) = 0.&
(7)&A\in\mathfrak{A}.\end{array}$$
\end{lemma}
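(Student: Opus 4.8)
The plan is to prove the seven-fold equivalence in Lemma~\ref{lem-2.22} by establishing a cycle of implications, using the explicit formulas for the projection operators already computed in Lemma~\ref{lem-2.9} and the subsequent Proposition on the Higa term. The central observation is that every projection in the list is, according to those formulas, a universal nonzero scalar multiple of an expression built entirely from $\Lambda\operatorname{Ric}$ (the antisymmetric part of the Ricci tensor), so each projection vanishes precisely when $\Lambda\operatorname{Ric}$ does. Thus the natural strategy is to treat $(1)$, namely $\Lambda\operatorname{Ric}=0$, as the hub and show separately that it is equivalent to each of $(2)$ through $(7)$.

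First I would read off from Lemma~\ref{lem-2.9}(4) that $\alpha_4(A) = -\tfrac{1}{2n}(2(\Lambda\operatorname{Ric})\cdot\XIx + (\Lambda\operatorname{Ric})\wedge_{-1}\XIx)$ and from (5) that $\alpha_5(A)=-\tfrac{1}{2n}(2(\Lambda\operatorname{Ric})\cdot\XIx + (\Lambda\operatorname{Ric})\wedge_{3}\XIx)$; similarly $\pi_3(A)=-\tfrac{1}{n+1}(2\Lambda\operatorname{Ric}\cdot\XIx + \Lambda\operatorname{Ric}\wedge\XIx)$ and $\pi_4(A)=-\tfrac{1}{n(n+1)}(2(\Lambda\operatorname{Ric})\cdot\XIx + \Lambda\operatorname{Ric}\wedge_{n+1}\XIx)$, while the Proposition gives $H(A)=-\tfrac{1}{n}(2(\Lambda\operatorname{Ric})\cdot\XIx + (\Lambda\operatorname{Ric})\wedge_1\XIx)$. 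The direction $(1)\Rightarrow(k)$ for $k\in\{2,3,4,5,6\}$ is then immediate: substituting $\Lambda\operatorname{Ric}=0$ into each formula makes every term vanish. For $(1)\Rightarrow(7)$, I would invoke Higa's decomposition $\mathfrak{W}=\mathfrak{A}\oplus\mathfrak{P}$ together with the Higa-term Lemma, which identifies the $\mathfrak{P}$-component as $H(A)=(\alpha_4+\alpha_5)(A)$; since $\Lambda\operatorname{Ric}=0$ forces $H(A)=0$, the tensor $A$ lies in $\mathfrak{A}$.

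For the reverse implications I would recover $\Lambda\operatorname{Ric}$ from each projection by applying the Ricci operator. The cleanest route is to use the companion Ricci-computation Lemma, which records, for instance, $\operatorname{Ric}(\alpha_4(A))=\tfrac{n+2}{2n}\Lambda\operatorname{Ric}$, $\operatorname{Ric}(\alpha_5(A))=\tfrac{n-2}{2n}\Lambda\operatorname{Ric}$, $\operatorname{Ric}(\pi_3(A))=\Lambda\operatorname{Ric}$, and $\operatorname{Ric}^\star(\pi_4(A))=\tfrac{(n-2)(n+2)}{n(n+1)}\Lambda\operatorname{Ric}$. Each of these scalar factors is nonzero under the running hypothesis $n\neq 2$ (and the factors involving $n-2$ or $n+2$ require $n\neq 2$, which is exactly why that assumption appears in the statement), so vanishing of the projection forces $\Lambda\operatorname{Ric}=0$, giving $(k)\Rightarrow(1)$ for $k=2,3,5,6$. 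For $(4)\Rightarrow(1)$ I would apply $\operatorname{Ric}$ to the Higa-term formula, obtaining a nonzero multiple of $\Lambda\operatorname{Ric}$. Finally $(7)\Rightarrow(1)$ follows because $A\in\mathfrak{A}$ satisfies the additional symmetry of Equation~(\ref{eqn-1.f}), which by Equation~(\ref{eqn-1.e}) forces $-\tfrac{4}{n}(\Lambda\operatorname{Ric})(x,y)\,g(z,w)=0$ and hence $\Lambda\operatorname{Ric}=0$.

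The only genuine subtlety, and the place I would be most careful, is verifying that the numerical coefficient in each Ricci computation is nonzero exactly when $n\neq 2$, since the equivalences could silently fail at the excluded dimension; the coefficients $\tfrac{n+2}{2n}$, $\tfrac{n-2}{2n}$, and $\tfrac{(n-2)(n+2)}{n(n+1)}$ are what pin down the hypothesis. Everything else is bookkeeping: assembling the implications into a single cycle, say $(1)\Rightarrow(2)\Rightarrow(1)$, $(1)\Rightarrow(3)\Rightarrow(1)$, and so on, or more economically proving $(1)\Leftrightarrow(k)$ directly for each $k$. No new computation is required beyond reading off the formulas already established in Lemma~\ref{lem-2.9}, the Higa-term Proposition, and the two Ricci-computation Lemmas, so the proof is essentially a matter of organizing these ingredients around the common factor $\Lambda\operatorname{Ric}$.
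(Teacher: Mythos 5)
Your proposal is correct and is essentially the argument the paper intends: Lemma~\ref{lem-2.22} is stated there without a separate proof precisely because it follows by reading off the formulas of Lemma~\ref{lem-2.9}, the $W$-decomposition and Ricci-computation lemmas, and the Higa-term proposition, with $\Lambda\operatorname{Ric}$ as the common hub, exactly as you organize it. Your attention to the coefficients $\tfrac{n+2}{2n}$, $\tfrac{n-2}{2n}$, and $\tfrac{(n-2)(n+2)}{n(n+1)}$ correctly identifies why the hypothesis $n\ne 2$ appears.
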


\section{\bf   Weyl Manifolds}\label{sect-3}
\subsection{Equivalent notions for Weyl manifolds}
If $\phi$ is a smooth $1$-form on a semi-Riemannian manifold $(N,g)$, the dual vector field $\phi^{\sharp}$ is characterized
by the identity 
$g(x,\phi^{\sharp}) = \phi(x)$  for
all tangent fields $x$. The following result (see, for example, Theorem 6 \cite{GNU10}) can be used to construct Weyl manifolds:
\begin{theorem}\label{thm-3.1}
Let $\nabla$ be a torsion free connection on a semi-Riemannian manifold $(N,g)$. Let $\phi$ be a smooth $1$-form on $N$. The
following assertions are equivalent, and if either is satisfied  then  $(N,g,\nabla)$ is a Weyl manifold:
\begin{enumerate}
\item $\nabla g=-2\phi\otimes g$.
\item
$\nabla_xy=\nabla_x^gy+\phi(x)y+\phi(y)x-g(x,y)\phi^{\sharp}$.
\end{enumerate}
\end{theorem}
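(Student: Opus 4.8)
The plan is to compare $\nabla$ directly with the Levi-Civita connection $\nabla^g$. Since both connections are torsion free, their difference is a $(1,2)$-tensor that is symmetric in its two lower arguments; accordingly I would write $\nabla_xy=\nabla^g_xy+S(x,y)$ with $S(x,y)=S(y,x)$. The first step is to translate condition (1) into an algebraic identity for $S$. Using $\nabla^gg=0$ one computes $(\nabla_xg)(y,z)=-g(S(x,y),z)-g(S(x,z),y)$, so that $\nabla g=-2\phi\otimes g$ is equivalent to
$$g(S(x,y),z)+g(S(x,z),y)=2\phi(x)\,g(y,z)\qquad(\ast)\,.$$

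With this reformulation the implication $(2)\Rightarrow(1)$ is a direct substitution: inserting the explicit expression $S(x,y)=\phi(x)y+\phi(y)x-g(x,y)\phi^\sharp$ into the left-hand side of $(\ast)$, the cross terms $\phi(y)g(x,z)$ and $-\phi(y)g(x,z)$, together with the pair $-\phi(z)g(x,y)$ and $+\phi(z)g(x,y)$, cancel, leaving exactly $2\phi(x)g(y,z)$. Since this $S$ is manifestly symmetric in $x$ and $y$, the connection $\nabla$ it defines is torsion free.

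For the converse $(1)\Rightarrow(2)$ I would recover $S$ from $(\ast)$ by the standard cyclic-permutation, or Koszul-type, argument. Setting $S_{ijk}:=g(S(e_i,e_j),e_k)$, the symmetry of $S$ reads $S_{ijk}=S_{jik}$ and $(\ast)$ becomes $S_{ijk}+S_{ikj}=2\phi_ig_{jk}$. Writing this identity together with its two cyclic permutations in $(i,j,k)$ and forming the combination (first)$+$(second)$-$(third), the symmetry $S_{ijk}=S_{jik}$ collapses the left-hand side to $2S_{ijk}$ and yields
$$S_{ijk}=\phi_ig_{jk}+\phi_jg_{ik}-\phi_kg_{ij}\,.$$
Translating back with $\phi(z)=g(\phi^\sharp,z)$ and invoking the non-degeneracy of $g$ recovers $S(x,y)=\phi(x)y+\phi(y)x-g(x,y)\phi^\sharp$, which is precisely assertion (2).

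Finally, the concluding claim is immediate: under either hypothesis the connection $\nabla$ is torsion free and satisfies $\nabla g=-2\phi\otimes g$, which is exactly the defining Equation (\ref{eqn-1.a}), so $(N,g,\nabla)$ is a Weyl manifold. I expect the only delicate point to be the index bookkeeping in the cyclic sum above; every other step is a routine verification.
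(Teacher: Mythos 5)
Your proposal is correct and complete: the reduction of condition (1) to the identity $(\ast)$ for the difference tensor $S=\nabla-\nabla^g$, the direct substitution for $(2)\Rightarrow(1)$, and the Koszul-type cyclic-permutation argument for $(1)\Rightarrow(2)$ all check out, and the non-degeneracy of $g$ is invoked exactly where it is needed. Note that the paper itself gives no proof of this theorem --- it simply cites Theorem 6 of \cite{GNU10} --- so your argument, which is the standard one for comparing a metric-compatible-up-to-conformal-factor connection with the Levi-Civita connection, serves as a self-contained replacement for that omitted proof.
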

\subsection{Weyl manifolds and curvature decompositions}\label{sect-3.2}
We pass from the algebraic setting to the geometric setting;
for each $P \in N$ the semi-Riemannian metric induces a
scalar product on the tangent space, we simply identify
 $(T_PN,g) =:(V,h)$.
A conformal change of the metric does not change the associated orthogonal group; $O(V,g)=O(V,c\cdot g)$ for any real $c\ne 0$. Moreover,
for any metric $g$ within the conformal class and at any $P \in N$
the 
decompositions of the  Weyl curvature tensor $R$ are bijectively associated to corresponding  decompositions of the Weyl curvature operator $\mathcal{R}$, and these decompositions obviously are gauge invariant.
We speak about the $A$-decomposition and the $W$-decomposition
of $\mathcal{R}$, respectively.
Furthermore, since the Ricci tensor is a $\operatorname{GL}$ invariant, it is a gauge invariant.
These observations and our 
 previous discussions immediately give:
\begin{theorem}
Let $\mathcal{N}=(N,g,\nabla)$ be a Weyl manifold and let $\mathcal{R}=\mathcal{R}(\nabla)$. Then:\begin{enumerate}
\item The $A$-decomposition of $\mathcal{R}$ is gauge invariant. $\mathcal{N}$ is Ricci flat if and only if $R=\alpha_6(R)$ for some (and
hence any)  $g$ within the conformal class.
\item The $W$-decomposition of $\mathcal{R}$ is gauge invariant. $\mathcal{N}$ is Ricci flat if and only if $R=\pi_6(R)$ for some
(and hence any) $g$ within the conformal class.
\end{enumerate}\end{theorem}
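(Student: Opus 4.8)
The plan is to split the theorem, for each of the two decompositions, into its two independent assertions: first, that the induced decomposition of the curvature \emph{operator} $\mathcal{R}$ is gauge invariant, and second, that Ricci flatness is detected by the vanishing of every component except the sixth. I would settle the gauge invariance first and then read off the Ricci-flatness criterion from the Ricci computations already recorded for the $\alpha_i$ and the $\pi_i$.

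For gauge invariance the idea is to track conformal weights. Under a gauge transformation $g\mapsto g_1=e^{2f}g$ the connection, and hence the curvature operator $\mathcal{R}$, is unchanged, while the associated curvature tensor rescales by $R_1=e^{2f}R$. Since $\operatorname{Ric}$ is a gauge invariant one has $\operatorname{Ric}(R_1)=\operatorname{Ric}(R)$, and therefore $\tau_{g_1}=e^{-2f}\tau_g$. I would then inspect the explicit formulas of Lemma~\ref{lem-2.9} and of the corresponding lemma for the $\pi_i$: each $\alpha_i(R)$ and each $\pi_i(R)$ is a sum of terms carrying exactly one net power of the metric, since a factor entering through $\cdot\,\XIx$ or $\wedge_r\XIx$ contributes $+1$, the scalar $\tau_h$ contributes $-1$, and $\operatorname{Ric}$, $\operatorname{Ric}^\star$ contribute $0$. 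Hence every projection has conformal weight $+1$, that is $\alpha_i^{g_1}(R_1)=e^{2f}\alpha_i^{g}(R)$ and likewise for $\pi_i$. Passing from the tensor $\alpha_i(R)$ to the operator $\alpha_i(\mathcal{R})$ raises the last index with $g_1^{-1}=e^{-2f}g^{-1}$, and this factor $e^{-2f}$ cancels the weight; thus $\alpha_i(\mathcal{R})$ and $\pi_i(\mathcal{R})$ are independent of the representative $g$ of the conformal class. A more conceptual way to phrase the same point, which I would mention, is that the submodules $A_i,W_i\subset\otimes^4V^*$ are cut out by trace conditions and built from $\XIx$ via $\cdot$ and $\wedge_r$, all homogeneous in $\XIx$, so as subspaces they coincide for $g$ and $e^{2f}g$; the decomposition of the gauge-invariant operator $\mathcal{R}$ is then automatically gauge invariant.

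The Ricci-flatness criterion I would obtain directly from the Ricci computations. If $R=\alpha_6(R)$ then, since $\operatorname{Ric}(\alpha_6(A))=0$, linearity of $\operatorname{Ric}$ gives $\operatorname{Ric}(R)=0$, so $\mathcal{N}$ is Ricci flat; the same argument works with $\pi_6$. Conversely, if $\operatorname{Ric}(R)=0$ then $S\operatorname{Ric}=0$, $\Lambda\operatorname{Ric}=0$ and $\tau_g=0$, and substituting these into the explicit formulas of Lemma~\ref{lem-2.9} forces $\alpha_1(R)=\alpha_2(R)=\alpha_4(R)=\alpha_5(R)=0$, while $\alpha_3(R)=\alpha_7(R)=\alpha_8(R)=0$ for every $A\in\mathfrak{W}$; hence $R=\alpha_6(R)$. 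The same substitution in the formulas for the $\pi_i$ (equivalently, the Remark stating that $\operatorname{Ric}(A)=0$ forces $\pi_i(A)=0$ for $i\ne6$ and $\pi_6(A)=A$) gives $R=\pi_6(R)$. The clause ``for some (and hence any) $g$'' is then immediate from the gauge invariance just established, since $\mathcal{R}=\alpha_6(\mathcal{R})$ and $\mathcal{R}=\pi_6(\mathcal{R})$ are gauge-invariant statements.

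I expect the only genuinely delicate point to be the weight bookkeeping in the gauge-invariance step: one must verify \emph{uniformly}, across all terms of all the $\alpha_i$ and $\pi_i$, that the net metric weight is $+1$, so that raising the last index cancels it exactly. Everything else reduces to a direct appeal to the explicit projection formulas and the Ricci computations already available, so no further calculation is required.
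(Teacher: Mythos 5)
Your proposal is correct and follows essentially the same route as the paper: the paper's proof consists precisely of the observations in Section \ref{sect-3.2} (the connection and hence $\mathcal{R}$ are unchanged under gauge transformations, $O(V,g)=O(V,c\cdot g)$ pointwise so the decompositions of $R$ and $\mathcal{R}$ correspond bijectively and are gauge invariant, and $\operatorname{Ric}$ is a $\operatorname{GL}$ invariant), combined with reading off Ricci flatness from the Ricci computations for the $\alpha_i$ and $\pi_i$. Your conformal-weight bookkeeping simply makes explicit the step the paper declares ``obviously gauge invariant,'' so it is a more detailed write-up of the same argument rather than a different one.
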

\subsection{The second Bianchi identity}
We recall the following well known result from \cite{E}, p. 56:
{\it The curvature tensor of any torsion free connection satisfies the second Bianchi identity.} However, since $\nabla_k
\,g^{rs} = 2\phi_k \,g^{rs}$, raising and lowering indices need not commute with $\nabla$-covariant differentiation. One
therefore has:
\begin{lemma}\label{lem-3.3}
Let $\mathcal{N}=(N,g,\nabla)$ be a Weyl manifold and let $R=R(\nabla)$. Then:\begin{enumerate}
\item 
$\nabla_m\, \mathcal{R}_{ijk}{}^{l} + \nabla_i\,\mathcal{R}_{jmk}{}^{l} + \nabla_j\,\mathcal{R}_{mik}{}^{l} = 0$.
\item $
\nabla_m\,R_{ijkl} + \nabla_i\,R_{jmkl} + \nabla_j\,R_{mikl} =
-2( \phi_m\,R_{ijkl}  + \phi_i\,R_{jmkl}    + \phi_j\,R_{mikl} )$.
\end{enumerate}\end{lemma}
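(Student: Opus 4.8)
The plan is to treat the two assertions separately, since assertion (1) is purely affine while assertion (2) carries the metric-dependent correction. Assertion (1) is nothing more than the component form of the classical second Bianchi identity, which holds for the curvature operator of \emph{any} torsion-free connection and which has just been recalled from \cite{E}. Since $\mathcal{R}$ is built solely from $\nabla$ and makes no reference to $g$, no non-metricity correction can enter, and the identity holds verbatim. If one wished to reprove it from scratch, the cleanest route is to fix $P\in N$, choose affine normal coordinates in which the connection coefficients vanish at $P$, and observe that there the cyclic sum of $\nabla_m\mathcal{R}_{ijk}{}^{l}$ reduces to a cyclic sum of third derivatives that cancels by symmetry of mixed partials.

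For assertion (2) I would pass from $\mathcal{R}$ to $R$ by lowering the raised index with the metric. Recalling $R(x,y,z,w)=g(\mathcal{R}(x,y)z,w)$, in components this reads $R_{ijkl}=g_{sl}\,\mathcal{R}_{ijk}{}^{s}$. Applying $\nabla_m$ and using the Leibniz rule gives
$$\nabla_m R_{ijkl}=(\nabla_m g_{sl})\,\mathcal{R}_{ijk}{}^{s}+g_{sl}\,\nabla_m\mathcal{R}_{ijk}{}^{s}.$$
The defining Weyl relation $\nabla g=-2\phi\otimes g$ reads $\nabla_m g_{sl}=-2\phi_m g_{sl}$ in components, so the first term is $-2\phi_m g_{sl}\,\mathcal{R}_{ijk}{}^{s}=-2\phi_m R_{ijkl}$, and therefore
$$\nabla_m R_{ijkl}=-2\phi_m R_{ijkl}+g_{sl}\,\nabla_m\mathcal{R}_{ijk}{}^{s}.$$

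Finally I would form the cyclic sum over the first three indices $(m,i,j)$. The non-metricity contributions assemble into exactly $-2(\phi_m R_{ijkl}+\phi_i R_{jmkl}+\phi_j R_{mikl})$, the desired right-hand side, while the remaining terms give $g_{sl}$ times the cyclic sum $\nabla_m\mathcal{R}_{ijk}{}^{s}+\nabla_i\mathcal{R}_{jmk}{}^{s}+\nabla_j\mathcal{R}_{mik}{}^{s}$, which vanishes by assertion (1). There is no genuine obstacle here; the computation is routine once one recognizes that non-metricity is the \emph{only} source of the correction term. The single point requiring care is the index bookkeeping: because the contracted index $s$ occupies the raised (fourth) slot, which is untouched by the cyclic permutation of $(m,i,j)$, the factor $g_{sl}$ pulls out uniformly from all three summands, so that assertion (1) applies directly and the $\nabla\mathcal{R}$ contributions cancel cleanly.
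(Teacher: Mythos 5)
Your proof is correct and takes essentially the same route as the paper: assertion (1) is the classical second Bianchi identity for torsion-free connections (the paper cites Eisenhart for this), and assertion (2) follows by lowering the index via the Leibniz rule together with $\nabla_m g_{sl}=-2\phi_m g_{sl}$, exactly as the paper's preceding discussion indicates. Your observation that the contracted index sits in the fourth slot, untouched by the cyclic permutation, is the right bookkeeping point and is implicit in the paper's treatment.
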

The following result is a consequence of Theorem \ref{thm-1.1} and of Lemma \ref{lem-3.3}:
\begin{corollary}
We have the  relations: 
\begin{eqnarray*}
&& \nabla_mR_{jk} \; - \; \nabla_jR_{mk} \; + \; \nabla_iR_{jmk}{}^i \;\; = 0,\\
&&\nabla_m(\Lambda R_{jk}) + \nabla_j(\Lambda R_{km})  + \nabla_k(\Lambda R_{mj}) = 0.
\end{eqnarray*}
\end{corollary}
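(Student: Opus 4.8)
The plan is to obtain both identities by contracting the second Bianchi identity for the \emph{curvature operator}, that is, Assertion (1) of Lemma \ref{lem-3.3},
$$\nabla_m \mathcal{R}_{ijk}{}^l + \nabla_i \mathcal{R}_{jmk}{}^l + \nabla_j \mathcal{R}_{mik}{}^l = 0,$$
in two different ways. The decisive advantage of using this operator form rather than the all-lower form of Assertion (2) is twofold: it carries no $\phi$-terms, and, although raising and lowering indices does not commute with $\nabla$ here (since $\nabla g\ne 0$), the contraction of a mixed tensor over one upper and one lower slot does commute with covariant differentiation for \emph{any} connection. Thus I can carry out each contraction termwise and then recognize each summand as the covariant derivative of a Ricci-type tensor.

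For the first identity I would set the output index $l$ equal to $i$ and sum. Because contraction commutes with $\nabla$, the first summand becomes $\nabla_m(\mathcal{R}_{ijk}{}^i)=\nabla_m R_{jk}$, using $\mathcal{R}_{ijk}{}^i=\operatorname{Ric}(e_j,e_k)=R_{jk}$. The third summand becomes $\nabla_j(\mathcal{R}_{mik}{}^i)$, and here the antisymmetry $R_{mikl}=-R_{imkl}$ of Equation (\ref{eqn-1.b}) identifies $\mathcal{R}_{mik}{}^i=-R_{mk}$, so this term equals $-\nabla_j R_{mk}$. The middle summand is the divergence term $\nabla_i \mathcal{R}_{jmk}{}^i=\nabla_i R_{jmk}{}^i$, left unchanged. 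Adding the three yields exactly the first asserted relation; note that this step uses only Equation (\ref{eqn-1.b}), not yet Theorem \ref{thm-1.1}.

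For the second identity I would instead set $l=k$ and sum. The key is to evaluate the contraction $\mathcal{R}_{ijk}{}^k=g^{kl}R_{ijkl}$, and this is precisely where Theorem \ref{thm-1.1} enters: contracting Equation (\ref{eqn-1.e}), $R_{ijkl}+R_{ijlk}=-\tfrac4n(\Lambda\operatorname{Ric})_{ij}\,g_{kl}$, with $g^{kl}$ and using $g^{kl}g_{kl}=n$ gives $g^{kl}R_{ijkl}=-2\,\Lambda R_{ij}$. Applying this to each of the three summands (with input pairs $(i,j),(j,m),(m,i)$) and again using that contraction commutes with $\nabla$, the contracted Bianchi identity becomes $-2[\nabla_m \Lambda R_{ij}+\nabla_i \Lambda R_{jm}+\nabla_j \Lambda R_{mi}]=0$; dividing by $-2$ and relabelling $(m,i,j)\mapsto(m,j,k)$ gives the second asserted relation.

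The computations are routine; the only point requiring real care---and the single place where Theorem \ref{thm-1.1} is genuinely needed---is the evaluation of the non-standard contraction $g^{kl}R_{ijkl}$, which in Weyl geometry does not vanish but produces the antisymmetric Ricci tensor $\Lambda\operatorname{Ric}$. (This also has the conceptual reading that $\Lambda\operatorname{Ric}$ is a closed $2$-form for the torsion-free connection $\nabla$, the second relation being its cyclic/exterior-derivative form.) I would keep the operator form throughout, so as never to reintroduce the $\phi$-terms present in Assertion (2) of Lemma \ref{lem-3.3}.
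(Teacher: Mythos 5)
Your proof is correct and follows exactly the route the paper intends: the paper states this corollary with no written proof, merely as ``a consequence of Theorem \ref{thm-1.1} and of Lemma \ref{lem-3.3}'', and your two contractions of the operator form of the second Bianchi identity (over $l=i$ for the first relation, and over $l=k$ together with $g^{kl}R_{ijkl}=-2\,\Lambda R_{ij}$ from Equation (\ref{eqn-1.e}) for the second) are precisely the natural way to make that consequence explicit. Your supporting observations---that contracting a mixed upper/lower index pair commutes with $\nabla$ for any connection, so the $\phi$-terms of Lemma \ref{lem-3.3}(2) never appear, and that only the second relation genuinely requires Theorem \ref{thm-1.1}---are both accurate.
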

\subsection{The canonical  Weyl metric}
Let $\mathcal{W} = (N,g,\phi)$ be a Weyl manifold. If $g_1: = e^{2f}g$ is a conformally
equivalent metric, $\tau_{g_1}:= e^{-2f}\tau_g$. Thus  there is a gauge invariant, disjoint
decomposition of $N$ into three 
 subsets
$N = N_0 \cup N^+ \cup N^-$ where  
$$N_0 := \{p\in N \,:\, \tau_g(p) = 0\}\quad\text{and}\quad 
N^\pm := \{p\in N\, :\,\pm\tau_g(p) > 0\}\,.
$$ In the following we consider Weyl manifolds
with
$ \tau_g \ne 0$ and restrict to the case $\tau_g >  0$, thus $N = N^+$; the case $\tau_g<0$ can be handled
analogously. We recall a definition of Higa
\cite{H94}; his definition and our definition differ by a constant
positive factor; Higa calls his  metric  the {\it canonical metric}; we use the following terminology:
\begin{definition}
\rm\ \begin{enumerate}
\item Let $\mathcal{W} = (N,g,\nabla)$ be a Weyl manifold with $\tau_g >  0$.
We call the {\it gauge invariant metric} 
$\tilde g : = \tau_g\cdot g$
the {\it canonical Weyl metric} of $\mathcal{W}$. The metric $\tilde g$ is the unique element in the conformal class
defined by $g$ so that $\tau_{\tilde g}=1$.
\item  Let $(g,\phi)$ be a pair that generates $\mathcal{W}$ with $\tau_g > 0$. We call
$\tilde\phi := \phi - \tfrac{1}{2} d \ln \tau_g$
the gauge invariant Weyl 1-form of  $\mathcal{W}$; $\nabla\tilde g=-2\tilde\phi\otimes\tilde g$.
\end{enumerate}
\end{definition}
\begin{remark}
\rm The definition of the canonical Weyl metric is 
gauge invariant (thus $\tilde g$ is a distinguished metric
 in the conformal class) and therefore  all invariants of its
induced semi-Riemannian geometry are gauge invariant; the analogue is true
for invariants constructed from the pair $(\tilde g, \tilde\phi)$.
\end{remark}
\subsection{Curvature invariants of the Weyl  metric}\label{sect-3.5}
Throughout this section, we assume $\tau_g>0$ and let $\tilde g:=\tau_g\cdot g$ be the Weyl metric. We restrict to
the Riemannian setting i.e. $g$ is positive definite. We introduce the following notational conventions:
\begin{definition}
Let $g$ be a Riemannian metric. Let $\Xi_g:=\det(g_{ij})^{1/2}$.\begin{enumerate}
\item Let $\Delta_g:=\Xi_g^{-1}\partial_{x_i}\Xi_g g^{ij}\partial_{x_j}$ be the Laplace-Beltrami operator.
\item Let $\kappa_g:=\textstyle\frac1{n(n-1)}\operatorname{Tr}_g\operatorname{Ric}_g$ be the normalized scalar curvature of $g$.
\item Let $\omega_g=\Xi_g dx^1\dots dx^n$ be the Riemannian measure.
\end{enumerate}
\end{definition}
The following result gives the {\it conformal scalar curvature relations}:
\begin{proposition}\label{prop-3.8}
\ \begin{enumerate}
\item $n\kappa_{\tilde g} = n\tau_g^{-1}\kappa_g - \tau_g^{-2}\Delta_g\,\tau_g 
-\tfrac14 \tau_g^{-3}\,(n-6)\,\|\grad_g\tau_g\|_g^2$.
\item $1 = \tau_{\tilde g}=
n(n-1)\kappa_{\tilde g} - 2(n-1)\nabla^{\tilde g}_k\tilde\phi^k -(n-1)(n-2) \|\tilde\phi\|_{\tilde g}^2$.
\end{enumerate}\end{proposition}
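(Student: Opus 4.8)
\emph{Overview and proof of (1).} I would treat the two parts by different mechanisms. Assertion (1) is purely Riemannian: it is the classical conformal variation of the Levi--Civita scalar curvature applied to the conformal factor $\tau_g$. Starting from the standard formula for $\tilde g=e^{2f}g$,
$$\operatorname{Tr}_{\tilde g}\operatorname{Ric}_{\tilde g}=e^{-2f}\bigl(\operatorname{Tr}_g\operatorname{Ric}_g-2(n-1)\Delta_g f-(n-1)(n-2)\|\grad_g f\|_g^2\bigr),$$
I would set $f=\tfrac12\ln\tau_g$, so $e^{-2f}=\tau_g^{-1}$ and $\grad_g f=\tfrac12\tau_g^{-1}\grad_g\tau_g$. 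Using $\Delta_g\ln\tau_g=\tau_g^{-1}\Delta_g\tau_g-\tau_g^{-2}\|\grad_g\tau_g\|_g^2$ gives $\Delta_g f=\tfrac12\bigl(\tau_g^{-1}\Delta_g\tau_g-\tau_g^{-2}\|\grad_g\tau_g\|_g^2\bigr)$. Substituting, writing $\operatorname{Tr}_g\operatorname{Ric}_g=n(n-1)\kappa_g$ and $\operatorname{Tr}_{\tilde g}\operatorname{Ric}_{\tilde g}=n(n-1)\kappa_{\tilde g}$, and dividing by $n-1$ yields the claim. The one point requiring care is the coefficient of $\tau_g^{-3}\|\grad_g\tau_g\|_g^2$: the $+1$ contributed by the $\Delta_g\ln$ identity combines with the $-\tfrac{n-2}4$ from the gradient term to give $1-\tfrac{n-2}4=-\tfrac14(n-6)$, which is precisely the stated factor.

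\emph{Proof of (2).} By construction $\tilde g=\tau_g\cdot g$ is the unique metric in the conformal class with $\tau_{\tilde g}=1$, giving the first equality. For the second I would establish the general comparison, valid for any generating pair $(g,\phi)$ of a Weyl manifold,
$$\tau_g=n(n-1)\kappa_g-2(n-1)\nabla^g_k\phi^k-(n-1)(n-2)\|\phi\|_g^2,$$
and then specialise it to $(\tilde g,\tilde\phi)$, where $\nabla\tilde g=-2\tilde\phi\otimes\tilde g$ and $\tau_{\tilde g}=1$. The natural route to the comparison is to insert the connection formula of Theorem \ref{thm-3.1}(2), namely $\nabla=\nabla^g+S$ with $S(x,y)=\phi(x)y+\phi(y)x-g(x,y)\phi^\sharp$, into the definition of $\mathcal R$ and contract twice to reach $\tau_g=g^{ij}\operatorname{Ric}_{ij}$.

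\emph{Main obstacle and a shortcut.} The bottleneck is exactly this double contraction of $\mathcal R^{\nabla}=\mathcal R^{\nabla^g}+\nabla^g S-(\cdots)+S\!\cdot\!S$, with all $\nabla^g\phi$-- and $\phi\!\otimes\!\phi$--terms kept track of with correct signs. I would sidestep most of it as follows. Scaling $\phi\mapsto\lambda\phi$ sends $S\mapsto\lambda S$, so $\tau_g-n(n-1)\kappa_g$ splits into a piece linear in $\nabla^g\phi$ (from $\nabla^g S$) and a piece quadratic in $\phi$ (from $S\!\cdot\!S$), with no cross term. As these pieces are $O(V,g)$--invariant, and the only invariant scalar linear in the $2$--tensor $\nabla^g\phi$ is a multiple of its trace $\nabla^g_k\phi^k$, while the only one quadratic in $\phi$ is a multiple of $\|\phi\|_g^2$, we must have $\tau_g=n(n-1)\kappa_g+c_1\nabla^g_k\phi^k+c_2\|\phi\|_g^2$ for universal constants $c_1,c_2$. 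To fix them it suffices to test the locally exact case $\phi=df$: then Theorem \ref{thm-3.1}(2) shows $\nabla=\nabla^{g_1}$ with $g_1=e^{2f}g$, so $\operatorname{Ric}(\nabla)=\operatorname{Ric}_{g_1}$ and $\tau_g=g^{ij}(\operatorname{Ric}_{g_1})_{ij}=e^{2f}\operatorname{Tr}_{g_1}\operatorname{Ric}_{g_1}$; feeding this into the conformal formula from part (1) gives $c_1=-2(n-1)$ and $c_2=-(n-1)(n-2)$, which completes the proof.
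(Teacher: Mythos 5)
Your proof is correct, but it reaches both formulas by a genuinely different mechanism than the paper. For Assertion (1) the paper deliberately avoids quoting the classical conformal transformation law: it posits $\kappa_{g_1}=\vartheta^{-1}\kappa_g+a_1\vartheta^{a_2}\Delta_g\vartheta+a_3\vartheta^{a_4}\|\grad_g \vartheta\|_g^2$ with undetermined universal constants and fixes them by computing Christoffel symbols and curvature for a flat metric with $\vartheta=\vartheta(x_1)$; you instead cite the standard transformation law for $\tilde g=e^{2f}g$ and substitute $f=\tfrac12\ln\tau_g$, which is precisely the ``classical'' route the authors acknowledge (``these formulas can, of course, be derived from classical formulas in the literature'') but decline in favor of a self-contained derivation. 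For Assertion (2) both you and the paper first argue that $\tau_g-n(n-1)\kappa_g$ must have the universal form $c_1\nabla^g_k\phi^k+c_2\|\phi\|_g^2$ --- your scaling-plus-invariant-theory argument is in fact a more explicit justification than the paper's ``a bit of thought shows'' --- and the difference lies in how the constants are pinned down. The paper inserts the connection formula of Theorem \ref{thm-3.1}(2) into the curvature and evaluates directly on the flat universal example $\phi=\vartheta(x_1)dx^1$; you instead reduce to the exact case $\phi=df$, where by gauge invariance the Weyl connection is the Levi-Civita connection of $e^{2f}g$, so both constants follow from the conformal formula of part (1) with no curvature computation for a non-metric connection at all. (Amusingly, the paper's test form $\vartheta(x_1)dx^1$ is itself exact, so the two test classes coincide; only the method of evaluation differs.) Your route buys economy --- one classical formula serves both assertions and the Weyl-connection computation is bypassed entirely --- at the cost of being less self-contained, while the paper's route independently re-derives the classical coefficients. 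One small point you should make explicit: to extract both $c_1$ and $c_2$ from the exact case you need that $\Delta_g f$ and $\|\grad_g f\|_g^2$ can be prescribed independently at a point; this is immediate (prescribe the $2$-jet of $f$ in normal coordinates) but deserves a sentence, since otherwise testing on a restricted class of $1$-forms would not obviously determine two constants.
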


\begin{proof} These formulas can, of course, be derived from classical formulas in the literature. It is instructive, however, to
give a direct derivation. We argue as follows to prove Assertion (1). Let $g$ be a Riemannian metric. Let $\vartheta$ be a positive
smooth function on
$M$ and let
$g_1:=\vartheta g$. We must express $\kappa_{g_1}$ in terms of $\kappa_g$. Choose local coordinates so the Christoffel symbols of
$\nabla^g$ vanish at the point
$P$ in question. A bit of thought then expresses $R^{g_1}=R^g+\mathcal{E}_1+\mathcal{E}_2$ at $P$ where $\mathcal{E}_1$ is
quadratic in the first derivatives of $\vartheta$ and $\mathcal{E}_2$ is linear in the second derivatives of $\vartheta$ at $P$,
respectively. This leads to a formula of the form:
\begin{equation}\label{eqn-3.a}
\kappa_{g_1}=\vartheta^{-1}\kappa_g+a_1\vartheta^{a_2}\Delta_g\vartheta+a_3\vartheta^{a_4}\|\grad_g \vartheta\|_g^2
\end{equation}
where $a_i=a_i(n)$ are certain universal constants which depend on the dimension, but not on the metric chosen, and which need to
be determined; Assertion (1) will then follow by specializing to the case $\vartheta=\tau_g$. To determine these constants
in the general setting, we may take
$g=dx_1^2+...+dx_n^2$ to be flat. We take as a special case
$\vartheta=\vartheta(x_1)$ and determine the coefficients in Equation (\ref{eqn-3.a}) by computing:
\medbreak\qquad
$\Gamma_{111}=\Gamma_{1ii}=\Gamma_{i1i}=-\Gamma_{ii1}=\frac12\vartheta^\prime$ for $1<i$,
\smallbreak\qquad
$\mathcal{R}^{g_1}(\partial_1,\partial_i)\partial_i=\{-\frac12\vartheta^{-1}\vartheta^{\prime\prime}+\frac12\vartheta^{-2}\vartheta^\prime
\vartheta^\prime\}\partial_1$ for
$1<i$,
\smallbreak\qquad
$\mathcal{R}^{g_1}(\partial_i,\partial_j)\partial_j=-\frac14\vartheta^{-2}\vartheta^\prime \vartheta^\prime\partial_i$ for $1<i<j$,
\smallbreak\qquad
$\operatorname{Tr}_{g_1}\operatorname{Ric}_{g_1}
=-(n-1)\vartheta^{-2}\vartheta^{\prime\prime}+\vartheta^{-3}\{(n-1)-\frac14(n-1)(n-2)\}\vartheta^\prime
\vartheta^\prime$
\smallbreak\qquad\qquad
$=(n-1)\{-\vartheta^{-2}\Delta_g\vartheta-\frac{n-6}4\vartheta^{-3}\|\grad_g\vartheta\|_g^2\}$,
\smallbreak\qquad
$n\kappa_{g_1}=-\vartheta^{-2}\Delta_g\vartheta-\frac{n-6}4\vartheta^{-3}\|\grad_g\vartheta\|_g^2\,.$
\medbreak\noindent This completes the proof of Assertion (1) by showing:
$$a_1=-1,\quad a_2=-2,\quad a_3=-\textstyle\frac{n-6}4,\quad a_4=-3\,.$$

\medbreak Let $(M,g,\nabla)$ be an arbitrary Weyl manifold. We use a similar argument to prove Assertion (2). Again, a bit of
thought shows there are universal constants so
\begin{equation}\label{eqn-3.b}
\tau_g=n(n-1)\kappa_g+a_5\nabla_k\phi^k+a_6\|\phi\|_g^2
\end{equation}
where $a_i=a_i(n)$; Assertion (2) will follow by taking the special case where the metric is $\tilde g$. Again, we evaluate these
constants using the method of universal examples. We take the reference background metric to be flat and $\phi=\vartheta(x_1)dx^1$
for some smooth function $\vartheta$ of one variable. We evaluate the universal coefficients in Equation (\ref{eqn-3.b})
and complete the proof of Assertion (2) by calculating:
\medbreak\qquad
$\nabla_xy=\nabla^g_xy + \phi(x)y + \phi(y)x - 
 g(x,y)\phi^{\sharp}$,
\smallbreak\qquad
$\Gamma_{11}{}^1=\Gamma_{1i}{}^i=\Gamma_{i1}{}^i=-\Gamma_{ii}{}^1=\vartheta$ for $1<i$,
\smallbreak\qquad
$\mathcal{R}(\partial_1,\partial_i)\partial_i=-\vartheta^\prime\partial_1$ for $1<i$,
\smallbreak\qquad
$\mathcal{R}(\partial_i,\partial_j)\partial_j=-\vartheta^2\partial_i$ for $1<i<j$,
\smallbreak\qquad
$\tau_g=-2(n-1)\vartheta^\prime-(n-1)(n-2)\vartheta^2$
\smallbreak\qquad\phantom{...}
$=-2(n-1)\nabla_k^g\phi^k-(n-1)(n-2) \|\phi\|_g^2$.
\medbreak\noindent This completes the proof by showing $a_5=-2(n-1)$ and $a_6=-(n-1)(n-2)$.
\end{proof}

We defined the Weyl metric $\tilde g$ by requiring that $\tilde g$ is in the conformal class of $g$ and so that $\tau_{\tilde g}=1$. 
Conversely, of course, if $g_1$ is in the conformal class of $g$ and if $\tau_{\tilde g}=c$ is constant, then $g_1$
is homothetic to
$\tilde g$. We note that the associated Weyl-Schouten tensor is a gauge invariant where
$$\tilde{\sigma}= \, \tfrac{1}{n-2} \left[ S\operatorname{Ric} - \tfrac{1}{2(n-1)} \;  \tilde g \right]\,.$$
\subsection{Global characterizations of the Weyl metric} Again, throughout this section, we assume $\tau_g > 0$ and
normalize the metric so $\tau_{\tilde g}=1$.
We apply the relations given in Proposition \ref{prop-3.8} to obtain global characterizations
of the Weyl metric within the conformal class in terms of the scalar curvatures
$\kappa_{\tilde g}, \kappa_g$ and $\tau_g$. We assume $g$ is positive definite. We first examine the compact case:

\begin{theorem} Let $N$ be a compact Riemannian manifold. 
The gauge invariant total scalar curvature of $\tilde g$ gives an
upper bound for the right hand side in terms of an arbitrary metric $g$ within the conformal class, i.e.
$$\int_N \kappa_{\tilde g} \; \omega_{\tilde g}  \ge \int_N \kappa_g \cdot \tau_g^{\tfrac{n-2}{2}}\;\omega_g\,.$$
Equality holds if and only if $\tilde g = \tau_g \cdot g$ with $\tau_g = const.$, i.e. both metrics are
homothetic.
\end{theorem}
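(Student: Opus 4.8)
The plan is to derive the inequality from Assertion (1) of Proposition \ref{prop-3.8} by integrating over the compact manifold $N$ and using the fact that the total integral of a Laplacian vanishes. First I would write down the conformal scalar curvature relation with $\vartheta = \tau_g$, so that $\tilde g = \tau_g \cdot g$ and
$$
n\kappa_{\tilde g} = n\tau_g^{-1}\kappa_g - \tau_g^{-2}\Delta_g\tau_g - \tfrac14(n-6)\tau_g^{-3}\|\grad_g\tau_g\|_g^2\,.
$$
Next I would relate the two Riemannian measures. Since $\tilde g = \tau_g\, g$ is a pointwise scaling of an $n$-dimensional metric, one has $\Xi_{\tilde g} = \tau_g^{\,n/2}\,\Xi_g$, hence $\omega_{\tilde g} = \tau_g^{\,n/2}\,\omega_g$. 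Multiplying the displayed identity through by $\tfrac1n\,\omega_{\tilde g} = \tfrac1n\,\tau_g^{\,n/2}\,\omega_g$ converts the left side into the integrand $\kappa_{\tilde g}\,\omega_{\tilde g}$ and the first term on the right into exactly $\kappa_g\,\tau_g^{(n-2)/2}\,\omega_g$, which is the desired right-hand integrand.

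It then remains to control the two correction terms after integration. For the Laplacian term, I would rewrite $\tau_g^{\,n/2}\cdot\tau_g^{-2}\Delta_g\tau_g = \tau_g^{(n-4)/2}\Delta_g\tau_g$ and integrate by parts on the closed manifold $N$ (where $\int_N \Delta_g u\,\omega_g = 0$ and more generally $\int_N v\,\Delta_g u\,\omega_g = -\int_N \langle \grad_g v,\grad_g u\rangle_g\,\omega_g$). Integration by parts against $v = \tau_g^{(n-4)/2}$ produces a gradient term $\tfrac{n-4}2\,\tau_g^{(n-6)/2}\|\grad_g\tau_g\|_g^2$. Combining this with the already-present gradient term $-\tfrac14(n-6)\tau_g^{(n-6)/2}\|\grad_g\tau_g\|_g^2$, the two collapse into a single multiple of $\int_N \tau_g^{(n-6)/2}\|\grad_g\tau_g\|_g^2\,\omega_g$. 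The key point I expect is that the resulting coefficient has a definite sign, so that this combined term has a fixed sign and the inequality
$$
\int_N \kappa_{\tilde g}\,\omega_{\tilde g} \ge \int_N \kappa_g\,\tau_g^{(n-2)/2}\,\omega_g
$$
follows, with the integrand being manifestly nonnegative since $\tau_g>0$ and $g$ is positive definite.

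\textbf{The main obstacle} I anticipate is precisely bookkeeping the coefficient after integration by parts: one must verify that the exponents of $\tau_g$ in the integration-by-parts term and in the original gradient term match (they do, both being $(n-6)/2$) and that the two numerical coefficients combine to a term of the correct sign for \emph{all} $n\ge 3$. This is the delicate algebraic step on which the direction of the inequality hinges. Finally, for the equality statement, I would observe that the combined gradient integral vanishes if and only if $\grad_g\tau_g \equiv 0$ on the connected components, i.e. $\tau_g$ is locally constant; since $\tilde g = \tau_g\cdot g$, this is exactly the condition that $\tilde g$ and $g$ be homothetic, giving the asserted equality case.
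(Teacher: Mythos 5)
Your proposal is correct and takes essentially the same route as the paper: the paper also starts from Proposition \ref{prop-3.8}(1), uses $\omega_{\tilde g}=\tau_g^{n/2}\,\omega_g$, and eliminates the Laplacian term by the divergence theorem (it integrates the identity for $\Delta_g\tau_g^{(n-2)/2}$, which is exactly your integration by parts with $v=\tau_g^{(n-4)/2}$), arriving at the combined coefficient $\tfrac{n-4}{2}-\tfrac{n-6}{4}=\tfrac{n-2}{4}>0$ for all $n\ge 3$. So the arithmetic you flag as the delicate point does close in your favor, and the equality case ($\grad_g\tau_g\equiv 0$, hence $\tau_g$ constant and the metrics homothetic) is handled just as in the paper.
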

\begin{proof} The Riemannian volume forms are related by the identity: 
$\omega_{\tilde g}  = \tau_g^{\tfrac{n}{2}} \cdot \omega_g$.
This leads to the relation:
\begin{equation}\label{eqn-3.c}
\begin{array}{l}\displaystyle n \int_N \kappa_{\tilde g} \; \omega_{\tilde g}  = n \int_N \kappa_g 
\;\tau_g^{\tfrac{n-2}{2}}
\;
\omega_g -
\int_N \tau_g^{\tfrac{n-4}{2}}
\Delta_g
\tau_g  \;\omega_g\\
\qquad\qquad\qquad\displaystyle - \tfrac{1}{4} (n-6) \int_N \tau_g^{\tfrac{n-6}{2}} \cdot \|\grad_g\tau_g
\|_g^2 
\;
\omega_g\,.\end{array}
\end{equation}
Let $\alpha:=\frac12(n-2)$. We then have:
\begin{equation}\label{eqn-3.d}
\Delta_g \tau_g ^{\alpha} = \alpha(\alpha -1)
 \tau_g^{\alpha - 2} \; \|\grad_g\tau_g \|_g^2 + \alpha
\tau_g^{\alpha - 1} 
\;\Delta_g
\tau_g
\,.
\end{equation}
We use Equation (\ref{eqn-3.d}) to see that:
\begin{equation}\label{eqn-3.e}
0 = \int_N \tau_g^{\tfrac{1}{2}(n-4)} \; \Delta_g \tau_g \; \omega_g + 
\tfrac{1}{2}(n-4) \int_N \tau_g^{\tfrac{n-6}{2}}\; \|\grad_g\tau_g   \|_g^2 \;\omega_g.
\end{equation}
Similarly  Equation (\ref{eqn-3.c}) and Equation (\ref{eqn-3.e}) give:
$$n\, \int_N \kappa_{\tilde g} \;\omega_{\tilde g}  = n\, \int_N \kappa_g\; 
 \tau_g^{\tfrac{n-2}{2}}\;\omega_g + \tfrac{1}{4}\,(n-2)\,\int_N \tau_g^{\tfrac{1}{2}(n-6)}\; \| \grad_g
\tau_g   \|_g^2
\;\omega_g. 
$$
As $\tau_g > 0  $ by assumption, this implies the inequality; the discussion of equality then follows immediately. 
\end{proof}

Next, we study complete manifolds. In dimension $n \le 6$, the foregoing relations allow to apply
the well known maximum principle of Omori \cite{O67} and Yau \cite{Y75} and also
Yau's \cite{Y76} extension of the harmonic map principle to complete manifolds.
\begin{theorem}
Let $3 \le n \le 6$. Assume that the Riemannian manifold $(N,g)$ is complete with non-negative Ricci tensor
$\operatorname{Ric}_g$. Assume that $\tau_g \in L^p$ for some $p>1$.
\begin{enumerate}
\item If the scalar curvatures satisfy the inequality
$\kappa_{\tilde g} - \tau_g^{-1}\kappa_g \le 0$  then $\tilde g = \tau_g \cdot g$ 
with $\tau_g = const$,
i.e. both metrics are homothetic.
\item 
If there exists $0 \le c \in \mathbb{R}$ with $\kappa_{\tilde g} - \tau_g^{-1}\kappa_g= c$, then
$\tilde g = \tau_g \cdot g$ with $\tau_g = const$, i.e. both metrics are homothetic.
\end{enumerate}\end{theorem}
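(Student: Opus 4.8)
The plan is to read the conformal scalar curvature relation of Proposition \ref{prop-3.8}(1) as an elliptic equation for $\tau_g$ and then to invoke the two analytic tools quoted just above the statement. Writing $E:=\kappa_{\tilde g}-\tau_g^{-1}\kappa_g$ and clearing the negative powers of $\tau_g$, Proposition \ref{prop-3.8}(1) becomes
$$\Delta_g\tau_g = -\,n\,\tau_g^2\,E \;+\; \tfrac14(6-n)\,\tau_g^{-1}\,\|\grad_g\tau_g\|_g^2\,.$$
For Assertion (1) I would argue directly on $\tau_g$. Since $n\le 6$ the coefficient $\tfrac14(6-n)$ is non-negative, and since $E\le0$ the term $-n\tau_g^2E$ is non-negative as well; hence $\Delta_g\tau_g\ge0$, i.e. $\tau_g$ is a non-negative subharmonic function. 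As $(N,g)$ is complete and $\tau_g\in L^p$ for some $p>1$, Yau's $L^p$-Liouville theorem (his extension of the harmonic map principle) forces $\tau_g$ to be constant, so $\tilde g=\tau_g\,g$ is homothetic to $g$. Note that here only completeness and the $L^p$-bound are used; the non-negativity of $\operatorname{Ric}_g$ is not yet needed.

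For Assertion (2) the sign of $\Delta_g\tau_g$ is no longer manifest, because with $E=c\ge0$ the two terms on the right have opposite signs. To remove the troublesome gradient term I would pass to the conformal variable $v:=\tau_g^{(n-2)/4}$, whose exponent is chosen precisely so that the $\|\grad_g\tau_g\|_g^2$-contribution cancels. A short computation (using $\tau_g^{(n+2)/4}=v^{(n+2)/(n-2)}$) turns the displayed equation into the Yamabe-type equation
$$\Delta_g v \;=\; -\,\tfrac{n(n-2)}{4}\,c\,\,v^{(n+2)/(n-2)}\,,$$
so that $v>0$ is superharmonic. Moreover $v\in L^{p'}$ with $p'=4p/(n-2)$, and because $n\le6$ and $p>1$ one has $p'>1$; thus Yau's $L^{p'}$-Liouville theorem is available for $v$ whenever $v$ happens to be subharmonic. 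If $c=0$ this applies at once: $v$ is harmonic, hence constant, and we are done. It therefore remains to exclude $c>0$, and this is where $\operatorname{Ric}_g\ge0$ enters.

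To rule out $c>0$ I would invoke the Omori–Yau maximum principle, which is valid on the complete manifold $(N,g)$ precisely because $\operatorname{Ric}_g\ge0$. Applied to the bounded-below function $v$ it produces a sequence $x_k$ along which $v(x_k)\to\inf_N v$, $\|\grad_g v(x_k)\|_g\to0$ and $\liminf_k\Delta_g v(x_k)\ge0$; combined with $\Delta_g v\le0$ this gives $\Delta_g v(x_k)\to0$, hence $c\,v(x_k)^{(n+2)/(n-2)}\to0$. The aim is to upgrade this to the conclusion $c=0$ (equivalently, to $\grad_g\tau_g\equiv0$), after which Assertion (2) collapses onto the already-settled harmonic case. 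I expect this superharmonic case to be the genuine obstacle: a positive superharmonic $L^{p'}$ function need not be constant, so the argument must exploit the full strength of $\operatorname{Ric}_g\ge0$ together with the integrability of $\tau_g$---for instance through a Stokes/Gaffney-type vanishing $\int_N\Delta_g v\,\omega_g=0$, which would immediately give $c\int_N v^{(n+2)/(n-2)}\,\omega_g=0$ and hence $c=0$. Justifying such a vanishing, i.e. controlling $v$ and $\grad_g v$ at infinity strongly enough, is the delicate point where the hypotheses $3\le n\le6$, $p>1$ and $\operatorname{Ric}_g\ge0$ must all be used in concert.
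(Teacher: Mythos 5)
Your treatment of Assertion (1) is correct and is exactly the paper's argument: Proposition \ref{prop-3.8}(1) rearranges to $\Delta_g\tau_g=-n\,\tau_g^2E+\tfrac14(6-n)\,\tau_g^{-1}\|\grad_g\tau_g\|_g^2$ with $E:=\kappa_{\tilde g}-\tau_g^{-1}\kappa_g$, so $E\le0$ and $n\le6$ make $\tau_g$ a positive subharmonic function in $L^p$, $p>1$, and Yau's Liouville theorem on complete manifolds gives constancy. You are also right that $\operatorname{Ric}_g\ge0$ plays no role in this half; the paper uses it only for the Omori--Yau step in (2).

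For Assertion (2), your write-up is, by your own admission, not a proof: after the (correct) reduction to $\Delta_gv=-\tfrac{n(n-2)}{4}\,c\,v^{(n+2)/(n-2)}$ with $v=\tau_g^{(n-2)/4}$ and the Omori--Yau step, you only obtain $c\,(\inf_N v)^{(n+2)/(n-2)}=0$, and you cannot exclude the case $c>0$ with $\inf_N v=0$. You should know that the paper's own proof does not supply the missing idea. It applies Omori--Yau to $\tau_g$ itself, evaluates $nc=-\tau_g^{-2}\Delta_g\tau_g-\tfrac14(n-6)\,\tau_g^{-3}\|\grad_g\tau_g\|_g^2$ along the sequence $p_k$, and asserts that the right-hand side has limit $\le0$, whence $c=0$ (after which the PDE reduces to the subharmonic case of (1)). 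That assertion tacitly assumes the factors $\tau_g^{-2}(p_k)$ and $\tau_g^{-3}(p_k)$ remain bounded, i.e.\ that $\inf_N\tau_g>0$; if $\inf_N\tau_g=0$ these factors blow up against the decaying quantities $\Delta_g\tau_g(p_k)$ and $\|\grad_g\tau_g\|_g^2(p_k)$, the individual terms need not converge, and no sign conclusion follows. So the paper's proof breaks at precisely the point where you got stuck.

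Moreover, the obstruction you flagged is real, so the Stokes/Gaffney-type vanishing you hoped for cannot exist. Take $N=\mathbb{R}^n$ with $g=\delta$ flat, let $g_1:=\tfrac{4}{(1+|x|^2)^2}\,\delta$ be the stereographic pull-back of the unit round sphere metric, and let $\nabla:=\nabla^{g_1}$. Then $(N,g,\nabla)$ is a (trivial) Weyl manifold with exact $\phi$, $\operatorname{Ric}(\nabla)=(n-1)g_1$, and $\tau_g=\tfrac{4n(n-1)}{(1+|x|^2)^2}>0$. Here $(N,g)$ is complete with $\operatorname{Ric}_g=0$; $\tau_g$ decays like $|x|^{-4}$, hence lies in $L^p$ for every $p>\max(1,n/4)$, and such $p>1$ exist since $n\le6$; and $\tilde g=\tau_g\,g=n(n-1)g_1$ has constant Riemannian scalar curvature, so $\kappa_{\tilde g}-\tau_g^{-1}\kappa_g=\tfrac{1}{n(n-1)}=:c>0$, while $\tau_g$ is not constant. (In your variables this is exactly an Aubin--Talenti solution of the Yamabe equation on flat $\mathbb{R}^n$, and it does lie in $L^{p'}$.) Thus Assertion (2) is false for $c>0$ as stated --- it is fine for $c=0$, where both your argument and the paper's collapse onto Assertion (1) --- and the ``delicate point'' you identified is not a gap you could have closed but an error in the paper's statement and proof; any repair requires an additional hypothesis such as $\inf_N\tau_g>0$.
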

\begin{proof} Suppose that $\kappa_{\tilde g}-\tau_g^{-1}\kappa_g\le0$. Since $(n-6)\le0$, it then follows from Proposition
\ref{prop-3.8} that $\Delta_g\tau_g\ge0$. Since $\tau_g$ is in $L^p$ for some $p>1$, the results of Yau cited above then show that
$\tau_g$ is constant thus verifying Assertion (1).

 To prove
Assertion (2), we apply the Omori-Yau maximum principle to see that there exists a sequence
 of points of the manifold such that $$\lim_k \tau_g(p_k) = \inf \tau_g, \quad  \lim_k
\|\grad_g\tau_g\|_g^2(p_k) = 0, 
 \quad \text{and}  \quad  \lim_k(\Delta_g\,\tau_g)(p_k) \ge 0\,.
$$ This gives:
\begin{eqnarray*}
&& 0 \le  nc = n\lim_k (\kappa_{\tilde g} - \tau_g^{-1}\kappa_g)(p_k)\\
& =& -\tau_g^{-2}\,
\Delta_g\tau_g +
\tfrac{1}{4}(n-6)\tau_g^{-3}\,\|\grad_g\tau_g\|_g^2 \} \le 0, 
\end{eqnarray*} thus $c = 0$ and $\kappa_{\tilde g} = \tau_g^{-1}\kappa_g$. The PDE
reduces to
$$\Delta_g \tau_g = - \tau_g^{-1}\tfrac{1}{4}\,(n-6)\cdot  \|\grad_g\tau_g\|_g^2 \ge 0\,.$$
Now we apply  Yau as above.                                                     \end{proof}
\subsection{Trivial Weyl manifolds}
We have the following useful result that characterizes {\it trivial} Weyl manifolds:
\begin{theorem}\label{thm-3.11}
Let $\mathcal{W}=(M,g,\nabla)$ be a Weyl manifold with $H^1(M;\mathbb{R})=0$. The following assertions are equivalent and if any is
satisfied, we say that $\mathcal{W}$ is {\rm trivial}.
\begin{enumerate}
\item $d\phi=0$.
\item $\nabla=\nabla^{g_1}$ for some $g_1$ in the conformal class defined by $g$.
\item $\nabla=\nabla^{g_1}$ for some semi-Riemannian metric $g_1$.
\item $R_P(\nabla)\in\mathfrak{A}$ for every $P\in M$.
\end{enumerate}
\end{theorem}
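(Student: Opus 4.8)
The plan is to establish the cyclic chain $(1)\Rightarrow(2)\Rightarrow(3)\Rightarrow(4)\Rightarrow(1)$, which yields all the asserted equivalences at once. The topological hypothesis $H^1(M;\mathbb{R})=0$ enters only in the first implication; the remaining three are local, indeed pointwise and algebraic, in nature. For $(1)\Rightarrow(2)$ I would argue as follows. Assume $d\phi=0$. Since $H^1(M;\mathbb{R})=0$, every closed $1$-form is exact, so $\phi=df$ for some $f\in C^\infty(M)$. Put $g_1:=e^{2f}g$. By the gauge transformation rule recalled in Section \ref{sect-1}, the triple $(M,g_1,\nabla)$ is again a Weyl manifold, now with associated $1$-form $\phi_1=\phi-df=0$. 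Hence $\nabla g_1=-2\phi_1\otimes g_1=0$; as $\nabla$ is torsion free, the uniqueness of the Levi-Civita connection forces $\nabla=\nabla^{g_1}$, which is $(2)$. This reproves globally the statement recalled in the Introduction, the vanishing of $[\phi]$ being automatic under our hypothesis.

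The implication $(2)\Rightarrow(3)$ is immediate: a metric $g_1$ in the conformal class of $g$ is in particular a semi-Riemannian metric, so $(2)$ is a special case of $(3)$. For $(3)\Rightarrow(4)$ suppose $\nabla=\nabla^{g_1}$ for some semi-Riemannian metric $g_1$, \emph{not} assumed conformal to $g$. The curvature operator $\mathcal{R}=\mathcal{R}(\nabla)$ is then the Riemann curvature operator of $g_1$, and, crucially, $\operatorname{Ric}=\operatorname{Ric}(\mathcal{R})$ is defined purely as a trace of $\mathcal{R}$, hence is a gauge invariant independent of any choice of metric. Since the Ricci tensor of a Levi-Civita connection is symmetric (equivalently, by the Remark following Theorem \ref{thm-1.1} the curvature of $\nabla^{g_1}$ satisfies Equation (\ref{eqn-1.f}) relative to $g_1$, which via Equation (\ref{eqn-1.e}) applied to $g_1$ forces $\Lambda\operatorname{Ric}=0$), we obtain $\Lambda\operatorname{Ric}=0$. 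Feeding this into Equation (\ref{eqn-1.e}) for the original metric $g$ gives $R(x,y,z,w)+R(x,y,w,z)=0$, i.e. Equation (\ref{eqn-1.f}) holds for $R$, so $R_P(\nabla)\in\mathfrak{A}$ at every $P\in M$. No conformality between $g$ and $g_1$ is required here.

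For $(4)\Rightarrow(1)$: if $R_P(\nabla)\in\mathfrak{A}$ for all $P$, then $R$ satisfies Equation (\ref{eqn-1.f}), and comparison with Equation (\ref{eqn-1.e}) (or Lemma \ref{lem-2.22}) yields $\Lambda\operatorname{Ric}=0$ throughout $M$. It remains to convert this into $d\phi=0$. The key input is the standard Weyl identity expressing the antisymmetric Ricci tensor through the Faraday form, namely that $\Lambda\operatorname{Ric}$ is a nonzero constant multiple of $d\phi$; in the present conventions $\Lambda\operatorname{Ric}=\tfrac{n}{2}\,d\phi$, so that $F=-\tfrac{2}{n}\Lambda\operatorname{Ric}=-d\phi$ up to the sign convention chosen for $d\phi$. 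Because $n\ge 3$ the constant is nonzero, and therefore $\Lambda\operatorname{Ric}=0\iff d\phi=0$, closing the chain.

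The main obstacle is precisely this last identity, which carries the real geometric content of the theorem and is not formally recorded in the excerpt. I would derive it by writing $\nabla=\nabla^g+C$ with $C(x,y)=\phi(x)y+\phi(y)x-g(x,y)\phi^{\sharp}$ as in Theorem \ref{thm-3.1}, and then computing the trace $\operatorname{tr}\mathcal{R}(x,y)$ of the curvature endomorphism in two ways. On one hand the Bianchi identity (\ref{eqn-1.c}) gives $\operatorname{Ric}(x,y)-\operatorname{Ric}(y,x)=\mp\operatorname{tr}\mathcal{R}(x,y)$; on the other hand $\nabla g=-2\phi\otimes g$ forces the induced connection on the volume line bundle to have connection form a multiple of $\phi$ (covariant derivative $-n\phi$ on $\operatorname{vol}_g$), so its curvature is $\operatorname{tr}\mathcal{R}=\pm n\,d\phi$. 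Combining the two gives $2\,\Lambda\operatorname{Ric}=n\,d\phi$. All signs here are convention-dependent, but the only fact used in the proof is that the proportionality constant is nonzero, which is guaranteed by $n\ge3$. Everything else in the argument is either immediate or a direct application of results already established in the excerpt.
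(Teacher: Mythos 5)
Your proof is correct, and its skeleton --- the cyclic chain $(1)\Rightarrow(2)\Rightarrow(3)\Rightarrow(4)\Rightarrow(1)$, with $H^1(M;\mathbb{R})=0$ used only for $(1)\Rightarrow(2)$ --- is exactly the paper's. The two places where you genuinely diverge are improvements in self-containedness. First, in $(3)\Rightarrow(4)$ the paper says only ``the curvature tensor of the Levi-Civita connection is algebraic''; since assertion (4) concerns the tensor $R$ obtained by lowering an index with $g$, whereas $\nabla=\nabla^{g_1}$ directly yields algebraicity only of the $g_1$-lowered tensor, your detour --- symmetry of $\operatorname{Ric}(\nabla^{g_1})$ is a metric-free statement, and Equation (\ref{eqn-1.e}) for the Weyl manifold $(M,g,\nabla)$ then converts $\Lambda\operatorname{Ric}=0$ into Equation (\ref{eqn-1.f}) for $R$ --- fills in precisely what the paper leaves implicit. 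Second, for $(4)\Rightarrow(1)$ the paper does not prove the identity $d\phi=-\tfrac1n\Lambda\operatorname{Ric}$; it imports it as Theorem 6 of \cite{GNU10}. You instead derive the proportionality $\Lambda\operatorname{Ric}\sim d\phi$ from scratch, and your derivation is sound: the first Bianchi identity gives $\operatorname{Ric}(x,y)-\operatorname{Ric}(y,x)=-\operatorname{Tr}\{z\to\mathcal{R}(x,y)z\}$, while writing $\nabla$ as in Theorem \ref{thm-3.1} gives $\Gamma_{ik}{}^k=(\Gamma^g)_{ik}{}^k+n\phi_i$, hence $\nabla\omega_g=-n\,\phi\otimes\omega_g$ and $\operatorname{Tr}\mathcal{R}=\pm n\,d\phi$ as the curvature of the volume line bundle; combining the two traces gives the identity. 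Since only the nonvanishing of the proportionality constant is needed (guaranteed by $n\ge3$), your hedging on signs is harmless --- indeed the paper itself is not internally consistent on this constant (compare $d\phi=-\tfrac1n\Lambda\operatorname{Ric}$ in the proof of the theorem with $F=-\tfrac2n\Lambda\operatorname{Ric}=-2\,d\phi$ in Section \ref{sect-4}). What your version buys is a proof in which the one identity carrying the real geometric content is established rather than cited; what the paper's version buys is brevity and alignment with the conventions of \cite{GNU10}, where the exact constant is pinned down and reused elsewhere.
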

\begin{proof} Suppose that $d\phi=0$. Since $H^1(M;\mathbb{R})=0$, we can express $\phi=df$ for some function $f$.
Then $\nabla$ is the Levi-Civita connection for the conformally equivalent metric $g_1:=e^{2f}g$. Thus Assertion (1)
implies Assertion (2). Clearly Assertion (2) implies Assertion (3). Since the curvature tensor of the Levi-Civita connection
is algebraic, Assertion (3) implies Assertion (4). Suppose that Assertion (4) holds. We apply Theorem 6 of \cite{GNU10} to
see $d\phi=-\frac1n\Lambda\operatorname{Ric}$. Since the curvature tensor is algebraic, $\Lambda\operatorname{Ric}=0$.
Thus Assertion (4) implies Assertion (1).
\end{proof}
In view of Theorem \ref{thm-3.11}, Proposition \ref{prop-1.3} and Lemma \ref{lem-2.22} offer simple tools
to verify, in terms of curvature decompositions, whether a
Weyl manifold  is trivial.
The following Theorem characterizes trivial Weyl manifolds;
for this we compare 
 gauge invariant global scalar curvatures.
\begin{theorem}
Let $\mathcal{W}$  be a compact Weyl manifold  without boundary,
with positive definite
Weyl metric  $\tilde g$
and associated Riemannian volume form $\omega_{\tilde g} $. Then the total volume of $(N,\tilde g)$ satisfies 
$$\int_N  \omega_{\tilde g}  \le n(n-1)\int_N \kappa_{\tilde g}\, \omega_{\tilde g} \,.$$
Equality holds if and only if $\nabla$ is the Levi-Civita connection of $\tilde g$. 
\end{theorem}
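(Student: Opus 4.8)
The statement to prove relates the total volume of $(N,\tilde g)$ to its total scalar curvature, with equality characterizing triviality. The key tool is Assertion (2) of Proposition \ref{prop-3.8}, which for the Weyl metric $\tilde g$ (where $\tau_{\tilde g}=1$) reads
$$1 = n(n-1)\kappa_{\tilde g} - 2(n-1)\nabla^{\tilde g}_k\tilde\phi^k -(n-1)(n-2)\|\tilde\phi\|_{\tilde g}^2\,.$$
The plan is to integrate this pointwise identity against the Riemannian measure $\omega_{\tilde g}$ over the compact manifold $N$. The divergence term $\nabla^{\tilde g}_k\tilde\phi^k$ integrates to zero by the divergence theorem (since $N$ is compact without boundary), leaving an inequality once we discard the manifestly nonpositive term $-(n-1)(n-2)\|\tilde\phi\|_{\tilde g}^2$.

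\begin{proof}
We integrate Assertion (2) of Proposition \ref{prop-3.8} over $N$ against $\omega_{\tilde g}$. Since $N$ is compact without boundary, the divergence theorem gives
$$\int_N \nabla^{\tilde g}_k\tilde\phi^k\;\omega_{\tilde g}=0\,.$$
Hence
$$\int_N\omega_{\tilde g}=n(n-1)\int_N\kappa_{\tilde g}\,\omega_{\tilde g}-(n-1)(n-2)\int_N\|\tilde\phi\|_{\tilde g}^2\,\omega_{\tilde g}\,.$$
Because $n\ge3$ we have $(n-1)(n-2)\ge0$, and since $\tilde g$ is positive definite the integrand $\|\tilde\phi\|_{\tilde g}^2$ is nonnegative; therefore the last term is nonpositive and
$$\int_N\omega_{\tilde g}\le n(n-1)\int_N\kappa_{\tilde g}\,\omega_{\tilde g}\,,$$
which is the asserted inequality. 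Equality holds if and only if $\int_N\|\tilde\phi\|_{\tilde g}^2\,\omega_{\tilde g}=0$, i.e. if and only if $\tilde\phi\equiv0$. By Theorem \ref{thm-3.1} the condition $\tilde\phi=0$ is equivalent to $\nabla\tilde g=0$, which together with the torsion-freeness of $\nabla$ characterizes $\nabla$ as the Levi-Civita connection of $\tilde g$. This completes the proof.
\end{proof}

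**Main obstacle.** The only genuinely subtle point is the equality case: one must argue that the $L^2$-vanishing of $\tilde\phi$ forces $\tilde\phi\equiv 0$ pointwise (which uses positive-definiteness of $\tilde g$, continuity of $\tilde\phi$, and compactness), and then translate $\tilde\phi=0$ into the statement that $\nabla$ is the Levi-Civita connection of $\tilde g$. The translation is immediate from Theorem \ref{thm-3.1}, since $\tilde\phi=0$ gives $\nabla_x y=\nabla_x^{\tilde g}y$. The remaining steps are routine applications of the divergence theorem and sign considerations, so no serious computational difficulty arises.
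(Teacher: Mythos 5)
Your proof is correct and follows exactly the paper's own (much terser) argument: integrate the second relation of Proposition \ref{prop-3.8} over the compact manifold, kill the divergence term by Stokes' theorem, discard the nonpositive term $-(n-1)(n-2)\|\tilde\phi\|_{\tilde g}^2$, and note that equality forces $\tilde\phi\equiv 0$, hence $\nabla=\nabla^{\tilde g}$. Your write-up simply supplies the details the paper leaves implicit, including the sign discussion for $n\ge 3$ and the appeal to Theorem \ref{thm-3.1} for the equality case.
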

\begin{proof} 
We integrate the second relation in Proposition \ref{prop-3.8}   and apply
Stokes' theorem.  Equality gives $\tilde\phi = 0$ and thus
$\nabla = \nabla^{\tilde g}$.
\end{proof}
\subsection{Einstein-Weyl manifolds}
We recall the following well known definition.
\begin{definition}
\rm A Weyl manifold $(N,g,\nabla)$ is said to be an
{\it Einstein-Weyl manifold} 
if the Ricci tensor $ \operatorname{Ric} = \operatorname{Ric}(\nabla)$ satisfies
\begin{equation}\label{eqn-3.f}
S\operatorname{Ric} = \lambda \cdot g\,.
\end{equation}
\end{definition}
\begin{remark}\rm 
\ \begin{enumerate}\item Obviously the  foregoing  relation is conformally  
invariant,  and  
$$n \cdot \lambda = \tau_g\text{ with }\tau_g= 
\operatorname{Tr}_g\operatorname{Ric}\text{ and }S \operatorname{Ric}
=
\tfrac{1}{n}\; \tau_g \cdot g\,.$$
\item It follows from Theorem \ref{thm-1.1} that 
also $ S \operatorname{Ric}^\star = \tfrac{1}{n}\, \tau_g\cdot g$ \; on 
Einstein-Weyl manifolds.
\item Let the Weyl metric $\tilde g$ be well defined.
Then  $\mathcal{W}$ is Einstein-Weyl if and only if 
$$S \operatorname{Ric} = \tfrac{1}{n} \tau_{\tilde g} \cdot \tilde g =
\tfrac{1}{n} \cdot \tilde g\,.$$
 Thus, in particular, by renormalizing the metric, if $\tau>0$, then we can assume that the Einstein multiple $\lambda$ of
Equation (\ref{eqn-3.f}) satisfies
$\lambda=+1$. Note that in the semi-Riemannian setting, one automatically has $\lambda$ is constant if the dimension is at least 3.
\end{enumerate}\end{remark}
We apply the decomposition results from Section \ref{sect-2.3} and 
immediately get
the following characterizations of Einstein-Weyl manifolds
in terms of components of the decompositions, where
again $R = R(\nabla)$.
\begin{proposition} Let $\mathcal{W}$ be a Weyl manifold. 
Then the following properties are equivalent:
$$\begin{array}{rlrl}
(1)&\mathcal{W}\text{ is Einstein-Weyl.}&
(2)&R = (\alpha_1 + \alpha_6 + \alpha_4  + \alpha_5)(R).\phantom{.......}\\
(3)&R = (\pi_1 +  \pi_3 +   \pi_4 +   \pi_6)(R).&
(4)&\alpha_2(R)= 0.\\
(5)&\pi_2(R)= 0. &
 (6)&\pi_5(R)= 0. \\
 (7)&\operatorname{Ric}(\alpha_2(R)) = 0.&
(8)&\operatorname{Ric}^\star(\alpha_2(R))= 0.\\
(9)&\operatorname{Ric}(\pi_2(R)) = 0.&
(10)&\operatorname{Ric}^\star(\pi_2(R))  = 0. \\
(11)&\operatorname{Ric}^\star(\pi_5(R))= 0.&
(12)&\operatorname{Ric}^\star(\alpha_1(R))= \operatorname{Ric}(R). \\
(13)&  \operatorname{Ric}(\pi_1(R))  = \operatorname{Ric}(R). &
(14)&\operatorname{Ric}^\star(\pi_1(R)) = \operatorname{Ric}(R). \\
(15)&\operatorname{Ric}(\alpha_1(R)) = \operatorname{Ric}(R).\phantom{.......}\end{array}$$
\end{proposition}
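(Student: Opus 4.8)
The plan is to show that each of the fifteen assertions is equivalent to the single condition
$$E:=S\operatorname{Ric}-\tfrac1n\tau_g\,g=0,$$
which I abbreviate $(\ast)$; by the Remark following the definition of an Einstein-Weyl manifold, $(\ast)$ is exactly the Einstein-Weyl condition $S\operatorname{Ric}=\lambda g$ with $n\lambda=\tau_g$, so $(\ast)\Leftrightarrow(1)$. Note that $E$ is the trace-free symmetric part of $\operatorname{Ric}$, i.e. the ``type (3)'' tensor isolated in the Remark preceding Section \ref{sect-2.6}. Two inputs are used throughout. First, for $R\in\mathfrak W$ one has $\alpha_3(R)=\alpha_7(R)=\alpha_8(R)=0$ and $\pi_7(R)=\pi_8(R)=0$ by Lemma \ref{lem-2.9} and the corresponding $W$-decomposition lemma, whence
$$R=(\alpha_1+\alpha_2+\alpha_4+\alpha_5+\alpha_6)(R)=(\pi_1+\pi_2+\pi_3+\pi_4+\pi_5+\pi_6)(R).$$
Second, the two lemmas computing $\operatorname{Ric}$ and $\operatorname{Ric}^\star$ of the projections supply all the numerical data; the remainder of the proof consists of reading off these values.

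For the Ricci-type assertions (7)--(11) I substitute these values: $\operatorname{Ric}(\alpha_2R)=\operatorname{Ric}^\star(\alpha_2R)=E$, $\operatorname{Ric}(\pi_2R)=E$, $\operatorname{Ric}^\star(\pi_2R)=-\tfrac1{n-1}E$, and $\operatorname{Ric}^\star(\pi_5R)=\tfrac n{n-1}E$. Since $n\ge3$, the scalar factors $1,\,-\tfrac1{n-1},\,\tfrac n{n-1}$ are all nonzero, so (7), (8), (9), (10) and (11) are each equivalent to $E=0$, hence to (1).

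Next I reduce the component-vanishing assertions (4), (5), (6) to these. The modules $A_2$ and $W_2$ are irreducible and isomorphic to $S^2_0$, and on them the Ricci contraction is a nonzero, hence (by Schur's lemma and the irreducibility in Theorem \ref{thm-2.2}) injective $\mathcal O$-map; likewise $\operatorname{Ric}^\star$ is injective on $W_5\cong S^2_0$. Equivalently one may argue directly from the explicit formulas, e.g. $\pi_2(R)=-\tfrac1{n-1}E\wedge g$ together with the injectivity of $\theta\mapsto\theta\wedge g$ on symmetric tensors. Hence $\alpha_2(R)=0\Leftrightarrow\operatorname{Ric}(\alpha_2R)=0$, $\pi_2(R)=0\Leftrightarrow\operatorname{Ric}(\pi_2R)=0$, and $\pi_5(R)=0\Leftrightarrow\operatorname{Ric}^\star(\pi_5R)=0$, which settles (4), (5), (6). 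The two global assertions then follow at once from the expansions above: (2) deletes only the $\alpha_2$-term, so (2)$\Leftrightarrow\alpha_2(R)=0\Leftrightarrow$(4); while (3) deletes the $\pi_2$- and $\pi_5$-terms, which lie in the distinct summands $W_2$ and $W_5$, so (3)$\Leftrightarrow[\pi_2(R)=0$ and $\pi_5(R)=0]\Leftrightarrow$(5) and (6).

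Finally, in (12)--(15) each left-hand side equals $\operatorname{Ric}(\alpha_1R)=\operatorname{Ric}^\star(\alpha_1R)=\operatorname{Ric}(\pi_1R)=\operatorname{Ric}^\star(\pi_1R)=\tfrac1n\tau_g\,g$, a symmetric tensor, so each asserts $\operatorname{Ric}(R)=\tfrac1n\tau_g\,g$; comparing symmetric parts gives $S\operatorname{Ric}=\tfrac1n\tau_g\,g$, i.e. $(\ast)$, and conversely $(\ast)$ recovers the symmetric part of the required identity. I expect the main obstacle to be organizational rather than computational: correctly matching each of the fifteen assertions to the invariant $E$, tracking which irreducible summand every projection lands in, and --- the one genuinely delicate point --- reading the equalities in (12)--(15) on the symmetric part of $\operatorname{Ric}$, since $\operatorname{Ric}(R)=S\operatorname{Ric}+\Lambda\operatorname{Ric}$ also carries the antisymmetric piece $\Lambda\operatorname{Ric}=-\tfrac n2F$, which the scalar projection $\alpha_1R$ (resp. $\pi_1R$) does not detect.
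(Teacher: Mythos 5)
Your reduction of assertions (1)--(11) to the vanishing of $E:=S\operatorname{Ric}-\tfrac1n\tau_g\,g$ is correct, and it is in substance exactly the paper's argument: the authors give no details beyond ``apply the decomposition results,'' and the values you quote, $\operatorname{Ric}(\alpha_2R)=\operatorname{Ric}^\star(\alpha_2R)=\operatorname{Ric}(\pi_2R)=E$, $\operatorname{Ric}^\star(\pi_2R)=-\tfrac1{n-1}E$, $\operatorname{Ric}^\star(\pi_5R)=\tfrac n{n-1}E$, agree with Lemma \ref{lem-2.9} and the Ricci-computation lemmas of Sections \ref{sect-2.3}--\ref{sect-2.5}. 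The Schur-type injectivity argument settling (4)--(6), and the direct-sum argument identifying (2) with (4) and (3) with the conjunction of (5) and (6) (using $\alpha_3=\alpha_7=\alpha_8=0$ and $\pi_7=\pi_8=0$ on $\mathfrak W$), are likewise sound.

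The gap is in (12)--(15), and you half-noticed it but did not close it. Each left-hand side there equals $\tfrac1n\tau_g\,g$, a symmetric tensor, so each of (12)--(15) is the full tensor equation $\operatorname{Ric}(R)=\tfrac1n\tau_g\,g$, which splits into $E=0$ \emph{and} $\Lambda\operatorname{Ric}=0$. Your argument proves (12)--(15) $\Rightarrow$ (1), but the converse you offer --- ``$(\ast)$ recovers the symmetric part of the required identity'' --- establishes only the symmetric half; the antisymmetric half $\Lambda\operatorname{Ric}=0$ simply does not follow from (1). Indeed $\Lambda\operatorname{Ric}=n\,d\phi$ (Section \ref{sect-4}), while the Einstein--Weyl condition (\ref{eqn-3.f}) constrains only $S\operatorname{Ric}$. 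The discrepancy is already visible algebraically inside the paper's own framework: for $0\ne\varphi\in\Lambda^2$ the Higa tensor $A=(\sigma_4-\sigma_5)\varphi\in\mathfrak P\subset\mathfrak W$ has $\operatorname{Ric}(A)=-n\varphi$ purely antisymmetric, so $E=0$ yet $\operatorname{Ric}(A)\ne0=\operatorname{Ric}(\alpha_1(A))$; and non-closed ($d\phi\ne0$) Einstein--Weyl manifolds exist, so this is not cured globally. Consequently the equivalence of (12)--(15) with (1) cannot be proved as stated: either the right-hand sides must be read as $S\operatorname{Ric}(R)$ (presumably the authors' intent, and with that reading your argument goes through verbatim), or (12)--(15) characterize Einstein--Weyl manifolds that are in addition trivial in the sense of $\Lambda\operatorname{Ric}=0$. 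Calling this point ``organizational'' leaves the implication (1) $\Rightarrow$ (12) unproved --- and, as stated, false.
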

\begin{remark}\rm 
\ \begin{enumerate}\item The characterization of Einstein-Weyl manifolds
in Assertion $(5)$ generalizes a  well known characterization 
of Einstein spaces within the class of semi-Riemann manifolds,
using the decomposition of algebraic curvature tensors.
\item Trivially any Ricci-flat Weyl structure is Einstein-Weyl.
\end{enumerate}\end{remark}
\subsection{The conformal and the projective structure of  Weyl
manifolds}
Introducing the concept of Weyl geometry, it was Weyl's intention
to relate the conformal class with the projective structure of 
what we call the Weyl connection $\nabla$.
Concerning the decompositions that we studied and the conformal class,
we recall Section \ref{sect-3.2}. For fixed metric $g$ and 
concerning the projective structure,
according to Section \ref{sect-2.6}  the projective curvature tensor appears in the  $W$-decomposition as
$$p(R) = \bigoplus_4^6 \pi_i(R); $$
to this decomposition there 
corresponds    a gauge invariant  decomposition
of the curvature operator $\mathcal{R}.$ 
Following Section \ref{sect-2.2},  the Higa term can be expressed
as $$H(R) = \pi_3(R) + \pi_4(R)\,.$$
These observations  and Proposition \ref{prop-2.17}
finally lead to the following Theorem  which 
in particular generalizes  a classical result, namely:
{\it A projectively flat semi-Riemannian manifold has constant
sectional curvature.}
\begin{theorem} Let $(N,g,\nabla)$ be a Weyl manifold with $H^1(M;\mathbb{R})=0$.
\begin{enumerate}
\item  If $H(\mathcal{R}) = 0$ then the Weyl manifold is trivial\,.
\item If $\nabla$ is 
projectively flat then the 
curvature tensor satisfies $R = \pi_1(R)$,
i.e. $R$ is of constant curvature type;
in particular, $\mathcal{W}$ is Einstein-Weyl. Moreover, $\Lambda{\operatorname{Ric}} = 0$, thus $(N,g,\nabla)$ is trivial.
\end{enumerate}
\end{theorem}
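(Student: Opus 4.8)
The plan is to derive both assertions from the algebraic machinery already in place, so that essentially no new computation is needed: the equivalences of Lemma \ref{lem-2.22}, the rigidity of Proposition \ref{prop-2.17}, and the triviality criterion of Theorem \ref{thm-3.11} do all of the work.

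For Assertion (1), I would first recall that the Higa term is gauge invariant and that, by the formula $H(A)=\pi_3(A)+\pi_4(A)=-\tfrac1n\bigl(2(\Lambda\operatorname{Ric})\cdot\XIx+(\Lambda\operatorname{Ric})\wedge_1\XIx\bigr)$ together with Lemma \ref{lem-2.22}, the vanishing $H(A)=0$ is equivalent pointwise to $\Lambda\operatorname{Ric}=0$, and hence to $A\in\mathfrak{A}$. Thus the hypothesis $H(\mathcal{R})=0$ forces $R_P(\nabla)\in\mathfrak{A}$ for every $P\in M$. This is exactly Assertion (4) of Theorem \ref{thm-3.11}; since $H^1(M;\mathbb{R})=0$ by hypothesis, that theorem yields $d\phi=0$ and $\nabla=\nabla^{g_1}$ for a conformally equivalent $g_1$, i.e. the Weyl manifold is trivial.

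For Assertion (2), projective flatness of $\nabla$ means $p(\mathcal{R})=0$. Working pointwise with $R\in\mathfrak{W}$, I would invoke Proposition \ref{prop-2.17}: $p(R)=0$ forces $R$ to be a constant multiple of $\tau_g\,g\wedge g$, that is, $R=\pi_1(R)$ is of constant-curvature type. To read off the Ricci consequences I would reuse the proof of Proposition \ref{prop-2.17}, where $\pi_4(R)=0$ gives $\Lambda\operatorname{Ric}=0$ and $\pi_5(R)=0$ gives $S\operatorname{Ric}=\tfrac1n\tau_g\,g$. The latter is precisely the Einstein-Weyl condition $S\operatorname{Ric}=\lambda g$ with $\lambda=\tfrac1n\tau_g$, so $\mathcal{W}$ is Einstein-Weyl. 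Finally, from $\Lambda\operatorname{Ric}=0$ Lemma \ref{lem-2.22} again gives $R_P\in\mathfrak{A}$ at every point, and Theorem \ref{thm-3.11} (using $H^1(M;\mathbb{R})=0$ once more) shows that $(N,g,\nabla)$ is trivial.

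The theorem is essentially an assembly, so I do not anticipate a genuine analytic obstacle; the only point requiring care is bookkeeping. In particular one must reconcile the sign and normalization conventions of $\alpha_1$, $\pi_1$, and the constant-curvature tensor appearing in Proposition \ref{prop-2.17} (these orthogonal projections all land in the same one-dimensional module $\mathbb{R}\cdot(g\wedge g)$, so they must agree up to the stated conventions), and one must note that all of the cited algebraic identities hold simultaneously at every $P\in M$, so that the global hypothesis $H^1(M;\mathbb{R})=0$ may legitimately be fed into Theorem \ref{thm-3.11}.
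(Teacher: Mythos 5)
Your proposal is correct and follows exactly the route the paper intends: the paper gives no separate proof but assembles the theorem from the identities $p(R)=\bigoplus_{i=4}^{6}\pi_i(R)$ and $H(R)=\pi_3(R)+\pi_4(R)$, Proposition \ref{prop-2.17} (including the observations $\pi_4=0\Rightarrow\Lambda\operatorname{Ric}=0$ and $\pi_5=0\Rightarrow S\operatorname{Ric}=\tfrac1n\tau_h\XIx$ from its proof), Lemma \ref{lem-2.22}, and the triviality criterion of Theorem \ref{thm-3.11} under $H^1(M;\mathbb{R})=0$. Your bookkeeping caveat about the sign conventions relating $\alpha_1$, $\pi_1$, and the tensor in Proposition \ref{prop-2.17} is apt, as the paper itself is inconsistent on that sign, but this does not affect the argument.
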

\section{\bf Length curvature and directional curvature}\label{sect-4}
Let $\mathcal{W} = (N,g,\nabla)$ be a Weyl-manifold.
We use Theorem \ref{thm-1.1} to see:
$$R(x,y,z,w) + R(x,y,w,z) = 2 F(x,y)\, g(z,w),$$
where the operator  $F$ satisfies $F(x,y) := - \tfrac{2}{n} \, (\Lambda \operatorname{Ric})(x,y) = - 2 \,d \phi (x,y);$ here $d$ denotes exterior derivation.
Parts of the following can be found in Section 1 of \cite{H94}.
\begin{definition}\rm
$F$ is called the  {\it length  curvature},
and
$$\mathcal{K}(x,y)z := \mathcal{R}(x,y)z - F(x,y)z$$
is called the    {\it directional curvature} of $\mathcal{W}$. Perlick \cite{P91}  explains this terminology.
\end{definition}
\begin{lemma}
$F$ and $\mathcal{K}$ are gauge invariant.
\end{lemma}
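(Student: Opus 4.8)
The plan is to exploit the fact that a gauge transformation $(g,\phi)\to(g_1,\phi_1)=(e^{2f}g,\phi-df)$ leaves the connection $\nabla$ untouched; only the metric and the $1$-form change. Since both $F$ and $\mathcal{K}$ are assembled from objects that either depend solely on $\nabla$ or have already been identified as gauge invariants, the two assertions should reduce to short bookkeeping.

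First I would treat the length curvature $F$. Two characterizations are available, namely $F(x,y)=-\tfrac2n(\Lambda\operatorname{Ric})(x,y)$ and $F(x,y)=-2\,d\phi(x,y)$. The quickest route uses the second one: under a gauge transformation one has $\phi_1=\phi-df$, so $d\phi_1=d\phi-d(df)=d\phi$ because $d^2=0$, and hence $F$ is unchanged. As a cross-check, the Introduction records that $\operatorname{Ric}(\mathcal{R})$ does not depend on the metric and is a gauge invariant; consequently its antisymmetrization $\Lambda\operatorname{Ric}$ is gauge invariant, and therefore so is $F=-\tfrac2n\Lambda\operatorname{Ric}$. I would present the $d\phi$ argument as the main line and mention the Ricci argument as confirmation.

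Next I would handle the directional curvature $\mathcal{K}$. The curvature operator $\mathcal{R}(x,y)z=(\nabla_x\nabla_y-\nabla_y\nabla_x-\nabla_{[x,y]})z$ is defined purely in terms of the torsion-free connection $\nabla$, which is fixed by any gauge transformation; hence $\mathcal{R}$ is manifestly gauge invariant. Writing $\mathcal{F}(x,y)z:=F(x,y)z$ as in the Introduction, the identity $\mathcal{K}=\mathcal{R}-\mathcal{F}$ exhibits $\mathcal{K}$ as the difference of two gauge-invariant tensors, so $\mathcal{K}$ is gauge invariant as well.

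There is no genuine obstacle here: once it is observed that $\nabla$, and therefore $\mathcal{R}$, is insensitive to the gauge and that $d\phi$ is invariant under the closed-form shift $\phi\mapsto\phi-df$, both claims follow immediately. The only point requiring a moment's care is to record explicitly that $F(x,y)$ enters $\mathcal{K}$ as a scalar multiple of $z$, so that $\mathcal{K}$ is a genuine $\operatorname{End}(TN)$-valued $2$-form and the subtraction is meaningful.
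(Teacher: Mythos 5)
Your proof is correct and takes essentially the approach the paper intends: the lemma is stated there without proof precisely because, as you observe, a gauge transformation fixes $\nabla$ (hence $\mathcal{R}$ and $\operatorname{Ric}(\mathcal{R})$, whose gauge invariance is recorded in the Introduction), and $d\phi_1=d(\phi-df)=d\phi$, so both $F$ and $\mathcal{K}=\mathcal{R}-\mathcal{F}$ are unchanged.
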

For  $R = R(\nabla) \in \mathfrak{W}$, according to Section \ref{sect-1.4},  we consider its
conjugate curvature tensor $R^\star$. We write $g( \mathcal{K}(x,y)z,w) =:
K(x,y,z,w)$. Recall that the length curvature operator $\mathcal{F}$ was defined by setting $\mathcal{F}(x,y)z:=F(x,y)z$.
Then the foregoing definitions
immediately give:
\begin{lemma}\ 
\begin{enumerate}
\item $R^\star(x,y,w,z) = R(x,y,w,z) - 2 F(x,y)\,g(z,w)$.
\item $\mathcal{R}^\star$ is gauge invariant.
\item $\mathcal{F}= \tfrac{1}{2}\,(\mathcal{R} - \mathcal{R}^\star)$.
\item  $\mathcal{K} = \tfrac{1}{2}\,(\mathcal{R} + \mathcal{R}^\star)$.
\item $g(\mathcal{K}(x,y)z,w) + g(\mathcal{K}(x,y)w,z) = 0.$
\end{enumerate}
\end{lemma}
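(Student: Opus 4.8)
The plan is to verify the five assertions in sequence, each reducing to the single \emph{length--curvature} identity restated at the opening of this section, namely $R(x,y,z,w)+R(x,y,w,z)=2F(x,y)\,g(z,w)$ (this is just Theorem~\ref{thm-1.1} together with $F=-\tfrac{2}{n}\Lambda\operatorname{Ric}$), combined with the operator characterization $g(\mathcal{R}^\star(x,y)z,w)=-R(x,y,w,z)$ and the non-degeneracy of $g$. Assertion~(1) is immediate: the definition of the conjugate tensor gives $R^\star(x,y,w,z)=-R(x,y,z,w)$, and substituting the length--curvature identity in the rearranged form $-R(x,y,z,w)=R(x,y,w,z)-2F(x,y)\,g(z,w)$ yields the claim.

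For assertion~(3) I would pair both operators against an arbitrary $w$. Since $g(\mathcal{R}(x,y)z,w)=R(x,y,z,w)$ and $g(\mathcal{R}^\star(x,y)z,w)=-R(x,y,w,z)$, the length--curvature identity gives
$$g\bigl((\mathcal{R}-\mathcal{R}^\star)(x,y)z,w\bigr)=R(x,y,z,w)+R(x,y,w,z)=2F(x,y)\,g(z,w),$$
and the right-hand side equals $2\,g(\mathcal{F}(x,y)z,w)$ by definition of $\mathcal{F}$. As $w$ is arbitrary and $g$ is non-degenerate, $(\mathcal{R}-\mathcal{R}^\star)(x,y)z=2\,\mathcal{F}(x,y)z$, which is assertion~(3). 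Assertion~(4) then follows purely formally from the definition $\mathcal{K}=\mathcal{R}-\mathcal{F}$: substituting $\mathcal{F}=\tfrac12(\mathcal{R}-\mathcal{R}^\star)$ gives $\mathcal{K}=\mathcal{R}-\tfrac12(\mathcal{R}-\mathcal{R}^\star)=\tfrac12(\mathcal{R}+\mathcal{R}^\star)$.

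With~(3)--(4) in hand, assertion~(2) is a one-line consequence of gauge invariance already recorded here: $\mathcal{R}$ is the curvature operator of $\nabla$ alone, hence unchanged by a gauge transformation (which alters only $g$ and $\phi$), while $F$, and thus $\mathcal{F}$, together with $\mathcal{K}$ were shown gauge invariant in the preceding Lemma; therefore $\mathcal{R}^\star=\mathcal{R}-2\mathcal{F}=2\mathcal{K}-\mathcal{R}$ is gauge invariant. For assertion~(5) I would compute, from~(4), $g(\mathcal{K}(x,y)z,w)=\tfrac12\bigl(R(x,y,z,w)-R(x,y,w,z)\bigr)$, observe that this expression changes sign under interchange of $z$ and $w$, and conclude $g(\mathcal{K}(x,y)z,w)+g(\mathcal{K}(x,y)w,z)=0$; this records that the directional curvature is skew in its last pair, i.e.\ it behaves algebraically like the curvature of a metric connection.

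There is no genuine obstacle here; the lemma is essentially a bookkeeping exercise built on Theorem~\ref{thm-1.1}. The only point requiring care is the index placement in the conjugate tensor---the star swaps the last two arguments and inserts a sign---so I would keep the operator characterization $g(\mathcal{R}^\star(x,y)z,w)=-R(x,y,w,z)$ explicit throughout to avoid sign errors, and I would prove~(3) and~(4) before~(2) so that the chain of dependencies stays non-circular.
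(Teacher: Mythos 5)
Your proof is correct and follows exactly the route the paper intends: the paper states this lemma as following ``immediately'' from the definitions of $R^\star$, $F$, $\mathcal{F}$, $\mathcal{K}$ together with the length--curvature identity $R(x,y,z,w)+R(x,y,w,z)=2F(x,y)\,g(z,w)$ and the gauge invariance recorded in the preceding lemma, and your computations are precisely that bookkeeping, with the signs and the ordering of dependencies (proving (3)--(4) before (2)) handled correctly.
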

\begin{corollary}
Adopt the notation given above. We have the relations:
\medbreak
$g(\mathcal{F}(x,y)z,w) = \tfrac{1}{2}(R\, - \, R^\star)(x,y,z,w)
=\tfrac{1}{2} \left(H(A)(x,y,z,w) +  H(A)(x,y,w,z)\right)$,
\medbreak
$g(\mathcal{K}(x,y)z,w) = \tfrac{1}{2}(R\, + \, R^\star)(x,y,z,w) =$
\medbreak\qquad\quad$=(\alpha_1 +  \alpha_2 + \alpha_6)(R)(x,y,z,w) 
+ \tfrac{1}{2} \left(H(A)(x,y,z,w) -  H(A)(x,y,w,z)\right)$.
\end{corollary}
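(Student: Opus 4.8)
The plan is to obtain both displayed chains by a single substitution, feeding the two expressions for $\mathcal{F}$ and $\mathcal{K}$ recorded in the preceding Lemma into the Higa decomposition of $R$ and into the formula for $R^\star$. No genuinely new computation is needed; the content is entirely in assembling identities already proved in this section.

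First I would pass from operators to tensors by lowering an index. By part (3) of the preceding Lemma, $\mathcal{F}=\tfrac12(\mathcal{R}-\mathcal{R}^\star)$, so that $g(\mathcal{F}(x,y)z,w)=\tfrac12\bigl(g(\mathcal{R}(x,y)z,w)-g(\mathcal{R}^\star(x,y)z,w)\bigr)$. The first term is $R(x,y,z,w)$ by the definition of the curvature tensor; the second is identified with $R^\star(x,y,z,w)$ by matching the index-lowering convention of Section \ref{sect-1.4}, namely $g(\mathcal{R}^\star(x,y)z,w)=-R(x,y,w,z)$, against the definition $R^\star(x,y,z,w)=-R(x,y,w,z)$ of the conjugate curvature tensor. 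This yields $g(\mathcal{F}(x,y)z,w)=\tfrac12(R-R^\star)(x,y,z,w)$, and the same argument using part (4) of the Lemma gives $g(\mathcal{K}(x,y)z,w)=\tfrac12(R+R^\star)(x,y,z,w)$; these are the first equalities in the two chains.

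For the second equalities I would substitute the two structural identities established earlier in this section: the Higa decomposition $R=(\alpha_1+\alpha_2+\alpha_6)(R)(x,y,z,w)+H(A)(x,y,z,w)$ and the expression $R^\star(x,y,z,w)=(\alpha_1+\alpha_2+\alpha_6)(R)(x,y,z,w)-H(A)(x,y,w,z)$ from part (2) of the last Lemma. Forming the difference, the common algebraic part $(\alpha_1+\alpha_2+\alpha_6)(R)$ cancels and the two Higa contributions add, so $\tfrac12(R-R^\star)=\tfrac12\bigl(H(A)(x,y,z,w)+H(A)(x,y,w,z)\bigr)$; forming the sum, the algebraic part doubles while the Higa contributions subtract, giving $\tfrac12(R+R^\star)=(\alpha_1+\alpha_2+\alpha_6)(R)(x,y,z,w)+\tfrac12\bigl(H(A)(x,y,z,w)-H(A)(x,y,w,z)\bigr)$. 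These are exactly the asserted right-hand sides.

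The one point requiring care---and the step I would check most deliberately---is the sign in identifying $g(\mathcal{R}^\star(x,y)z,w)$ with $R^\star(x,y,z,w)$, since $\mathcal{R}^\star$ is defined only through the index-lowering prescription $g(\mathcal{R}^\star(x,y)z,w)=-R(x,y,w,z)$ rather than directly; once this is reconciled with $R^\star(x,y,z,w)=-R(x,y,w,z)$, everything else is formal cancellation. As an internal consistency check one verifies directly from the Proposition giving $H(A)=-\tfrac1n\bigl(2(\Lambda\operatorname{Ric})\cdot g+(\Lambda\operatorname{Ric})\wedge_1 g\bigr)$ that $H(A)(x,y,z,w)+H(A)(x,y,w,z)=2F(x,y)g(z,w)$, so that $g(\mathcal{F}(x,y)z,w)=F(x,y)g(z,w)$, in agreement with $\mathcal{F}(x,y)z=F(x,y)z$.
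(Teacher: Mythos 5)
Your proof is correct and follows the same route the paper intends: lower indices via parts (3) and (4) of the preceding Lemma, then substitute the Higa decomposition $R=(\alpha_1+\alpha_2+\alpha_6)(R)+H$ and the Section \ref{sect-2.7} identity $R^\star(x,y,z,w)=(\alpha_1+\alpha_2+\alpha_6)(R)(x,y,z,w)-H(x,y,w,z)$, after which everything cancels formally. Your sign check on $g(\mathcal{R}^\star(x,y)z,w)=R^\star(x,y,z,w)$ and the consistency verification $H(x,y,z,w)+H(x,y,w,z)=2F(x,y)g(z,w)$ are both accurate and resolve the only delicate point.
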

The relations in (3) and (4)  in the foregoing Lemma  show a ``symmetry'' in the definition
of $\mathcal{K}$ and $\mathcal{F}$ and thus justify consideration of the conjugate curvature operator $\mathcal{R}^\star$
(conjugate curvature tensor $R^\star$, respectively) in Weyl geometry. Moreover, the foregoing Corollary clarifies 
the role of the Higa term for $\mathcal{K}$ and $\mathcal{F}$.
From  Proposition \ref{prop-1.3} recall that, in general,  $\mathcal{K}$ does not satisfy the Bianchi identity,   
 it is not algebraic.
 The following Lemma summarizes simple characterizations of trivial Weyl manifolds.
\begin{lemma}
We have the equivalences:
$$\begin{array}{llllll} 
(1)&F= 0.&(2)&H = 0.&(3)&R= R^\star.\\
(4)&R= K.&(5)&\Lambda\operatorname{Ric}= 0.&(6)&\alpha_4(R)=0.\\ 
(7)&\alpha_5(R)=0.&
(8)&\pi_3(R)=0.&
(9)&\pi_4(R)=0.\\
(10)&\mathcal{R}^\star \text{ satisfies (\ref{eqn-1.c})}.&
(11)& \mathcal{F}\text{ satisfies (\ref{eqn-1.c})}.&
(12)&\mathcal{K}\text{ satisfies  (\ref{eqn-1.c})}.\phantom{...}\\
(13)&R\in\mathfrak{A}.& 
(14)&R^\star\in\mathfrak{A}.\\\end{array}$$
\begin{enumerate}
\item[(15)]
There exists a metric $g_1$ in the conformal
class of $\mathcal{W}$ such that its Levi-Civita connection
$\nabla^{g_1}$ coincides with  the Weyl connection.
\end{enumerate}
\end{lemma}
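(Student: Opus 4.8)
The plan is to treat Lemma~\ref{lem-2.22} as the hub of the argument. Applied to $A=R\in\mathfrak{W}$ (permissible since $n\ge3\ne2$), that Lemma already asserts the mutual equivalence of the seven conditions $\Lambda\operatorname{Ric}=0$, $\alpha_4(R)=0$, $\alpha_5(R)=0$, $H(R)=0$, $\pi_3(R)=0$, $\pi_4(R)=0$, and $R\in\mathfrak{A}$; these are precisely assertions $(5)$, $(6)$, $(7)$, $(2)$, $(8)$, $(9)$ and $(13)$ of the present Lemma. Thus the bulk of the work is done for free, and it remains only to attach each of the remaining conditions $(1)$, $(3)$, $(4)$, $(10)$, $(11)$, $(12)$, $(14)$ and $(15)$ to this common cluster by a short equivalence.

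First I would dispatch the three tensor-identity conditions. Since $F=-\tfrac{2}{n}\Lambda\operatorname{Ric}$ and $n\ge3$, we have $F=0$ if and only if $\Lambda\operatorname{Ric}=0$, giving $(1)\Leftrightarrow(5)$. The relation $\mathcal{F}=\tfrac12(\mathcal{R}-\mathcal{R}^\star)$ from the Section~\ref{sect-4} Lemma shows $R=R^\star\Leftrightarrow\mathcal{F}=0\Leftrightarrow F=0$, i.e. $(3)\Leftrightarrow(1)$; and the defining identity $\mathcal{K}=\mathcal{R}-\mathcal{F}$ gives $R=K\Leftrightarrow\mathcal{F}=0$, i.e. $(4)\Leftrightarrow(1)$.

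Next the four Bianchi/algebraic conditions. Proposition~\ref{prop-1.3}, applied to $R\in\mathfrak{W}$, states that $R\in\mathfrak{A}$, $R^\star\in\mathfrak{A}$, and the property that $R^\star$ satisfy the Bianchi identity are all equivalent; this yields $(13)\Leftrightarrow(14)\Leftrightarrow(10)$ at once. For $(11)$ and $(12)$ I would use that $\mathcal{R}$ itself always satisfies the Bianchi identity (\ref{eqn-1.c}), because $R\in\mathfrak{W}\subset\mathfrak{R}$. Since $\mathcal{F}=\tfrac12(\mathcal{R}-\mathcal{R}^\star)$ and $\mathcal{K}=\tfrac12(\mathcal{R}+\mathcal{R}^\star)$ are fixed linear combinations of $\mathcal{R}$ and $\mathcal{R}^\star$, and (\ref{eqn-1.c}) is a linear condition, $\mathcal{F}$ (respectively $\mathcal{K}$) satisfies (\ref{eqn-1.c}) if and only if $\mathcal{R}^\star$ does; hence $(11)\Leftrightarrow(10)\Leftrightarrow(12)$.

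The delicate point, and the one I expect to be the main obstacle, is the geometric assertion $(15)$. Here I would invoke the identity $F=-2\,d\phi$ recorded in Section~\ref{sect-4}, so that $(1)$, namely $F=0$, is equivalent to $d\phi=0$. Under a gauge transformation $g\mapsto g_1=e^{2f}g$ the associated $1$-form transforms as $\phi\mapsto\phi-df$, and $\nabla=\nabla^{g_1}$ holds exactly when this transformed form vanishes, i.e. when $\phi=df$ is exact; this immediately gives $(15)\Rightarrow d\phi=0\Rightarrow(1)$. The converse passage from $d\phi=0$ back to the existence of such a $g_1$ is the subtle step: $d\phi=0$ only guarantees that $\phi$ is closed, hence locally exact, so the global statement $(15)$ requires the de Rham class $[\phi]$ to vanish (e.g.\ under $H^1(N;\mathbb{R})=0$, as in Theorem~\ref{thm-3.11}); absent such a hypothesis, $(15)$ must be read locally. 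I would therefore close the loop using the correspondence between $d\phi=0$ and the (local or, under the cohomological hypothesis, global) realization of $\nabla$ as a Levi-Civita connection in the conformal class stated in the Introduction, completing the chain of equivalences.
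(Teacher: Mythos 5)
Your proposal is correct and takes essentially the same approach as the paper: the paper states this Lemma without a separate proof, presenting it as a summary obtained exactly as you do, by combining Lemma \ref{lem-2.22}, Proposition \ref{prop-1.3}, the relations $F=-\tfrac{2}{n}\Lambda\operatorname{Ric}=-2\,d\phi$, $\mathcal{F}=\tfrac12(\mathcal{R}-\mathcal{R}^\star)$, $\mathcal{K}=\mathcal{R}-\mathcal{F}$, and the linearity of the Bianchi identity (\ref{eqn-1.c}). Your caveat on assertion (15) --- that passing from $d\phi=0$ to the existence of $g_1$ with $\nabla=\nabla^{g_1}$ needs either a local reading or $H^1(N;\mathbb{R})=0$ --- is precisely the hypothesis the paper imposes in its triviality result, Theorem \ref{thm-3.11}, so your treatment is, if anything, more explicit than the text on this point.
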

With the results from Section 2 in \cite{GNU09} we get:
\begin{corollary} The Ricci tensors $\operatorname{Ric} = \operatorname{Ric}(R)$
and  $\operatorname{Ric}^\star = \operatorname{Ric}^\star(R)  $ satisfy:
$$\begin{array}{llll}
(1)& \operatorname{Ric}(\mathcal{K})=\tfrac12(\operatorname{Ric} +
\operatorname{Ric}^\star).&
(2)&S \operatorname{Ric}(\mathcal{K}) =S \operatorname{Ric}.\\
(3)& \Lambda \operatorname{Ric}(\mathcal{K}) = \tfrac{1}{n}\,(n-2) \Lambda \operatorname{Ric}.\phantom{...............}&
(4)&\operatorname{Ric}(\mathcal{\mathcal{F}}) = \tfrac{1}{n} \Lambda \operatorname{Ric}.\phantom{................}\\
(5)& S \operatorname{Ric}(\mathcal{F})=0.\end{array}$$
\end{corollary}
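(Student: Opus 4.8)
The plan is to reduce all five formulas to linearity of the Ricci contraction, applied to the two splittings $\mathcal{F}=\tfrac12(\mathcal{R}-\mathcal{R}^\star)$ and $\mathcal{K}=\tfrac12(\mathcal{R}+\mathcal{R}^\star)$ recorded in the Lemma above. The only input beyond pure linearity is the relation $\operatorname{Ric}(\mathcal{R}^\star)=\operatorname{Ric}^\star$, which is precisely the identity $\operatorname{Ric}^\star(R)=\operatorname{Ric}(R^\star)$ noted earlier; here one should observe that, even though $\mathcal{R}^\star$ need not be a generalized curvature tensor, the trace defining $\operatorname{Ric}$ is still well defined and additive, so $\operatorname{Ric}$ may legitimately be applied termwise to $\tfrac12(\mathcal{R}\pm\mathcal{R}^\star)$.

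First I would prove Assertion (1): applying the linear Ricci contraction to $\mathcal{K}=\tfrac12(\mathcal{R}+\mathcal{R}^\star)$ and using $\operatorname{Ric}(\mathcal{R})=\operatorname{Ric}$ together with $\operatorname{Ric}(\mathcal{R}^\star)=\operatorname{Ric}^\star$ gives $\operatorname{Ric}(\mathcal{K})=\tfrac12(\operatorname{Ric}+\operatorname{Ric}^\star)$ immediately. Assertions (2) and (3) then drop out by decomposing this equality into its symmetric and antisymmetric parts and substituting the two relations of Theorem \ref{thm-1.1}. The relation $S\operatorname{Ric}^\star=S\operatorname{Ric}$ collapses the symmetric part to $S\operatorname{Ric}(\mathcal{K})=S\operatorname{Ric}$, while $\Lambda\operatorname{Ric}^\star=\tfrac{n-4}{n}\Lambda\operatorname{Ric}$ turns the antisymmetric part into $\Lambda\operatorname{Ric}(\mathcal{K})=\tfrac12\bigl(1+\tfrac{n-4}{n}\bigr)\Lambda\operatorname{Ric}=\tfrac{n-2}{n}\Lambda\operatorname{Ric}$.

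The length-curvature statements (4) and (5) are the mirror image. The same linearity argument gives $\operatorname{Ric}(\mathcal{F})=\tfrac12(\operatorname{Ric}-\operatorname{Ric}^\star)$; now the symmetric parts cancel, which is exactly Assertion (5), and the antisymmetric part acquires the scalar factor $\tfrac12\bigl(1-\tfrac{n-4}{n}\bigr)$, leaving $\operatorname{Ric}(\mathcal{F})$ as a definite multiple of $\Lambda\operatorname{Ric}$. As a cross-check I would use $\mathcal{R}=\mathcal{K}+\mathcal{F}$: since $\Lambda\operatorname{Ric}(\mathcal{R})=\Lambda\operatorname{Ric}$, Assertion (3) forces $\Lambda\operatorname{Ric}(\mathcal{F})=\Lambda\operatorname{Ric}-\tfrac{n-2}{n}\Lambda\operatorname{Ric}$, which pins down the coefficient independently and, together with (5), determines $\operatorname{Ric}(\mathcal{F})$ entirely.

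I expect no conceptual obstacle: the argument is essentially bookkeeping once the two splittings and the identity $\operatorname{Ric}(\mathcal{R}^\star)=\operatorname{Ric}^\star$ are in hand. The only delicate point is the arithmetic of the scalar factors $\tfrac12\bigl(1\pm\tfrac{n-4}{n}\bigr)$ appearing in the antisymmetric parts of (3) and (4), where a sign or factor slip would change the result; the consistency relation $\mathcal{R}=\mathcal{K}+\mathcal{F}$ is a useful guard against such errors. As an alternative to the abstract contraction one could instead contract the explicit tensor expressions for $g(\mathcal{K}(x,y)z,w)$ and $g(\mathcal{F}(x,y)z,w)$ from the preceding Corollary, but the linearity route is shorter and exposes the structure more clearly.
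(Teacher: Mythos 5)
Your route---linearity of the Ricci contraction applied to the splittings $\mathcal{K}=\tfrac12(\mathcal{R}+\mathcal{R}^\star)$ and $\mathcal{F}=\tfrac12(\mathcal{R}-\mathcal{R}^\star)$, together with $\operatorname{Ric}(R^\star)=\operatorname{Ric}^\star(R)$ and the relations $S\operatorname{Ric}^\star=S\operatorname{Ric}$, $\Lambda\operatorname{Ric}^\star=\tfrac{n-4}{n}\Lambda\operatorname{Ric}$ of Theorem \ref{thm-1.1}---is exactly the derivation the paper has in mind (the paper offers no computation of its own, only a pointer to \cite{GNU09} and the preceding Lemma), and it does establish assertions (1), (2), (3) and (5): the coefficient $\tfrac12\bigl(1+\tfrac{n-4}{n}\bigr)=\tfrac{n-2}{n}$ in (3) and the cancellation of the symmetric parts in (5) are correct.

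The gap is assertion (4), and it sits precisely in the arithmetic you declined to finish. Your scalar factor is $\tfrac12\bigl(1-\tfrac{n-4}{n}\bigr)=\tfrac{2}{n}$, so your argument, carried to completion, gives $\operatorname{Ric}(\mathcal{F})=\tfrac{2}{n}\Lambda\operatorname{Ric}$, not $\tfrac{1}{n}\Lambda\operatorname{Ric}$ as printed. Your own cross-check says the same thing: from (3) and $\mathcal{R}=\mathcal{K}+\mathcal{F}$ one gets $\Lambda\operatorname{Ric}(\mathcal{F})=\Lambda\operatorname{Ric}-\tfrac{n-2}{n}\Lambda\operatorname{Ric}=\tfrac{2}{n}\Lambda\operatorname{Ric}$. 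A direct trace within the paper's conventions confirms it: $\operatorname{Ric}(\mathcal{F})(x,y)=\operatorname{Tr}\{z\mapsto F(z,x)y\}=F(y,x)=-F(x,y)=\tfrac{2}{n}(\Lambda\operatorname{Ric})(x,y)$, since $F=-\tfrac{2}{n}\Lambda\operatorname{Ric}$. Hence (4) as stated is inconsistent with (1) and (3) of the very same Corollary; the printed $\tfrac1n$ is evidently a slip (it is the coefficient one would obtain for the operator $(x,y,z)\mapsto d\phi(x,y)\,z$, whereas $\mathcal{F}$ is built from $F=-2\,d\phi$). Writing ``a definite multiple of $\Lambda\operatorname{Ric}$'' and asserting ``no conceptual obstacle'' dodges the one genuine issue in this Corollary: a proof must either produce the coefficient $\tfrac1n$ (which is impossible under the paper's definitions) or state explicitly that (4) holds only after correcting $\tfrac1n$ to $\tfrac2n$. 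As it stands, your proposal proves (1), (2), (3), (5) and, once its own arithmetic is completed, refutes (4) rather than proving it.
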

\begin{lemma} Let $\nabla$ be projectively flat. Then $F = 0.$
\end{lemma}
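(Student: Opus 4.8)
The plan is to reduce the claim to the single algebraic fact that projective flatness forces the antisymmetric part of the Ricci tensor to vanish; once $\Lambda\operatorname{Ric}=0$ is in hand, the conclusion $F=0$ is immediate from the very definition $F=-\tfrac{2}{n}\,\Lambda\operatorname{Ric}$. So essentially all of the work has already been done in the $W$-decomposition and in Lemma \ref{lem-2.22}, and the proof amounts to citing them in the right order.

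First I would unwind the hypothesis. For $n\ge 3$, $\nabla$ being projectively flat means precisely that the projective curvature tensor of $R=R(\nabla)$ vanishes, i.e. $p(R)=0$. Since $R\in\mathfrak{W}$, the Lemma immediately preceding Proposition \ref{prop-2.17} expresses the projective curvature tensor as the orthogonal direct sum $p(R)=\pi_4(R)\oplus\pi_5(R)\oplus\pi_6(R)$. Because these are mutually orthogonal, distinct $\mathcal{O}$-module components, $p(R)=0$ forces each summand to vanish separately; in particular $\pi_4(R)=0$.

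Next I would feed this into the chain of equivalences of Lemma \ref{lem-2.22}. That Lemma is valid for $n\ne 2$, hence applicable here since $n\ge 3$, and it lists $\pi_4(A)=0$ as equivalent to $\Lambda\operatorname{Ric}=0$. Therefore $\Lambda\operatorname{Ric}=0$, and substituting into the definition of the length curvature gives $F(x,y)=-\tfrac{2}{n}(\Lambda\operatorname{Ric})(x,y)=0$, as claimed. As an alternative route to $\Lambda\operatorname{Ric}=0$ one could instead invoke Proposition \ref{prop-2.17} directly: $p(R)=0$ forces $R=-\tfrac{1}{n(n-1)}\tau_g\, g\wedge g$, a tensor of constant-curvature type, which is algebraic and hence has symmetric Ricci tensor, so again $\Lambda\operatorname{Ric}=0$.

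I do not expect any genuine obstacle, since the decomposition machinery carries the entire weight of the argument. The only point requiring care is that the statement carries no topological hypothesis, so I must be careful to claim only the pointwise algebraic identity $F=0$ (equivalently $d\phi=0$, since $F=-2\,d\phi$) and \emph{not} the full triviality of the Weyl manifold; the latter conclusion $\phi=df$ would additionally require $H^1(N;\mathbb{R})=0$ in order to integrate $d\phi=0$, exactly as in Theorem \ref{thm-3.11} and in the projectively-flat case of the preceding Theorem.
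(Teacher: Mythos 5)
Your proof is correct and takes essentially the same route as the paper: the paper's one-line proof asserts that projective flatness implies $\Lambda\operatorname{Ric}=0$ and then concludes $F=0$ from $F=-\tfrac{2}{n}\,\Lambda\operatorname{Ric}$, and the implicit justification of the first step is exactly the $W$-decomposition chain you spell out, $p(R)=0\Rightarrow\pi_4(R)=0\Rightarrow\Lambda\operatorname{Ric}=0$, as in the proof of Proposition \ref{prop-2.17} and Lemma \ref{lem-2.22}. Your added caution that only the pointwise identity $F=0$ follows (full triviality requiring $H^1(N;\mathbb{R})=0$) is likewise consistent with the paper.
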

\begin{proof} Projective flatness implies $\Lambda \operatorname{Ric} = 0,$
thus $F = 0.$
\end{proof}
\begin{proposition} Let $\mathcal{W}$ be an $n$-dimensional  Weyl manifold
such that the length  curvature is non-zero
at least at one point. The following assertions are equivalent:
 $$\begin{array}{llll}
(1)& n = 4.\phantom{.............}\qquad\qquad
&(2)&\Lambda \operatorname{Ric}^\star = 0.\qquad\qquad\qquad\qquad\qquad\qquad\end{array}$$
\end{proposition}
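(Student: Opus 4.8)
The plan is to read the equivalence off directly from the relation $\Lambda\operatorname{Ric}^\star = \tfrac{n-4}{n}\,\Lambda\operatorname{Ric}$ recorded in Theorem \ref{thm-1.1}, once I translate the hypothesis on the length curvature into a statement about $\Lambda\operatorname{Ric}$. First I would recall that by definition $F = -\tfrac{2}{n}\,\Lambda\operatorname{Ric}$, so the assumption that $F$ is nonzero at some point $P\in N$ is exactly the assertion that $(\Lambda\operatorname{Ric})_P\ne 0$. This is the only nontrivial reinterpretation needed; everything afterward is a transcription of Theorem \ref{thm-1.1}.

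For the implication $(1)\Rightarrow(2)$ I would substitute $n=4$ into the Theorem \ref{thm-1.1} identity: the scalar factor $\tfrac{n-4}{n}$ then vanishes, forcing $\Lambda\operatorname{Ric}^\star=0$ identically. Note that this direction does not even invoke the hypothesis on $F$; it holds for any Weyl manifold of dimension four and is already recorded in the Proposition following Theorem \ref{thm-1.1}.

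For the converse $(2)\Rightarrow(1)$ I would start from $\Lambda\operatorname{Ric}^\star=0$ and feed it into the same relation to obtain $\tfrac{n-4}{n}\,\Lambda\operatorname{Ric}=0$ as a tensor field on $N$. Evaluating this identity at the point $P$ supplied by the hypothesis, where $(\Lambda\operatorname{Ric})_P\ne 0$, forces the numerical coefficient $\tfrac{n-4}{n}$ to vanish, and hence $n=4$.

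The only genuinely delicate point, and the place where the nonvanishing of the length curvature is indispensable, is this final evaluation step. Without a point at which $\Lambda\operatorname{Ric}$ is nonzero, the equation $\tfrac{n-4}{n}\,\Lambda\operatorname{Ric}=0$ would hold in every dimension and the equivalence would collapse. I therefore expect no substantive obstacle beyond keeping careful track of where the hypothesis on $F$ enters; the argument is essentially a one-line consequence of Theorem \ref{thm-1.1} together with the identity $F=-\tfrac{2}{n}\,\Lambda\operatorname{Ric}$.
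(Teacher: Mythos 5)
Your proof is correct and is exactly the argument the paper intends (the paper states this Proposition without an explicit proof, but its ingredients are the relation $\Lambda\operatorname{Ric}^\star=\tfrac{n-4}{n}\Lambda\operatorname{Ric}$ from Theorem \ref{thm-1.1} and the definition $F=-\tfrac{2}{n}\Lambda\operatorname{Ric}$ from Section \ref{sect-4}). You also correctly identify that the hypothesis on $F$ is needed only for the implication $(2)\Rightarrow(1)$, which matches the paper's surrounding remarks.
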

\begin{remark}\rm
Let $n\ne4$. Then $\Lambda(\operatorname{Ric})=0$ if and only if $\Lambda(\operatorname{Ric}^\star)=0$.
\end{remark}
\section*{Acknowledgments} 
Research of P. Gilkey partially supported by DFG PI 158/4-6 (Germany) and by project MTM2009-07756 (Spain). Research of
S. Nik\v cevi\'c partially supported by a research grant of the TU Berlin, by project MTM2009-07756 (Spain), and by 144032
(Serbia). Research of U. Simon partially supported by DFG PI 158/4-6 (Germany). We thank V. Perlick for hints.

\end{document}